\title{Entropy and Its Variational Principle
       \\ for Locally Compact Metrizable Systems}
\author
{
  André Caldas%
  \footnote{
    Departamento de Matemática - Universidade de Brasília-DF, Brasil.
    Supported by CNPq grant no. 140888/11-0.
  }
  \and
  Mauro Patrão%
  \footnote{
    Departamento de Matemática - Universidade de Brasília-DF, Brasil.
    Supported by CNPq grant no. 310790/09-3.
  }
}
\begin{document}
  \maketitle

  {%-*- ispell-dictionary: "english" -*-

  {%-*- ispell-dictionary: "english" -*-

\begin{abstract}
  For a given topological dynamical system
  $\dynamicalsystem{X}{T}$
  over a compact set $X$ with a metric $d$,
  the \emph{variational principle} states that
  \begin{equation*}
    \ksentropy{T}
    =
    \topologicalentropy{T}
    =
    \bowenentropy{d}{T},
  \end{equation*}
  where
  $\measureentropy{\mu}{T}$ is the Kolmogorov-Sinai entropy,
  with the supremum taken over every $T$-invariant probability measure,
  $\bowenentropy{d}{T}$ is the Bowen entropy,
  and
  $\topologicalentropy{T}$ is the topological entropy as defined by
  Adler, Konheim and McAndrew.
  In \cite{patrao:entropia},
  the concept of topological entropy
  was adapted for the case where $T$ is a proper map
  and $X$ is locally compact separable and metrizable,
  and the variational principle was extended to
  \begin{equation*}
    \ksentropy{T}
    =
    \topologicalentropy{T}
    =
    \min_d
    \bowenentropy{d}{T},
  \end{equation*}
  where the minimum is taken over every distance compatible with the topology of $X$.
  In the present work,
  we dropped the properness assumption,
  extending the above result for any continuous map $T$.

  We also apply our results to extend some previous formulas for the
  topological entropy of continuous endomorphisms of connected Lie
  groups proved in \cite{patrao_caldas:endomorfismos}.
  In particular,
  we prove that any linear transformation $\function{T}{V}{V}$
  over a finite dimensional vector space $V$
  has null topological entropy.
\end{abstract}
}
  {%-*- ispell-dictionary: "english" -*-

\section{Introduction}

  In this paper, we extend to continuous maps
  $\dynamicalsystem{X}{T}$
  defined over a metrizable locally compact separable space $X$
  the \emph{variational principle} for entropies.
  We have adapted the classical proof of variational principle due to
  Misiurewicz (see Theorem 8.6 of \cite{walters}).

  The original \emph{topological entropy} was defined by
  Adler, Konheim and McAndrew (see \cite{akm:entropia}),
  by adapting the measure theoretic definition,
  which we call \emph{Kolmogorov-Sinai entropy}.
  In this work,
  this will be called
  the \emph{AKM entropy}.
  By assuming the underlying space is equipped with a distance function,
  Bowen and Dinaburg
  (see \cite{bowen:entropia} and \cite{dinaburg})
  have defined a different concept of entropy,
  which we shall call \emph{Bowen entropy}.
  In the case of a compact metrizable space,
  this concept of entropy coincides with the
  \emph{AKM entropy}.
  In particular,
  in the compact case,
  the value of the \emph{Bowen entropy}
  does not depend on the particularly chosen distance.
  That is,
  for a given topological dynamical system
  $\dynamicalsystem{X}{T}$
  over a compact metric space $(X, d)$,
  Dinaburg and Bowen showed that
  \begin{equation*}
    \ksentropy{T}
    =
    \topologicalentropy{T}
    =
    \bowenentropy{d}{T},
  \end{equation*}
  where the supremum is taken over every $T$-invariant probability measure.
  Probably for this reason,
  Bowen himself also named his entropy
  \emph{``topological entropy''}.
  However,
  for non compact spaces,
  the Bowen concept of entropy gives different values
  for equivalent distances over the same space,
  while the \emph{AKM entropy} would always be infinite.
  Later,
  Patrão
  realized that,
  even though the value of the \emph{Bowen entropy}
  depends on the particular chosen distance function,
  by adapting the \emph{AKM entropy},
  to what we are calling \emph{topological entropy} in this paper,
  a variational principle
  relating the measure theoretic \emph{Kolmogorov-Sinai entropy},
  the \emph{topological entropy}
  and
  the \emph{Bowen entropy}
  could be demonstrated,
  as long as the dynamical system $T$ could be extended
  to a metrizable one point compactification of $X$
  (see \cite{patrao:entropia}).

  Inspired by the work of
  Adler, Konheim and McAndrew in \cite{akm:entropia}
  and the work of Patrão in \cite{patrao:entropia},
  we extend the definition of topological entropy for arbitrary topological spaces.
  Inspired by the work of Bowen in \cite{bowen:entropia},
  we simplify his definition of entropy $\bowenentropy{d}{T}$ for non compact sets.
  We call it \emph{$d$-entropy},
  and denote it with superscript: $\dentropy{d}{T}$.
  Then we show
  for the case where $X$ is locally compact metrizable and separable,
  that
  \begin{equation*}
    \ksentropy{T}
    =
    \topologicalentropy{T}
    =
    \min_d
    \bowenentropy{d}{T}
    =
    \min_d
    \dentropy{d}{T},
  \end{equation*}
  where the minimum is attained for every metric $d$
  that can be extended to the one point compactification of $X$
  (see Theorem \ref{th:principio_variacional}).
  Notice that while the metric $d$ might come from the one point compactification,
  this does not mean that the dynamical system $T$ itself
  needs to have an extension to the one point compactification.
  In fact, $T$ can be any continuous system
  over a locally compact separable metrizable space $X$.
  This is a substantial improvement compared to Patrão's preview result
  in \cite{patrao:entropia}.
  In achieving such a result,
  it was fundamental that we were able to restate the
  different definitions of entropy using a unified approach
  that allowed easier comparisons between them.

  We also apply our results to extend some previous formulas for the
  topological entropy of continuous endomorphisms of connected Lie
  groups proved in \cite{patrao_caldas:endomorfismos}.
  In the case of a connected semisimple Lie group,
  without assuming the endomorphism to be surjective,
  we prove that its topological entropy always vanishes.
  In the case of a connected compact Lie group,
  a linear connected reductive Lie group
  or a connected nilpotent Lie group,
  without assuming the endomorphism to be surjective,
  we prove that the topological entropy
  coincides with the topological entropy of the endomorphism's restriction to the
  maximal connected and compact subgroup of the center.
  In particular,
  Proposition \ref{proposition:nilpotent:toral_component}
  implies that any linear transformation $\function{T}{V}{V}$
  over a finite dimensional vector space $V$
  has null topological entropy.
  This extends the result of
  Proposition 4.2 in \cite{patrao:entropia}.

  The paper is organized in the following way.
  In Section \ref{sec:preliminaries},
  we recall some elementary definitions
  related to the different types of entropies,
  extend those definitions to a broader class of dynamical systems,
  and prove some fundamental facts which are used in the sequel.
  In Section \ref{sec:principle}, we prove our main result.
  And in Section \ref{sec:application},
  we apply the main result in order to determine
  the topological entropy for endomorphisms of
  some classes of Lie groups.
}
  {%-*- ispell-dictionary: "english" -*-

\section{Preliminaries}
  \label{sec:preliminaries}

  This section is devoted to recalling some elementary definitions
  related to different types of entropy, and to proving some
  fundamental facts which are used in the sequel.
  We also extend the \emph{topological entropy}
  ---
  originally defined for compact systems
  ---
  to an arbitrary topological dynamical system.

  {%-*- ispell-dictionary: "english" -*-

  A \emph{topological dynamical system}
  ---
  or simply a \emph{dynamical system}
  ---
  $\dynamicalsystem{X}{T}$ is a continuous map $T$
  defined over a topological space $X$.
  A \emph{measurable dynamical system}
  $\dynamicalsystem{X}{T}$ is a measurable map $T$
  defined over a measurable space $X$.
  If we embed $X$ with the Borel $\sigma$-álgebra,
  a \emph{topological dynamical system}
  becomes also a \emph{measurable dynamical system}.

  Recall that a family $\family{A}$ of
  subsets of $X$ is a \emph{cover of $X$},
  or simply a \emph{cover}, when
  \begin{equation*}
    X
    =
    \bigcup_{A \in \family{A}} A.
  \end{equation*}
  If the sets in $\family{A}$ are disjoint, then we say that
  $\family{A}$ is a \emph{partition of $X$}.
  A \emph{subcover} of $\family{A}$ is a family
  $\family{B} \subset \family{A}$ which is itself a cover of $X$.
  If $Y \subset X$ and $\family{A}$ is a cover of $X$, then we
  denote by $Y \cap \family{A}$ the cover of $Y$ given by
  \begin{equation*}
    Y \cap \family{A}
    =
    \setsuchthat{A \cap Y}{A \in \family{A}}.
  \end{equation*}
  We denote by $\covercardinality{\family{A}}$
  the least cardinality amongst the subcovers of $\family{A}$.
  For $Y \subset X$,
  $\covercardinality[Y]{\family{A}}$ is a shorthand for
  $\covercardinality{Y \cap \family{A}}$.

  Given two covers $\family{A}$ and $\family{B}$ of an arbitrary
  set $X$, we say that $\family{A}$
  is \emph{finer} then $\family{B}$ or that
  $\family{A}$ \emph{refines} $\family{B}$
  --- and write $\family{B} \prec \family{A}$ ---
  when every element of $\family{A}$
  is a subset of some element of $\family{B}$.
  We also say that $\family{B}$ is \emph{coarser} then $\family{A}$.
  The relation $\prec$ is a \emph{preorder}, and if we identify the
  \emph{symmetric} covers
  (i.e.: covers $\family{A}$ and $\family{B}$
  such that $\family{A} \prec \family{B}$
  and $\family{B} \prec \family{A}$),
  we have a \emph{lattice}.
  As usual,
  $\family{A} \vee \family{B}$ denotes the representative of the coarsest
  covers of $X$ that refines both $\family{A}$ and $\family{B}$, given by
  \begin{equation*}
    \family{A} \vee \family{B}
    =
    \setsuchthat{A \cap B}{A \in \family{A}, B \in \family{B}, A \cap B \neq \emptyset}.
  \end{equation*}

  Given a dynamical system $\dynamicalsystem{X}{T}$
  and a cover $\family{A}$,
  for each $n \in \naturals$ we define
  \begin{equation*}
    \family{A}^n
    =
    \family{A}
    \vee
    T^{-1}(\family{A})
    \vee \dotsb \vee
    T^{-(n-1)}(\family{A})
  \end{equation*}
  If we want to emphasise the dynamical system $T$,
  we write $\family{A}_T^n$ instead.
}
  {\subsection{Compactification}

  The variational principle has been demonstrated for compact dynamical systems.
  For our extended version,
  we shall treat the dynamical system
  $\dynamicalsystem{X}{T}$ as a subsystem of a compact metrizable one
  (Definition \ref{def:subsistema}).

  For a topological space $X$ to be contained as a subspace
  in its one point compactification $X \cup \set{\infty}$,
  and in order for this one point compactification to be metrizable,
  it is necessary and sufficient that
  $X$ is a metrizable locally compact separable space.

  \begin{definition}[Subsystem]
    \label{def:subsistema}
    We say that a (topological) dynamical system
    $\dynamicalsystem{X}{T}$
    is a \emph{(topological) subsystem} of
    $\dynamicalsystem{Z}{S}$
    when $X \subset Z$ has the induced topology and
    $T(x) = S(x)$ for every $x \in X$.
    We also say that
    $S$ \emph{extends} $T$ to $Z$.
    If
    $\dynamicalsystem{X}{T}$
    and
    $\dynamicalsystem{Z}{S}$
    are measurable dynamical systems instead
    and $X$ is a measurable subset of $Z$,
    we say that $T$ is a \emph{measurable subsystem} of $S$.
  \end{definition}

  \begin{lemma}
    \label{lemma:partition_restriction}
    Suppose $\dynamicalsystem{X}{T}$ is a measurable subsystem of
    $\dynamicalsystem{Z}{S}$.
    If $\family{Z}$ is a covering of $Z$ and
    $\family{C} = X \cap \family{Z}$,
    then
    \begin{equation*}
      \family{C}_T^n
      =
      X
      \cap
      \family{Z}_S^n.
    \end{equation*}
  \end{lemma}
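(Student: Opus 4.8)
The plan is to reduce the claimed identity to two elementary commutation properties of the restriction operation $\family{W} \mapsto X \cap \family{W}$, and then to conclude by induction on $n$. Recall that $\family{C}_T^n = \family{C} \vee T^{-1}(\family{C}) \vee \dotsb \vee T^{-(n-1)}(\family{C})$ and $\family{Z}_S^n = \family{Z} \vee S^{-1}(\family{Z}) \vee \dotsb \vee S^{-(n-1)}(\family{Z})$. The two facts I would isolate are: \emph{(a)} that taking preimages commutes with restriction to $X$, namely $T^{-k}(X \cap \family{Z}) = X \cap S^{-k}(\family{Z})$ for every $k \geq 0$; and \emph{(b)} that the join commutes with restriction to $X$, namely $(X \cap \family{A}) \vee (X \cap \family{B}) = X \cap (\family{A} \vee \family{B})$ for any two covers $\family{A}, \family{B}$ of $Z$. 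Granting these, since $\family{C} = X \cap \family{Z}$, fact \emph{(a)} gives $T^{-k}(\family{C}) = X \cap S^{-k}(\family{Z})$, and iterating fact \emph{(b)} across the $n$-fold join turns $\family{C}_T^n = \bigvee_{k=0}^{n-1} T^{-k}(\family{C})$ into $X \cap \bigvee_{k=0}^{n-1} S^{-k}(\family{Z}) = X \cap \family{Z}_S^n$.

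For fact \emph{(a)}, the first step is to extract the structural consequence of Definition \ref{def:subsistema}. Since $T$ is a dynamical system over $X$ it maps $X$ into $X$, and since $T(x) = S(x)$ for every $x \in X$, the set $X$ is forward invariant under $S$; by a routine induction this yields $S^k(x) = T^k(x)$ for every $x \in X$ and every $k \geq 0$. With this in hand, for a fixed $Z_i \in \family{Z}$ and $x \in X$ I would simply note the chain of equivalences $x \in T^{-k}(X \cap Z_i) \iff T^k(x) \in Z_i \iff S^k(x) \in Z_i \iff x \in X \cap S^{-k}(Z_i)$, where the condition $T^k(x) \in X$ is automatic because $T$ maps $X$ into itself. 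Letting $Z_i$ range over $\family{Z}$ promotes this to the asserted equality of families.

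For fact \emph{(b)}, expanding both sides with the defining formula for $\vee$ shows that each is built from sets of the form $A \cap B \cap X$ with $A \in \family{A}$ and $B \in \family{B}$; this is the step I expect to demand the most care, because the join discards empty intersections whereas restriction to $X$ may create new ones. Indeed, $(X \cap \family{A}) \vee (X \cap \family{B})$ keeps $A \cap B \cap X$ precisely when it is nonempty, while $X \cap (\family{A} \vee \family{B})$ arises from those $A \cap B$ that are nonempty in $Z$ and can thus acquire the empty set. Since every nonempty member of either family is a member of the other, the two families coincide once the empty set is disregarded; as the empty set never belongs to a minimal subcover, it is irrelevant both to the covers as covers of $X$ and to the computation of $\covercardinality{\cdot}$ for which the lemma will be used. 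Combining \emph{(a)} and \emph{(b)} by induction on $n$ then completes the argument.
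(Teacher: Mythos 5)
Your proof is correct and follows essentially the same route as the paper's: an induction on $n$ whose inductive step combines your facts \emph{(a)} and \emph{(b)}, with the paper simply asserting $T^{-n}(\family{C}) = X \cap S^{-n}(\family{Z})$ without proof and leaving the commutation of restriction with $\vee$ implicit. You are in fact slightly more careful than the paper, since you flag and dispose of the genuine (but harmless) discrepancy that $X \cap \family{Z}_S^n$ may contain the empty set while the join defining $\family{C}_T^n$ discards it.
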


  \begin{proof}
    The case $n = 1$ is trivial.
    Assume it is true for $n$.
    Since
    $T^{-n}(\family{C}) = X \cap S^{-n}(\family{Z})$,
    \begin{align*}
      \family{C}_T^{n+1}
      &=
      \family{C}_T^n
      \cap
      T^{-n}(\family{C})
      \\
      &=
      \left(
        X \cap \family{Z}_S^n
      \right)
      \cap
      \left(
        X \cap S^{-n}(\family{Z})
      \right)
      \\
      &=
      X
      \cap
      \family{Z}_S^{n+1}.
    \end{align*}
  \end{proof}

  \begin{lemma}
    \label{lemma:compactificacao}
    Whenever $X$ is a topological space
    with metrizable one point compactification $X^* = X \cup \set{\infty}$,
    any topological dynamical system $\dynamicalsystem{X}{T}$
    is the subsystem of a dynamical system $\dynamicalsystem{Z}{S}$,
    with $Z$ compact metrizable.
    In this case,
    the natural projection
    \begin{equation*}
      \functionarray{\pi}{Z}{X^*}{x}
                    {\pi(x) = \begin{cases}x, &x \in X\\\infty, &x \not \in X\end{cases}}
    \end{equation*}
    is continuous.
  \end{lemma}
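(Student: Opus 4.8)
The plan is to avoid extending $T$ to $X^*$ directly (which would require properness) and instead realize $X$ inside a larger compact metrizable space built from the full orbit structure of $T$. First I would form the countable product $(X^*)^{\naturals}$, which is compact by Tychonoff's theorem and metrizable because it is a countable product of metrizable spaces (one may take $d(\mathbf{x},\mathbf{y}) = \sum_{n} 2^{-n}\min(1, d_*(x_n, y_n))$ for any metric $d_*$ compatible with $X^*$). Then I would introduce the orbit embedding $\function{\phi}{X}{(X^*)^{\naturals}}$ defined by $\phi(x) = (x, T(x), T^2(x), \dots)$. Each coordinate $x \mapsto T^n(x)$ is continuous as a map into $X \subset X^*$, so $\phi$ is continuous; it is injective since its zeroth coordinate recovers $x$; and its inverse is the restriction to $\phi(X)$ of the zeroth coordinate projection $\function{\pi_0}{(X^*)^{\naturals}}{X^*}$, which is continuous. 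Hence $\phi$ is a homeomorphism onto its image, and I would henceforth identify $X$ with $\phi(X)$.

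Next I would set $Z = \closure{\phi(X)}$, the closure taken in $(X^*)^{\naturals}$; being closed in a compact metrizable space, $Z$ is itself compact metrizable. Let $\sigma$ denote the shift $\sigma(x_0, x_1, x_2, \dots) = (x_1, x_2, \dots)$, which is continuous on the product. The computation $\sigma(\phi(x)) = (T(x), T^2(x), \dots) = \phi(T(x))$ shows $\sigma(\phi(X)) \subset \phi(X)$, and continuity of $\sigma$ then gives $\sigma(Z) = \sigma(\closure{\phi(X)}) \subset \closure{\sigma(\phi(X))} \subset \closure{\phi(X)} = Z$. I would therefore define $S = \sigma|_Z$, a continuous self-map of $Z$. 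Under the identification of $X$ with $\phi(X)$, the same computation $S(\phi(x)) = \phi(T(x))$ shows that $S$ restricts to $T$ on $X$, so that $\dynamicalsystem{X}{T}$ is a subsystem of $\dynamicalsystem{Z}{S}$ in the sense of Definition \ref{def:subsistema}.

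Finally, I would identify the stated projection $\pi$ with $\pi_0|_Z$ and prove its continuity, which is immediate since it is the restriction of a coordinate projection. The content is to verify that $\pi_0|_Z$ has exactly the prescribed form, i.e.\ that $\pi_0(z) = \infty$ for every $z \in Z \setminus \phi(X)$. Given such a $z$, pick a sequence $\phi(x_k) \to z$; then $x_k = \pi_0(\phi(x_k)) \to \pi_0(z)$ in $X^*$. If $\pi_0(z)$ were a point $y \in X$, then ---because $X$ is open in $X^*$, so that the subspace topology on $X$ agrees with its original topology--- we would have $x_k \to y$ in $X$, whence $T^n(x_k) \to T^n(y)$ for every $n$ by continuity, forcing $\phi(x_k) \to \phi(y)$ and $z = \phi(y) \in \phi(X)$, a contradiction. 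Thus $\pi_0(z) = \infty$.

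The main obstacle I anticipate is precisely this last verification: the embedding property and the shift-invariance are essentially formal, but matching $\pi_0|_Z$ with the prescribed projection hinges on the interplay between openness of $X$ in $X^*$ and the joint continuity of all the iterates $T^n$, which together guarantee that no boundary point of $\phi(X)$ can have a zeroth coordinate lying in $X$.
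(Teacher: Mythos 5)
Your proof is correct, and up to the final step it is the paper's own construction: the orbit map $x \mapsto (x, Tx, T^2x, \dotsc)$ into the countable product $\prod_{n=0}^{\infty} X^*$, with $S$ the shift. The genuine divergence is the choice of $Z$ and, consequently, the continuity argument for $\pi$. The paper keeps the \emph{entire} product as $Z$, defines $\pi$ piecewise, and verifies continuity only at points of $Z \setminus \iota(X)$, by computing $\pi^{-1}(X^* \setminus K) = Z \setminus \iota(K)$, which is open because $\iota(K)$ is compact; you instead pass to the orbit closure $Z = \closure{\phi(X)}$ and show that there $\pi$ coincides with the coordinate projection $\pi_0|_Z$, so continuity is inherited everywhere at once. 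Your variant actually buys strictly more: on the full product, $\pi$ is in general \emph{not} continuous at points of $\iota(X)$ --- for $X = (0,1)$ and $T = \identity$, the points $z_k$ agreeing with $\iota(1/2)$ in the first $k$ coordinates and equal to $\infty$ afterwards satisfy $z_k \to \iota(1/2)$ while $\pi(z_k) = \infty \not\to 1/2$ --- so the paper's argument only certifies continuity off $\iota(X)$, and small balls of the induced pseudometric $\widetilde{d}(x,y) = d(\pi(x),\pi(y))$ centered at points of $\iota(X)$ need not be open there. Restricting to the closure, combined with your boundary claim that every $z \in \closure{\phi(X)} \setminus \phi(X)$ has $\pi_0(z) = \infty$, is exactly what makes the lemma's continuity assertion true at every point of $Z$ (and keeps the subsequent pseudometric remarks honest). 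That boundary claim is the one nontrivial verification your route requires, and your argument for it is sound: metrizability of $Z$ justifies working with sequences, and if $\pi_0(z) = y \in X$ then, the subspace topology on $X$ being the original one, $x_k \to y$ in $X$, all iterates $T^n(x_k)$ converge to $T^n(y)$, and $z = \phi(y) \in \phi(X)$, a contradiction. In short: same embedding, same shift, but your closure trick replaces the paper's piecewise continuity check with a cleaner and strictly stronger conclusion, at the cost of one limit argument.
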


  \begin{proof}
    Let
    \begin{equation*}
      Z
      =
      \prod_{n=0}^{\infty} X^*.
    \end{equation*}
    As a denumerable product of compact metrizable spaces,
    $Z$ is compact metrizable.
    Identify $X$ with a subset of $Z$ by the injection
    \begin{equation*}
      \functionarray{\iota}{X}{Z}{x}{(T^nx)_{n=0}^{\infty}}.
    \end{equation*}
    If we let $\function{\pi_n}{Z}{X^*}$ be the projection
    onto the $n$-th coordinate,
    it is easy to see that $\iota$ is continuous,
    since $\pi_n \circ \iota = T^n$ is continuous for every $n \in \naturals$.
    Also,
    restricted to its image, $\iota$ has continuous inverse $\pi_0$.
    Therefore,
    $X$ is homeomorphic to $\iota(X)$.
    And with this identification,
    $T$ is just the restriction to $\iota(X)$
    of the shift
    \begin{equation*}
      \functionarray{S}{Z}{Z}{(x_n)_{n=0}^{\infty}}{(x_{n+1})_{n=0}^{\infty}}.
    \end{equation*}

    To see that $\pi$ is continuous at $Z \setminus \iota(X)$,
    we just need to show that $\pi^{-1}(A)$ is open for every
    open set $A$ with $\infty \in A$.
    But since $A = X^* \setminus K$ for some compact $K \subset X$,
    we have that
    \begin{equation*}
      \pi^{-1}(A)
      =
      \pi^{-1}
      \left(
        X^* \setminus K
      \right)
      =
      Z \setminus \pi^{-1}(K)
      =
      Z \setminus \iota(K),
    \end{equation*}
    which is open in $Z$, because $\iota(K)$ is compact.
  \end{proof}

  This projection $\pi$,
  on Lemma \ref{lemma:compactificacao},
  induces the pseudometric
  \begin{equation*}
    \widetilde{d}(x,y)
    =
    d(\pi(x), \pi(y))
  \end{equation*}
  over $Z$.
  We denote by the same letter $d$ the restriction of this distance to $X$.
  Since $\pi$ is continuous,
  this pseudometric is such that the ``open balls'' are in fact open.
  Let $\complementset{X} = Z \setminus X$.
  Then,
  these balls are also such that they either contain $\complementset{X}$,
  or have empty intersection with it.

  Under the conditions of
  Lemma \ref{lemma:compactificacao},
  since $X$ is locally compact and separable,
  it is $\sigma$-compact.
  That is,
  $X$ can be written as a denumerable union of compact sets.
  Since compact sets of $X$ are compact sets of $Z$,
  it follows that $X \subset Z$ is a Borel set.
  In this case,
  the Borel sets of $X$ are Borel sets of $Z$,
  and we may restrict Borel measures over $Z$
  to the Borel sets of $X$.
  On the other hand,
  if $\mu$ is a Borel measure over $X$,
  we can extend it to $Z$ by declaring $\mu(Z \setminus X) = 0$.
  Or, equivalently, we might define
  $\mu(C) = \mu(C \cap X)$ for every Borel set $C \subset Z$.
  We shall use the same letter to denote the measure over $Z$
  and its restriction to $X$ or any other Borel subset.
  If we want to make the distinction clear,
  we may write $\mu|_X$.
  According to the following lemma,
  when $\dynamicalsystem{X}{T}$ is a subsystem of
  $\dynamicalsystem{Z}{S}$
  and $\mu$ is an $S$-invariant finite measure,
  then $\mu$ is also $T$-invariant.

  \begin{lemma}
    \label{lemma:s_invariant_is_t_invariant}
    When $\dynamicalsystem{X}{T}$ is a measurable subsystem of
    $\dynamicalsystem{Z}{S}$
    and $\mu$ is an $S$-invariant finite measure,
    then $\mu$ is also $T$-invariant.
  \end{lemma}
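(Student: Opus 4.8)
The plan is to verify the defining identity $\mu(T^{-1}(A)) = \mu(A)$ for every measurable $A \subset X$, where $\mu$ here denotes the restriction to $X$ of the $S$-invariant measure on $Z$. First I would record the elementary fact that, because $T$ is the restriction of $S$ to $X$ and takes values in $X$, the preimage under $T$ of a measurable $A \subset X$ is exactly $T^{-1}(A) = X \cap S^{-1}(A)$. Thus $S^{-1}(A)$ splits as the disjoint union of $T^{-1}(A)$ and the ``escaping'' part $(Z \setminus X) \cap S^{-1}(A)$, and the whole problem reduces to showing that this escaping part is $\mu$-null.

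The key step, and the place where both hypotheses (finiteness of $\mu$ and the fact that $T$ maps $X$ into itself) are used, is the following. Since $S(x) = T(x) \in X$ for every $x \in X$, we have the inclusion $X \subset S^{-1}(X)$. Applying $S$-invariance to the set $X$ gives $\mu(S^{-1}(X)) = \mu(X)$, and because $\mu$ is finite we may subtract to obtain $\mu(S^{-1}(X) \setminus X) = \mu(S^{-1}(X)) - \mu(X) = 0$. Hence the set $S^{-1}(X) \setminus X$ of points lying outside $X$ that are nevertheless mapped by $S$ back into $X$ is null.

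To finish, I would observe that for $A \subset X$ the escaping part satisfies $(Z \setminus X) \cap S^{-1}(A) \subset (Z \setminus X) \cap S^{-1}(X) = S^{-1}(X) \setminus X$, so it too is $\mu$-null. Therefore $\mu(T^{-1}(A)) = \mu(X \cap S^{-1}(A)) = \mu(S^{-1}(A)) = \mu(A)$, where the middle equality discards the null escaping part and the last equality is $S$-invariance. This is precisely the $T$-invariance of $\mu$ (and since $A \subset X$ and $T^{-1}(A) \subset X$, the values computed agree with those of the restricted measure $\mu|_X$).

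The only real obstacle is conceptual rather than computational: one must resist the naive hope that $S$-invariance makes the whole complement $Z \setminus X$ null, which is false in general. What is true, and all that is needed, is that the mass returning to $X$ in a single step is controlled by $\mu(S^{-1}(X) \setminus X)$, and this is forced to vanish by the interplay of the forward invariance $X \subset S^{-1}(X)$ with the finiteness of $\mu$. Finiteness is genuinely essential here, since it is exactly what legitimises the subtraction $\mu(S^{-1}(X)) - \mu(X)$.
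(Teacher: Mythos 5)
Your proof is correct, and it rests on the same two load-bearing facts as the paper's: the forward invariance $X \subset S^{-1}(X)$ (equivalently $S^{-1}(Z \setminus X) \subset Z \setminus X$, which holds because $T = S|_X$ maps $X$ into $X$) and the finiteness of $\mu$, which licenses a subtraction. But you organize them differently. The paper never isolates a null set: it adjoins the complement, computing
\begin{equation*}
  \mu(A) + \mu(Z \setminus X)
  =
  \mu\left( S^{-1}(A) \cup S^{-1}(Z \setminus X) \right)
  \leq
  \mu\left( T^{-1}(A) \cup (Z \setminus X) \right)
  =
  \mu(T^{-1}(A)) + \mu(Z \setminus X),
\end{equation*}
cancels the finite term $\mu(Z \setminus X)$ to get $\mu(A) \leq \mu(T^{-1}(A))$, and then closes the sandwich with the trivial monotonicity $\mu(T^{-1}(A)) \leq \mu(S^{-1}(A)) = \mu(A)$. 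You instead establish the single identity $\mu(S^{-1}(X) \setminus X) = 0$ and obtain equality in one pass, by writing $S^{-1}(A)$ as the disjoint union of $T^{-1}(A)$ and an escaping part contained in $S^{-1}(X) \setminus X$. The two arguments are logically equivalent and of comparable length, but yours is slightly more informative: it pinpoints the one set through which mass could leak, and the same computation applied to $S^{n}$ (using $X \subset S^{-n}(X)$ and $\mu(S^{-n}(X)) = \mu(X)$) yields directly the paper's Lemma \ref{lemma:medida_no_complemento}, which the paper instead derives afterwards as a separate consequence of the present lemma. Your closing caution is also well placed: $S$-invariance does not make all of $Z \setminus X$ null (consider an $S$-invariant measure concentrated on $Z \setminus X$), so the weaker claim $\mu(S^{-1}(X) \setminus X) = 0$ is exactly the right intermediate statement, and finiteness is genuinely needed for the subtraction that proves it.
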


  \begin{proof}
    Take a measurable set $A \subset X$.
    Since $\mu$ is $S$-invariant,
    \begin{align*}
      \mu(A)
      +
      \mu
      \left(
        \complementset{X}
      \right)
      &=
      \mu
      \left(
        A
        \cup
        \complementset{X}
      \right)
      \\
      &=
      \mu
      \left(
        S^{-1}(A)
        \cup
        S^{-1}(\complementset{X})
      \right)
      \\
      &\leq
      \mu
      \left(
        S^{-1}(A)
        \cup
        \complementset{X}
      \right)
      \\
      &=
      \mu
      \left(
        T^{-1}(A)
        \cup
        \complementset{X}
      \right)
      \\
      &=
      \mu
      \left(
        T^{-1}(A)
      \right)
      +
      \mu
      \left(
        \complementset{X}
      \right).
    \end{align*}
    This means that
    \begin{equation*}
      \mu(A)
      \leq
      \mu
      \left(
        T^{-1}(A)
      \right).
    \end{equation*}
    But,
    on the other hand,
    \begin{equation*}
      \mu
      \left(
        T^{-1}(A)
      \right)
      \leq
      \mu
      \left(
        S^{-1}(A)
      \right)
      =
      \mu(A).
    \end{equation*}
  \end{proof}

  \begin{lemma}
    \label{lemma:medida_no_complemento}
    Supose $\dynamicalsystem{X}{T}$ is a measurable subsystem of
    $\dynamicalsystem{Z}{S}$.
    Whenever $\mu$ is an $S$-invariant finite measure,
    we have that for any Borel set $A \subset X$,
    and any $n \in \naturals$,
    \begin{equation*}
      \mu
      \left(
        \complementset{X} \cap S^{-n}(A)
      \right)
      =
      0.
    \end{equation*}
  \end{lemma}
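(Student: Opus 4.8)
The plan is to proceed by induction on $n$, with all the real content concentrated in the base case $n = 1$, where the $S$-invariance and the $T$-invariance interact. For a Borel set $A \subset X$, I would split the preimage $S^{-1}(A)$ according to whether a point lies in $X$ or in $\complementset{X}$. Since $A \subset X$ and $S$ agrees with $T$ on $X$, the part inside $X$ is exactly $T^{-1}(A)$, while the part in $\complementset{X}$ is $\complementset{X} \cap S^{-1}(A)$, and these two pieces are disjoint. Taking measures and using that $\mu$ is both $S$-invariant and, by Lemma \ref{lemma:s_invariant_is_t_invariant}, $T$-invariant, gives $\mu(A) = \mu(S^{-1}(A)) = \mu(T^{-1}(A)) + \mu(\complementset{X} \cap S^{-1}(A)) = \mu(A) + \mu(\complementset{X} \cap S^{-1}(A))$. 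Because $\mu$ is finite, I may cancel the term $\mu(A)$ and conclude that $\mu(\complementset{X} \cap S^{-1}(A)) = 0$, which settles $n = 1$.

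For the inductive step, assuming the statement for $n$ and every Borel $A \subset X$, I would write $\complementset{X} \cap S^{-(n+1)}(A) = \complementset{X} \cap S^{-1}(S^{-n}(A))$ and decompose the inner set $S^{-n}(A)$ into its intersection with $X$ and with $\complementset{X}$. The piece $S^{-n}(A) \cap X$ is a Borel subset of $X$, so applying the already-established base case to this set shows its contribution to $\complementset{X} \cap S^{-1}(\,\cdot\,)$ has measure zero. The piece $S^{-n}(A) \cap \complementset{X}$ has measure zero directly by the induction hypothesis; since $\mu(S^{-1}(E)) = \mu(E)$ for every Borel $E$, monotonicity yields $\mu(\complementset{X} \cap S^{-1}(S^{-n}(A) \cap \complementset{X})) \le \mu(S^{-n}(A) \cap \complementset{X}) = 0$. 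Adding the two contributions closes the induction.

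I expect the only delicate point to be the bookkeeping of this two-fold decomposition at the inductive step: one must recognize that the part of $S^{-n}(A)$ lying inside $X$ needs exactly one further pullback before the base case applies, whereas the part lying outside $X$ is controlled immediately by the induction hypothesis together with $S$-invariance. Beyond the two invariances and the finiteness of $\mu$, no additional structure of $S$, of $T$, or of the compactification is required.
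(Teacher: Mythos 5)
Your proof is correct, but it is packaged differently from the paper's. The paper handles all $n$ at once, with no induction: since $T^{-n}(A) = X \cap S^{-n}(A)$ --- the $n$-fold version of the identity you use at $n=1$, which holds for the same trivial reason and is essentially Lemma \ref{lemma:partition_restriction} --- one writes the single chain
\begin{equation*}
  \mu\left(X \cap S^{-n}(A)\right)
  =
  \mu\left(T^{-n}(A)\right)
  =
  \mu(A)
  =
  \mu\left(S^{-n}(A)\right)
  =
  \mu\left(X \cap S^{-n}(A)\right)
  +
  \mu\left(\complementset{X} \cap S^{-n}(A)\right)
\end{equation*}
and cancels by finiteness; this is exactly your base case with $S^{-1}, T^{-1}$ replaced by $S^{-n}, T^{-n}$. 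Your induction instead invokes only the $n=1$ identity and propagates it by splitting $S^{-n}(A)$ along $X$, controlling the piece inside $X$ by the base case and the piece outside $X$ by the induction hypothesis plus $S$-invariance and monotonicity. All of your steps check out --- in particular $S^{-n}(A) \cap X$ is a measurable subset of $X$ to which the base case legitimately applies, since $X$ is measurable in $Z$, and the cancellation of $\mu(A)$ is licensed by finiteness of $\mu$. The paper's route buys brevity: the invariance-cancellation argument is run once at level $n$ with no bookkeeping. Your route avoids ever stating the $n$-fold identity, but since that identity is a one-line induction already recorded in the paper, your inductive step is, in hindsight, extra machinery rather than a genuine saving; still, it is a complete and valid alternative.
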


  \begin{proof}
    Lemma \ref{lemma:s_invariant_is_t_invariant}
    implies that $\mu$ is also $T$-invariant.
    Since
    $T^{-n}(A) = X \cap S^{-n}(A)$,
    \begin{align*}
      \mu
      \left(
        X \cap S^{-n}(A)
      \right)
      &=
      \mu
      \left(
        T^{-n}(A)
      \right)
      \\
      &=
      \mu(A)
      \\
      &=
      \mu
      \left(
        S^{-n}(A)
      \right)
      \\
      &=
      \mu
      \left(
        X \cap S^{-n}(A)
      \right)
      +
      \mu
      \left(
        \complementset{X} \cap S^{-n}(A)
      \right).
    \end{align*}
    And therefore,
    $
      \mu
      \left(
        \complementset{X} \cap S^{-n}(A)
      \right)
      =
      0
    $.
  \end{proof}
}
  {\subsection{Kolmogorov-Sinai Entropy}

  Consider the finite measure space $\measurespace{X}$ and a
  finite measurable partition $\family{C}$.
  The \emph{partition entropy} of $\family{C}$ is
  \begin{equation*}
    \partitionentropy{\mu}{\family{C}}
    =
    \sum_{C \in \family{C}}
    \mu(C) \log \frac{1}{\mu(C)}.
  \end{equation*}
  For the measurable dynamical system $\dynamicalsystem{X}{T}$,
  if $\mu$ is a $T$-invariant finite measure,
  the \emph{partition entropy of $T$ with respect to $\family{C}$} is
  \begin{equation*}
    \tpartitionentropy{\mu}{\family{C}}{T}
    =
    \lim_{n \rightarrow \infty}
    \frac{1}{n} \partitionentropy{\mu}{\family{C}^n},
  \end{equation*}
  and the \emph{Kolmogorov-Sinai entropy} of $T$ is
  \begin{equation*}
    \measureentropy{\mu}{T}
    =
    \sup_{\substack{\family{C} \text{: finite} \\ \text{measurable partition}}}
    \tpartitionentropy{\mu}{\family{C}}{T}.
  \end{equation*}

  The most basic properties of the partition entropy
  are a consequence of the concavity of the function
  \begin{equation*}
    f(x)
    =
    -x \log x
    =
    x \log \frac{1}{x}.
  \end{equation*}
  The following lemma states properties that are standard for
  probability measures (see \cite{walters}).
  They are easily restated in terms of finite measures,
  but we shall not need them in this general case.

  \begin{lemma}
    \label{lemma:partition_entropy:properties}
    If $\mu$ is a probability measure
    and $\family{C}$ and $\family{D}$ finite measurable partitions,
    then
    \begin{enumerate}
      \item
        $
          \partitionentropy{\mu}{\family{C} \vee \family{D}}
          \leq
          \partitionentropy{\mu}{\family{C}}
          +
          \partitionentropy{\mu}{\family{D}}
        $.

      \item
        If $\mu_n(C) \rightarrow \mu(C)$ for every $C \in \family{C}$,
        then
        $\partitionentropy{\mu_n}{\family{C}} \rightarrow \partitionentropy{\mu}{\family{C}}$.

      \item
        $\partitionentropy{\mu}{\family{C}} \leq \log \covercardinality{\family{C}}$
    \end{enumerate}
  \end{lemma}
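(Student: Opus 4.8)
The plan is to obtain all three items from the concavity of $f(x) = -x\log x$ on $[0,1]$, treating them in order of increasing difficulty. For item (3), write $\family{C} = \set{C_1, \dotsc, C_N}$ for the $N = \covercardinality{\family{C}}$ nonempty atoms (for a partition, disjointness forces every subcover to contain all of them, so this is indeed the cover cardinality); since $\family{C}$ is a partition and $\mu$ a probability, $\sum_{i=1}^N \mu(C_i) = 1$. Applying Jensen's inequality to the concave $f$ with uniform weights $1/N$ gives
\begin{equation*}
  \frac{1}{N} \partitionentropy{\mu}{\family{C}}
  =
  \frac{1}{N} \sum_{i=1}^N f(\mu(C_i))
  \leq
  f\!\left(\frac{1}{N} \sum_{i=1}^N \mu(C_i)\right)
  =
  f\!\left(\frac{1}{N}\right)
  =
  \frac{\log N}{N},
\end{equation*}
and multiplying through by $N$ yields $\partitionentropy{\mu}{\family{C}} \leq \log N$. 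Item (2) is even quicker: $\partitionentropy{\mu}{\family{C}} = \sum_{C \in \family{C}} f(\mu(C))$ is a \emph{finite} sum, and $f$ extends continuously to $[0,1]$ with $f(0) = 0$; hence $\mu_n(C) \to \mu(C)$ for each of the finitely many atoms forces $f(\mu_n(C)) \to f(\mu(C))$ termwise, and a finite sum of convergent terms converges to $\partitionentropy{\mu}{\family{C}}$.

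The substance of the lemma is item (1). Writing $\family{C} = \set{C_i}$, $\family{D} = \set{D_j}$, and setting $c_i = \mu(C_i)$, $d_j = \mu(D_j)$, $a_{ij} = \mu(C_i \cap D_j)$ --- so that $\sum_j a_{ij} = c_i$ and $\sum_i a_{ij} = d_j$ --- I would first record the chain rule
\begin{equation*}
  \partitionentropy{\mu}{\family{C} \vee \family{D}}
  =
  \partitionentropy{\mu}{\family{C}}
  +
  \conditionalpartitionentropy{\mu}{\family{D}}{\family{C}},
  \qquad
  \conditionalpartitionentropy{\mu}{\family{D}}{\family{C}}
  =
  \sum_i c_i \sum_j f\!\left(\frac{a_{ij}}{c_i}\right),
\end{equation*}
which is a purely algebraic identity obtained by expanding both sides and using $\log\frac{c_i}{a_{ij}} = \log c_i - \log a_{ij}$ together with $\sum_j a_{ij} = c_i$. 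It then suffices to show $\conditionalpartitionentropy{\mu}{\family{D}}{\family{C}} \leq \partitionentropy{\mu}{\family{D}}$. Here is where concavity enters: each $d_j = \sum_i c_i (a_{ij}/c_i)$ is a convex combination of the numbers $a_{ij}/c_i$ with weights $c_i$ summing to $1$, so concavity of $f$ gives $f(d_j) \geq \sum_i c_i f(a_{ij}/c_i)$; summing over $j$ and exchanging the order of summation produces exactly $\partitionentropy{\mu}{\family{D}} \geq \conditionalpartitionentropy{\mu}{\family{D}}{\family{C}}$.

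I expect the only delicate point to be bookkeeping rather than analysis: atoms of measure zero and empty intersections $C_i \cap D_j = \emptyset$ must be handled through the convention $0 \log 0 = 0$ (that is, $f(0) = 0$), and one should restrict the conditional sums to indices $i$ with $c_i > 0$ so that the ratios $a_{ij}/c_i$ are defined. Once these conventions are fixed consistently, every identity above becomes a finite manipulation and every inequality reduces to a single application of Jensen's inequality for the concave $f$.
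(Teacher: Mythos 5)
Your proof is correct, and it is essentially the argument the paper relies on: the paper states this lemma without proof, deferring to \cite{walters}, where items (1)--(3) are proved exactly as you do --- the chain rule $\partitionentropy{\mu}{\family{C} \vee \family{D}} = \partitionentropy{\mu}{\family{C}} + \conditionalpartitionentropy{\mu}{\family{D}}{\family{C}}$ followed by Jensen's inequality for $f(x) = -x\log x$, which is also what the paper means when it remarks just before the lemma that ``the most basic properties of the partition entropy are a consequence of the concavity'' of that function. Your handling of the null-measure and empty-atom conventions is the right bookkeeping and introduces no gap.
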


  \begin{lemma}
    \label{lemma:partition_entropy:convex_combination}
    If $\mu = \alpha \gamma + \beta \nu$
    is a convex combination of finite measures
    and
    $\family{C}$ is a finite measurable partition,
    then
    \begin{equation*}
      \alpha
      \partitionentropy{\gamma}{\family{C}}
      +
      \beta
      \partitionentropy{\nu}{\family{C}}
      \leq
      \partitionentropy{\mu}{\family{C}}.
    \end{equation*}
  \end{lemma}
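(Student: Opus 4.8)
The plan is to recognize the claimed inequality as the statement that the partition entropy $\partitionentropy{\mu}{\family{C}}$ is a concave functional of the measure $\mu$, and to reduce this to the concavity of $f(x) = -x \log x$ applied separately to each element of the partition. Since $\family{C}$ is a \emph{partition}, we have $\partitionentropy{\mu}{\family{C}} = \sum_{C \in \family{C}} f(\mu(C))$, and the same formula holds with $\mu$ replaced by $\gamma$ or $\nu$. This is what makes the term-by-term approach available.

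First I would fix an arbitrary element $C \in \family{C}$ and observe that, since $\mu = \alpha \gamma + \beta \nu$ with $\alpha + \beta = 1$ and $\alpha, \beta \geq 0$, the number $\mu(C) = \alpha \gamma(C) + \beta \nu(C)$ is exactly the convex combination of the nonnegative reals $\gamma(C)$ and $\nu(C)$ with weights $\alpha$ and $\beta$. Then, invoking the concavity of $f$ (already noted in the paragraph preceding this lemma), I would apply the defining inequality of concavity to this combination:
\begin{equation*}
  f\left(\alpha \gamma(C) + \beta \nu(C)\right)
  \geq
  \alpha f(\gamma(C)) + \beta f(\nu(C)).
\end{equation*}
Summing this inequality over the finitely many $C \in \family{C}$ and recollecting the three sums into partition entropies yields $\partitionentropy{\mu}{\family{C}} \geq \alpha \partitionentropy{\gamma}{\family{C}} + \beta \partitionentropy{\nu}{\family{C}}$, which is the assertion.

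There is no serious obstacle here; the only point demanding a moment's care is that the lemma concerns finite measures rather than probability measures, so the weights $\alpha, \beta$ are genuinely a convex decomposition but the arguments $\gamma(C)$, $\nu(C)$ need not lie in $[0,1]$. This causes no trouble because $f$ is concave on the whole half-line $[0, \infty)$, so the pointwise step requires no normalization. The remaining thing to keep straight is simply that the inequality runs in the direction dictated by concavity (not convexity), which is exactly what the statement records.
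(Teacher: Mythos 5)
Your proof is correct and follows essentially the same route as the paper: apply the concavity of $f(x) = x \log \frac{1}{x}$ to the convex combination $\mu(C) = \alpha \gamma(C) + \beta \nu(C)$ for each fixed $C \in \family{C}$, then sum over the finitely many elements of the partition. Your added remark that $f$ is concave on all of $[0,\infty)$ --- so no normalization is needed even though $\gamma(C)$, $\nu(C)$ may exceed $1$ for finite measures --- is a point the paper's proof passes over silently, and it is worth having made explicit.
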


  \begin{proof}
    For each $C \in \family{C}$,
    the concavity of $x \log \frac{1}{x}$ implies that
    \begin{equation*}
      \alpha
      \left(
        \gamma(C)
        \log \frac{1}{\gamma(C)}
      \right)
      +
      \beta
      \left(
        \nu(C)
        \log \frac{1}{\nu(C)}
      \right)
      \leq
      \mu(C) \log \frac{1}{\mu(C)}.
    \end{equation*}
    Now,
    one just has to sum up for all $C \in \family{C}$.
  \end{proof}

  \begin{lemma}
    \label{lemma:partition_entropy:subadditivity}
    If $\mu$ is a finite measure over $X$
    with $0 \leq \mu(X) \leq 1$
    and $Y \subset X$ is measurable,
    then
    \begin{equation*}
      \partitionentropy{\mu}{\family{C}}
      \leq
      \partitionentropy{\mu}{Y \cap \family{C}}
      +
      \partitionentropy{\mu}{\complementset{Y} \cap \family{C}}
    \end{equation*}
    for any finite measurable partition $\family{C}$.
  \end{lemma}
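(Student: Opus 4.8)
The plan is to reduce the inequality to a pointwise (block-by-block) statement and then invoke the subadditivity of the function $f(x) = x \log \frac{1}{x}$ on $[0, \infty)$. First I would fix a single block $C \in \family{C}$ and set $a = \mu(C \cap Y)$ and $b = \mu(C \cap \complementset{Y})$. Since $Y$ and $\complementset{Y}$ partition $X$, the block $C$ is the disjoint union of $C \cap Y$ and $C \cap \complementset{Y}$, so $\mu(C) = a + b$. Recalling that $Y \cap \family{C} = \setsuchthat{C \cap Y}{C \in \family{C}}$ and $\complementset{Y} \cap \family{C} = \setsuchthat{C \cap \complementset{Y}}{C \in \family{C}}$, the contribution of $C$ to $\partitionentropy{\mu}{\family{C}}$ is $f(a+b)$, while its contributions to the two terms on the right-hand side are $f(a)$ and $f(b)$. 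Thus it suffices to prove $f(a+b) \leq f(a) + f(b)$ for all $a, b \geq 0$.

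This subadditivity is the crux of the argument, and it follows from the concavity of $f$ emphasized just before the lemma, together with $f(0) = 0$. For $a, b$ not both zero I would write $a = \frac{a}{a+b}(a+b) + \frac{b}{a+b}\cdot 0$ and apply concavity to obtain $f(a) \geq \frac{a}{a+b} f(a+b) + \frac{b}{a+b} f(0) = \frac{a}{a+b} f(a+b)$, and symmetrically $f(b) \geq \frac{b}{a+b} f(a+b)$. Adding these two inequalities gives $f(a) + f(b) \geq f(a+b)$, as desired; the degenerate case $a = b = 0$ is immediate since $f(0) = 0$.

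Finally, summing the per-block inequality $f(\mu(C)) \leq f(\mu(C \cap Y)) + f(\mu(C \cap \complementset{Y}))$ over all $C \in \family{C}$ yields exactly $\partitionentropy{\mu}{\family{C}} \leq \partitionentropy{\mu}{Y \cap \family{C}} + \partitionentropy{\mu}{\complementset{Y} \cap \family{C}}$. The only step requiring genuine care is the subadditivity of $f$; the hypothesis $0 \leq \mu(X) \leq 1$ plays no role in that inequality itself (it holds on all of $[0,\infty)$), but merely guarantees that every argument of $f$ lies in $[0,1]$, where $f \geq 0$, so that all the partition-entropy sums are well-defined and nonnegative.
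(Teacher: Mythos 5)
Your proposal is correct and follows essentially the same route as the paper: both reduce the claim to the per-block inequality $f(\mu(C)) \leq f(\mu(C \cap Y)) + f(\mu(C \cap \complementset{Y}))$ for $f(x) = x \log \frac{1}{x}$ and then sum over $C \in \family{C}$. The only difference is in the one-line justification of this subadditivity --- the paper deduces it from monotonicity of the logarithm (using $p, q \leq \mu(C)$, with the cases $p = 0$ and $q = 0$ treated separately), whereas you derive it from concavity of $f$ together with $f(0) = 0$; your side remark that the hypothesis $0 \leq \mu(X) \leq 1$ is not actually needed for the inequality itself is also accurate.
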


  \begin{proof}
    Fix $C \in \family{C}$.
    Let
    $p = \mu(Y \cap C)$
    and
    $q = \mu(\complementset{Y} \cap C)$.
    If $p = 0$,
    then $\mu(C) = q$,
    and
    \begin{align*}
      \mu(C) \log \frac{1}{\mu(C)}
      &=
      q \log \frac{1}{q}
      \\
      &=
      q \log \frac{1}{q}
      +
      p \log \frac{1}{p}.
    \end{align*}
    A similar conclusion follows when $q = 0$.
    Assume $p,q \neq 0$.
    Then,
    since $p,q \leq \mu(C)$,
    \begin{align*}
      \mu(C) \log \frac{1}{\mu(C)}
      &=
      p \log \frac{1}{\mu(C)}
      +
      q \log \frac{1}{\mu(C)}
      \\
      &\leq
      p \log \frac{1}{p}
      +
      q \log \frac{1}{q}.
    \end{align*}
    The lemma follows if we sum up for $C \in \family{C}$.
  \end{proof}

  Notice that our definition of the Kolmogorov-Sinai entropy does
  not assume that $\mu$ is a probability measure.
  This will prove to be useful when $X$ is metrizable locally compact
  (but not necessarily compact).
  In this case, the set of probability measures is
  not compact in the weak-$*$ topology, while the set of finite
  measures with $0 \leq \mu(X) \leq 1$ is indeed compact.

  \begin{lemma}
    \label{le:ksentropy_com_medidas_leq_1}
    Given a measurable dynamical system $\dynamicalsystem{X}{T}$ and a finite
    $T$-invariant measure $\mu$,
    then, for $\alpha \geq 0$,
    \begin{equation*}
      \measureentropy{\alpha \mu}{T}
      =
      \alpha \measureentropy{\mu}{T}.
    \end{equation*}
  \end{lemma}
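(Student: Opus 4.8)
The plan is to reduce the whole statement to a single algebraic identity relating the partition entropies $\partitionentropy{\alpha\mu}{\family{C}}$ and $\partitionentropy{\mu}{\family{C}}$, and then to observe that rescaling the measure introduces only an additive constant that is annihilated by the factor $\frac{1}{n}$ appearing in the definition of $\tpartitionentropy{\mu}{\family{C}}{T}$. First I would assume $\alpha > 0$ and fix a finite measurable partition $\family{C}$. Expanding the logarithm, for each $C \in \family{C}$ one has $\alpha\mu(C)\log\frac{1}{\alpha\mu(C)} = \alpha\bigl(\mu(C)\log\frac{1}{\mu(C)}\bigr) + \alpha\mu(C)\log\frac{1}{\alpha}$. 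Summing over $C \in \family{C}$ and using that $\family{C}$ is a partition, so that $\sum_{C \in \family{C}}\mu(C) = \mu(X)$, this gives $\partitionentropy{\alpha\mu}{\family{C}} = \alpha\partitionentropy{\mu}{\family{C}} + \alpha\mu(X)\log\frac{1}{\alpha}$. The decisive feature is that the extra term depends on $\family{C}$ only through the total mass $\mu(X)$, which is a fixed finite number precisely because $\mu$ is a finite measure.

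Next I would apply this identity to the refined partition $\family{C}^n$, which is again a finite measurable partition of $X$ of total mass $\mu(X)$. Dividing by $n$ yields $\frac{1}{n}\partitionentropy{\alpha\mu}{\family{C}^n} = \alpha\,\frac{1}{n}\partitionentropy{\mu}{\family{C}^n} + \frac{1}{n}\alpha\mu(X)\log\frac{1}{\alpha}$. Letting $n \to \infty$, the last summand vanishes because $\alpha\mu(X)\log\frac{1}{\alpha}$ is a constant, so $\tpartitionentropy{\alpha\mu}{\family{C}}{T} = \alpha\,\tpartitionentropy{\mu}{\family{C}}{T}$. Taking the supremum over all finite measurable partitions and pulling the nonnegative constant $\alpha$ out of the supremum then delivers $\measureentropy{\alpha\mu}{T} = \alpha\measureentropy{\mu}{T}$; the argument remains valid when $\measureentropy{\mu}{T} = \infty$, since in that case both sides are infinite for every $\alpha > 0$.

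Finally, the case $\alpha = 0$ should be handled separately: then $\alpha\mu$ is the zero measure, every term $0\cdot\log\frac{1}{0}$ is read as $0$ under the usual convention $\lim_{x \to 0^+} x\log\frac{1}{x} = 0$, whence $\partitionentropy{0}{\family{C}} = 0$ for every $\family{C}$ and therefore $\measureentropy{0}{T} = 0 = 0\cdot\measureentropy{\mu}{T}$. I do not anticipate any genuine obstacle here; the only points that demand care are that $\mu$ is merely a finite measure rather than a probability measure, so that $\mu(X)$ rather than $1$ occurs in the additive constant, and the observation that this constant is harmless exactly because it is divided by $n$ before the limit is taken.
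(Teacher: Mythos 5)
Your proposal is correct and is essentially identical to the paper's own proof: the same expansion $\partitionentropy{\alpha\mu}{\family{C}^n} = \alpha\partitionentropy{\mu}{\family{C}^n} + \alpha\mu(X)\log\frac{1}{\alpha}$ applied to the refined partitions, with the additive constant killed by the factor $\frac{1}{n}$ in the limit, and the case $\alpha = 0$ dispatched separately via $\measureentropy{0}{T} = 0$. Your extra remarks on the supremum and the infinite-entropy case only make explicit what the paper leaves implicit.
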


  \begin{proof}
    We can assume that $\alpha \neq 0$, since
    $\measureentropy{0}{T} = 0$.

    For any measurable finite partition $\family{C}$,
    \begin{align*}
      \frac{1}{n} \sum_{C \in \family{C}^n}
      \alpha \mu(C) \log \frac{1}{\alpha \mu(C)}
      &=
      \frac{\alpha}{n} \sum_{C \in \family{C}^n}
      \mu(C) \log \frac{1}{\alpha \mu(C)}
      \\ &=
      \frac{\alpha}{n}
      \left(
        \sum_{C \in \family{C}^n}
        \mu(C) \log \frac{1}{\mu(C)}
      \right)
      +
      \frac{\alpha}{n} \mu(X) \log \frac{1}{\alpha}.
    \end{align*}
    Now, we just have to take the limit for $n \rightarrow \infty$.
  \end{proof}

  The following lemma will be very important to reduce
  the prove of the variational principle to the compact case.

  \begin{lemma}
    \label{lemma:entropia_na_compactificacao}
    Let
    $\dynamicalsystem{Z}{S}$
    be a dynamical system and
    $\dynamicalsystem{X}{T}$
    a subsystem with $X \subset Z$ measurable.
    If $\mu$ is an $S$-invariant measure,
    and if
    \begin{equation*}
      \family{Z}
      =
      \set{Z_0, \dotsc, Z_k}
    \end{equation*}
    is a measurable partition of $Z$
    such that
    $Z_1, \dotsc, Z_k \subset X$.
    Then,
    $\mu$ is $T$-invariant and
    \begin{equation*}
      \tpartitionentropy{\mu}{\family{Z}}{S}
      \leq
      \tpartitionentropy{\mu}{\family{C}}{T},
    \end{equation*}
    where $\family{C} = X \cap \family{Z}$.
  \end{lemma}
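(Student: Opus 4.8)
The plan is to compare, at each finite level $n$, the partition $\family{Z}_S^n$ of $Z$ with the partition $\family{C}_T^n = X \cap \family{Z}_S^n$ of $X$, and to show that these two partitions carry the same $\mu$-entropy up to an additive constant independent of $n$. That $\mu$ is $T$-invariant is exactly Lemma~\ref{lemma:s_invariant_is_t_invariant}, so $\tpartitionentropy{\mu}{\family{C}}{T}$ is well defined; and Lemma~\ref{lemma:partition_restriction} provides the identification $\family{C}_T^n = X \cap \family{Z}_S^n$, which lets me match each element $X \cap W$ of $\family{C}_T^n$ with the element $W$ of $\family{Z}_S^n$ from which it comes.

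The heart of the argument --- and the step I expect to be the main obstacle --- is a structural observation about which elements of $\family{Z}_S^n$ meet the complement $\complementset{X} = Z \setminus X$ in positive measure. An element of $\family{Z}_S^n$ is a cylinder $W = \bigcap_{j=0}^{n-1} S^{-j}(Z_{i_j})$ with each $i_j \in \set{0, \dotsc, k}$. I would argue that $\mu(\complementset{X} \cap W) = 0$ unless every $i_j = 0$: if $i_0 \neq 0$ then $W \subset Z_{i_0} \subset X$, so $W$ is disjoint from $\complementset{X}$; and if some $i_m \neq 0$ with $m \geq 1$, then $W \subset S^{-m}(Z_{i_m})$ with $Z_{i_m} \subset X$, so Lemma~\ref{lemma:medida_no_complemento} forces $\mu(\complementset{X} \cap W) = 0$. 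Thus only the single cylinder $W_0 = \bigcap_{j=0}^{n-1} S^{-j}(Z_0)$ can meet $\complementset{X}$ in positive measure.

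Consequently the two partitions differ, measure-theoretically, only in their treatment of $W_0$: for every $W \neq W_0$ one has $\mu(X \cap W) = \mu(W)$, while $W_0$ is replaced by $X \cap W_0$, of measure $\mu(W_0) - \mu(\complementset{X} \cap W_0)$. Writing $f(t) = -t\log t$, the difference of the partition entropies collapses to
\begin{equation*}
  \partitionentropy{\mu}{\family{Z}_S^n}
  -
  \partitionentropy{\mu}{\family{C}_T^n}
  =
  f(\mu(W_0)) - f(\mu(X \cap W_0)),
\end{equation*}
and the subadditivity of $f$ (concave with $f(0) = 0$, which is the termwise content of Lemma~\ref{lemma:partition_entropy:subadditivity} applied with $Y = X$) bounds this by $f(\mu(\complementset{X} \cap W_0)) \leq e^{-1}$, a constant independent of $n$. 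Dividing by $n$ and letting $n \to \infty$ kills the constant and yields $\tpartitionentropy{\mu}{\family{Z}}{S} \leq \tpartitionentropy{\mu}{\family{C}}{T}$, as desired.
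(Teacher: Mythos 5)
Your proof is correct and follows essentially the same route as the paper's: both rest on Lemmas \ref{lemma:s_invariant_is_t_invariant}, \ref{lemma:partition_restriction} and \ref{lemma:medida_no_complemento}, together with the observation that the all-$Z_0$ cylinder is the only element of $\family{Z}_S^n$ that can meet $\complementset{X}$ with positive measure. The only (cosmetic) difference is the packaging of the concavity step: the paper first applies Lemma \ref{lemma:partition_entropy:subadditivity} with $Y = X$ and then shows the error term $\partitionentropy{\mu}{\complementset{X} \cap \family{Z}_S^n}$ equals the constant $\mu\left(\complementset{X}\right)\log\frac{1}{\mu\left(\complementset{X}\right)}$, whereas you localize the entropy difference to the single term at $W_0$ and bound it by $e^{-1}$.
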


  \begin{proof}
    According to Lemma \ref{lemma:s_invariant_is_t_invariant},
    $\mu$ is in fact $T$-invariant.
    According to Lemma \ref{lemma:partition_restriction},
    \begin{equation*}
      \family{C}_T^n
      =
      X
      \cap
      \family{Z}_S^n.
    \end{equation*}
    Now,
    Lemma \ref{lemma:partition_entropy:subadditivity} implies that
    \begin{align*}
      \partitionentropy{\widetilde{\mu}}{\family{Z}_{S}^n}
      &\leq
      \partitionentropy{\mu}{X \cap \family{Z}_{S}^n}
      +
      \partitionentropy{\mu}{\complementset{X} \cap \family{Z}_{S}^n}
      \\
      &=
      \partitionentropy{\mu}{\family{C}_{T}^n}
      +
      \partitionentropy{\mu}{\complementset{X} \cap \family{Z}_{S}^n}.
    \end{align*}
    Dividing by $n$ and taking the limit as $n \rightarrow \infty$
    will prove our claim as soon as we show that
    the second term on the right side is constant.

    Since elements $C \in \family{Z}^n$ are of the form
    \begin{equation*}
      C
      =
      C_0
      \cap
      S^{-1}(C_1)
      \cap \dotsb \cap
      S^{-n+1}(C_{n-1}),
    \end{equation*}
    with $C_j \in \family{Z}$,
    Lemma \ref{lemma:medida_no_complemento}
    implies that the only $C \in \complementset{X} \cap \family{Z}^n$
    with non null measure is
    \begin{equation*}
      C
      =
      \complementset{X}
      \cap
      \bigcap_{j=0}^{n-1} S^{-1}(Z_0).
    \end{equation*}
    In particular,
    since $\complementset{X} \cap \family{Z}$ is a finite partition of $\complementset{X}$,
    \begin{equation*}
      \mu(C)
      =
      \mu
      \left(
        \complementset{X}
      \right).
    \end{equation*}
    Therefore,
    \begin{equation*}
      \partitionentropy{\mu}{\complementset{X} \cap \family{Z}_{S}^n}
      =
      \mu
      \left(
        \complementset{X}
      \right)
      \log
      \frac{1}
      {
        \mu
        \left(
          \complementset{X}
        \right)
      }
    \end{equation*}
    is constant.
  \end{proof}

  {\subsubsection{Conditional Entropy}

  Given a probability measure $\mu$
  and two finite measurable partitions
  $\family{C}$ and $\family{D}$,
  the \emph{conditional entropy} is an important tool to relate
  $\partitionentropy{\mu}{\family{C}}$
  and
  $\partitionentropy{\mu}{\family{D}}$.
  For a measurable set $C$ with $\mu(C) > 0$,
  probability $\mu$ conditioned to $C$,
  $\conditionalprobability{\mu}{C}{\cdot}$
  is given by
  \begin{equation*}
    \conditionalprobability{\mu}{C}{B}
    =
    \frac{\mu(B \cap C)}{\mu(C)}.
  \end{equation*}
  For our purposes,
  when $\mu(C) = 0$,
  the conditional probability can be defined arbitrarily.

  \begin{definition}[Conditional Entropy]
    Given a probability measure $\mu$
    and two finite measurable partitions
    $\family{C}$ and $\family{D}$,
    the \emph{conditional entropy}
    is defined as the expected value
    \begin{equation*}
      \conditionalpartitionentropy{\mu}{\family{D}}{\family{C}}
      =
      \sum_{C \in \family{C}}
      \mu(C)
      \partitionentropy{\conditionalprobability{\mu}{C}{\cdot}}{\family{D}}.
    \end{equation*}
  \end{definition}

  Conditional entropy possesses the following properties.

  \begin{lemma}
    \label{lemma:estimation_using_conditional_entropy}
    Let $\dynamicalsystem{X}{T}$ be a measurable dynamical system
    with $T$-invariant probability measure $\mu$.
    If $\family{C}$ and $\family{D}$ are two measurable finite partitions,
    then
    \begin{equation*}
      \tpartitionentropy{\mu}{\family{C}}{T}
      \leq
      \tpartitionentropy{\mu}{\family{D}}{T}
      +
      \conditionalpartitionentropy{\mu}{\family{C}}{\family{D}}.
    \end{equation*}
  \end{lemma}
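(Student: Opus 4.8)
The plan is to compare the partition entropies $\partitionentropy{\mu}{\family{C}^n}$ and $\partitionentropy{\mu}{\family{D}^n}$ for each fixed $n$, controlling their difference by an $n$-fold sum of conditional entropies which, thanks to the $T$-invariance of $\mu$, collapses to $n\, \conditionalpartitionentropy{\mu}{\family{C}}{\family{D}}$; dividing by $n$ and letting $n \to \infty$ then yields the claim. To carry this out I would first record four standard properties of conditional entropy, all consequences of the definition together with the concavity of $f(x) = -x\log x$ exploited in Lemma \ref{lemma:partition_entropy:properties}: the chain rule $\partitionentropy{\mu}{\family{A} \vee \family{B}} = \partitionentropy{\mu}{\family{B}} + \conditionalpartitionentropy{\mu}{\family{A}}{\family{B}}$; the subadditivity $\conditionalpartitionentropy{\mu}{\family{A} \vee \family{B}}{\family{E}} \leq \conditionalpartitionentropy{\mu}{\family{A}}{\family{E}} + \conditionalpartitionentropy{\mu}{\family{B}}{\family{E}}$; the monotonicity $\conditionalpartitionentropy{\mu}{\family{A}}{\family{E}} \leq \conditionalpartitionentropy{\mu}{\family{A}}{\family{B}}$ whenever $\family{B} \prec \family{E}$ (conditioning on a finer partition never increases entropy); and the invariance $\conditionalpartitionentropy{\mu}{T^{-1}(\family{A})}{T^{-1}(\family{B})} = \conditionalpartitionentropy{\mu}{\family{A}}{\family{B}}$, which holds because $\mu$ is $T$-invariant and $T^{-1}$ commutes with $\vee$.

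For fixed $n$, since $\family{C}^n \prec \family{C}^n \vee \family{D}^n$ and conditional entropy is nonnegative, the chain rule gives
\begin{equation*}
  \partitionentropy{\mu}{\family{C}^n}
  \leq
  \partitionentropy{\mu}{\family{C}^n \vee \family{D}^n}
  =
  \partitionentropy{\mu}{\family{D}^n}
  +
  \conditionalpartitionentropy{\mu}{\family{C}^n}{\family{D}^n}.
\end{equation*}
To estimate the conditional term I write $\family{C}^n = \bigvee_{i=0}^{n-1} T^{-i}(\family{C})$ and apply subadditivity of conditional entropy, obtaining
\begin{equation*}
  \conditionalpartitionentropy{\mu}{\family{C}^n}{\family{D}^n}
  \leq
  \sum_{i=0}^{n-1}
  \conditionalpartitionentropy{\mu}{T^{-i}(\family{C})}{\family{D}^n}.
\end{equation*}
For each $i$ the factor $T^{-i}(\family{D})$ of $\family{D}^n$ makes $\family{D}^n$ finer than $T^{-i}(\family{D})$, that is $T^{-i}(\family{D}) \prec \family{D}^n$, so monotonicity followed by invariance gives
\begin{equation*}
  \conditionalpartitionentropy{\mu}{T^{-i}(\family{C})}{\family{D}^n}
  \leq
  \conditionalpartitionentropy{\mu}{T^{-i}(\family{C})}{T^{-i}(\family{D})}
  =
  \conditionalpartitionentropy{\mu}{\family{C}}{\family{D}}.
\end{equation*}

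Combining the three displays yields $\partitionentropy{\mu}{\family{C}^n} \leq \partitionentropy{\mu}{\family{D}^n} + n\, \conditionalpartitionentropy{\mu}{\family{C}}{\family{D}}$. Dividing by $n$ and taking $n \to \infty$ (both limits defining $\tpartitionentropy{\mu}{\family{C}}{T}$ and $\tpartitionentropy{\mu}{\family{D}}{T}$ exist by subadditivity of $n \mapsto \partitionentropy{\mu}{\family{C}^n}$) gives
\begin{equation*}
  \tpartitionentropy{\mu}{\family{C}}{T}
  \leq
  \tpartitionentropy{\mu}{\family{D}}{T}
  +
  \conditionalpartitionentropy{\mu}{\family{C}}{\family{D}},
\end{equation*}
as desired. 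The genuinely delicate content lies entirely in the four conditional-entropy identities, and among these the main obstacle is the monotonicity of entropy under refinement of the conditioning partition: this is the one step that truly uses Jensen's inequality for the concave $f$ rather than a mere regrouping of terms, and it is precisely what forces the hypothesis that $\mu$ be a probability measure. Everything else is bookkeeping with the chain rule and with the $T$-invariance of $\mu$.
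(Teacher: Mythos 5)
Your proof is correct, but note that the paper does not actually prove this lemma: its entire proof is a citation of item (iv) of Theorem 4.12 in \cite{walters}. Your argument --- the chain-rule bound $\partitionentropy{\mu}{\family{C}^n} \leq \partitionentropy{\mu}{\family{D}^n} + \conditionalpartitionentropy{\mu}{\family{C}^n}{\family{D}^n}$, followed by subadditivity of conditional entropy, monotonicity under refinement of the conditioning partition, and $T$-invariance to collapse the conditional term to $n\,\conditionalpartitionentropy{\mu}{\family{C}}{\family{D}}$ --- is precisely the standard proof found in that reference, so you have simply supplied, correctly, the details the paper delegates to Walters.
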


  \begin{proof}
    This is item $(iv)$ of Theorem 4.12 from \cite{walters}.
  \end{proof}
}
}
  {\subsection{Topological Entropy}

  A purely topological concept of entropy for a compact system
  introduced by Adler, Konheim and McAndrew in \cite{akm:entropia},
  analogous to the Kolmogorov-Sinai entropy,
  is the \emph{topological entropy}.
  Dinaburg and Goodman proved (see \cite{dinaburg, goodman})
  the \emph{variational principle},
  which states that for metrizable compact systems,
  the \emph{topological entropy} is equal to the supremum of the
  Kolmogorov-Sinai entropies taken over all $T$-invariant probability measures.
  In \cite{patrao:entropia}, Patrão noticed that when the dynamical
  system admitted a one point compactification,
  the variational principle still holds as long as we adapt the
  original definition of topological entropy.
  In the present paper, we provide a definition of
  \emph{topological entropy} which extends the previous definitions
  and allows us to prove the variational principle for
  metrizable separable locally compact systems.

  \begin{definition}[Cover Entropy]
    Given a cover $\family{A}$ of a set $X$,
    the \emph{cover entropy of $\family{A}$} is
    \begin{equation*}
      \coverentropy{\family{A}}
      =
      \log \covercardinality{\family{A}}.
    \end{equation*}
  \end{definition}

  Motivated by the definition presented in \cite{patrao:entropia},
  we shall restrict our attention to open covers of a certain type.

  \begin{definition}[Admissible Cover]
    \label{def:cobertura_admissivel}

    In a topological space $\topologicalspace{X}$, an open cover
    $\family{A}$ is said to be \emph{admissible} when at least
    one of its elements have compact complement.
    If every set has compact complement,
    $\family{A}$ is said to be \emph{strongly admissible},
    or \emph{s-admissible} for short.
  \end{definition}

  \begin{obs}
    \label{obs:cobertura_admissivel_vem_da_compactificacao}

    If $\widetilde{X}$
    is a compactification of $X$,
    then, for any admissible cover $\family{A}$ of $X$,
    there exists an open cover $\widetilde{\family{A}}$
    of $\widetilde{X}$, such that
    $\family{A} = X \cap \widetilde{\family{A}}$.
    In fact,
    we might simply take for $\widetilde{\family{A}}$,
    the family of all open sets
    $\widetilde{A} \subset \widetilde{X}$ such that
    $X \cap \widetilde{A} \in \family{A}$.
    Notice that,
    $\widetilde{\family{A}}$ is sure to cover $\widetilde{X}$
    for the simple fact that
    there is a compact set $K \subset X$ such that
    $X \setminus K \in \family{A}$, and therefore
    $\widetilde{X} \setminus K$ belongs to
    $\widetilde{\family{A}}$.
    Since $K$ is covered by elements in
    $\widetilde{\family{A}}$,
    we have that $\widetilde{\family{A}}$
    is a cover for $\widetilde{X}$.
  \end{obs}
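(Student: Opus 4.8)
The plan is to construct $\widetilde{\family{A}}$ explicitly as the largest reasonable candidate and then verify the two properties that matter: that restricting it to $X$ recovers $\family{A}$ exactly, and that it genuinely covers all of $\widetilde{X}$, including the points ``at infinity'' $\widetilde{X} \setminus X$. Concretely, I would set
\begin{equation*}
  \widetilde{\family{A}}
  =
  \setsuchthat{\text{open } \widetilde{A} \subset \widetilde{X}}{X \cap \widetilde{A} \in \family{A}},
\end{equation*}
so that the desired equality $\family{A} = X \cap \widetilde{\family{A}}$ is essentially forced by the definition and only the covering property requires real work.

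First I would check $X \cap \widetilde{\family{A}} = \family{A}$. The inclusion $X \cap \widetilde{\family{A}} \subset \family{A}$ is immediate, since every member of $\widetilde{\family{A}}$ was chosen precisely so that its intersection with $X$ lands in $\family{A}$. For the reverse inclusion I would invoke that $X$ carries the subspace topology inherited from $\widetilde{X}$: each $A \in \family{A}$ is open in $X$, hence $A = X \cap \widetilde{A}$ for some open $\widetilde{A} \subset \widetilde{X}$; such an $\widetilde{A}$ belongs to $\widetilde{\family{A}}$ by definition and restricts to $A$, giving $\family{A} \subset X \cap \widetilde{\family{A}}$.

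The substantive step is showing $\widetilde{\family{A}}$ covers $\widetilde{X}$, and this is exactly where admissibility enters. By admissibility there is some $A_0 \in \family{A}$ whose complement $K = X \setminus A_0$ is compact. Because $\widetilde{X}$ is a compactification and hence Hausdorff, $K$ is compact and therefore closed in $\widetilde{X}$, so $\widetilde{X} \setminus K$ is open; moreover $X \cap (\widetilde{X} \setminus K) = A_0 \in \family{A}$, so $\widetilde{X} \setminus K \in \widetilde{\family{A}}$. This single element already absorbs every point of $\widetilde{X} \setminus X$, since $K \subset X$ forces $\widetilde{X} \setminus X \subset \widetilde{X} \setminus K$. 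The remaining points lie in $X$ and are covered because $\family{A}$ already covers $X$: for $x \in X$ choose $A \in \family{A}$ with $x \in A$, lift it to $\widetilde{A} \in \widetilde{\family{A}}$ with $A = X \cap \widetilde{A} \subset \widetilde{A}$, so $x \in \widetilde{A}$. Combining the two observations shows $\widetilde{\family{A}}$ covers $\widetilde{X}$.

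I expect the main obstacle to be conceptual rather than computational, namely recognizing that admissibility is precisely the hypothesis guaranteeing the points of $\widetilde{X} \setminus X$ get covered. An arbitrary open cover of $X$ need not extend to a cover of $\widetilde{X}$: one could cover $X$ by sets that all stay ``away from infinity,'' and their open extensions in $\widetilde{X}$ could fail to reach $\widetilde{X} \setminus X$. The element $A_0$ with compact complement is exactly the ingredient that rules this out, and the only point-set fact I rely on is that compact subsets of the Hausdorff space $\widetilde{X}$ are closed.
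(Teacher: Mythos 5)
Your proof is correct and takes essentially the same route as the paper's remark: you use the identical construction of $\widetilde{\family{A}}$ as all open $\widetilde{A} \subset \widetilde{X}$ with $X \cap \widetilde{A} \in \family{A}$, and secure the covering of $\widetilde{X} \setminus X$ via the element of $\family{A}$ with compact complement $K$. You merely spell out two points the paper leaves implicit, namely that $K$ is closed in $\widetilde{X}$ because compact subsets of a Hausdorff space are closed, and that the inclusion $\family{A} \subset X \cap \widetilde{\family{A}}$ follows from the subspace topology.
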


  \begin{definition}[Topological Entropy]
    \label{def:tcoverentropy}
    \label{def:topologicalentropy}

    For a dynamical system $\dynamicalsystem{X}{T}$ and a cover
    $\family{A}$, the
    \emph{topological entropy of $T$ with respect to $\family{A}$} is
    \begin{equation*}
      \tcoverentropy{\family{A}}{T}
      =
      \lim_{n \rightarrow \infty}
      \frac{1}{n} \coverentropy{\family{A}^n}.
    \end{equation*}
    The \emph{topological entropy of $T$} is
    \begin{equation*}
      \topologicalentropy{T}
      =
      \sup_{\family{A} \text{: admissible}}
      \tcoverentropy{\family{A}}{T}.
    \end{equation*}
  \end{definition}
  Throughout this paper, the term \emph{AKM entropy} refers to the original
  definition of entropy given by Adler, Konheim and McAndrew,
  while the term \emph{topological entropy} refers to our modified
  definition.

  \begin{obs}
    Notice that as in the case of the AKM entropy and the
    Kolmogorov-Sinai entropy, the limit in Definition
    \ref{def:topologicalentropy}
    exists thanks to the inequality
    \begin{equation*}
      \covercardinality{\family{A} \vee \family{B}}
      \leq
      \covercardinality{\family{A}}
      \covercardinality{\family{B}}
    \end{equation*}
    (see Theorem 4.10 in \cite{walters}).
  \end{obs}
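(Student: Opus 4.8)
The plan is to prove the stated submultiplicativity inequality
$\covercardinality{\family{A} \vee \family{B}} \leq \covercardinality{\family{A}} \covercardinality{\family{B}}$
and then deduce that the sequence $a_n = \coverentropy{\family{A}^n} = \log \covercardinality{\family{A}^n}$ is subadditive, so that the limit defining $\tcoverentropy{\family{A}}{T}$ exists by the standard subadditive limit lemma (as invoked for the AKM and Kolmogorov-Sinai entropies; see \cite{walters}).

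First I would establish the inequality itself. Choose subcovers $\family{A}' \subset \family{A}$ and $\family{B}' \subset \family{B}$ realizing $\covercardinality{\family{A}}$ and $\covercardinality{\family{B}}$ (if either quantity is infinite the inequality is trivial). Then the family $\setsuchthat{A \cap B}{A \in \family{A}', B \in \family{B}', A \cap B \neq \emptyset}$ is a subcover of $\family{A} \vee \family{B}$ of cardinality at most $\covercardinality{\family{A}} \covercardinality{\family{B}}$, which gives the bound. Next I would record two elementary facts about preimages. Since $T^{-n}(A \cap B) = T^{-n}(A) \cap T^{-n}(B)$, the operation $T^{-n}$ distributes over $\vee$, and therefore
\[
  \family{A}^{n+m}
  =
  \family{A}^n \vee T^{-n}(\family{A}^m).
\]
Moreover, for any cover $\family{D}$, the family $T^{-n}(\family{D})$ is again a cover of $X$ (given $x$, pick $D \in \family{D}$ with $T^n x \in D$, so $x \in T^{-n}(D)$), and applying $T^{-n}$ to a minimal subcover of $\family{D}$ yields a subcover of $T^{-n}(\family{D})$ of no larger cardinality, whence $\covercardinality{T^{-n}(\family{D})} \leq \covercardinality{\family{D}}$.

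Combining these observations with the submultiplicativity inequality gives
\[
  \covercardinality{\family{A}^{n+m}}
  \leq
  \covercardinality{\family{A}^n} \, \covercardinality{T^{-n}(\family{A}^m)}
  \leq
  \covercardinality{\family{A}^n} \, \covercardinality{\family{A}^m},
\]
so that $a_{n+m} \leq a_n + a_m$. Since each $a_n \geq 0$, the subadditive limit lemma shows that $\lim_{n \to \infty} \frac{1}{n} a_n$ exists and equals $\inf_n \frac{1}{n} a_n \in [0, +\infty]$, which is exactly the existence claim in Definition \ref{def:topologicalentropy}.

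The \emph{main obstacle} is the monotonicity $\covercardinality{T^{-n}(\family{D})} \leq \covercardinality{\family{D}}$, which must hold without any surjectivity or injectivity hypothesis on $T$; the point is that the preimage of a cover is always a cover, and distinct members of a subcover may collapse under $T^{-n}$ but can never multiply, so the cardinality cannot increase. Everything else is routine bookkeeping with the $\vee$ operation and the usual Fekete argument.
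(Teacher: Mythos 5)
Your proposal is correct and matches the paper's justification, which is simply a citation of the standard subadditivity argument (Theorem 4.10 in \cite{walters}): submultiplicativity of $N$ under $\vee$, the bound $\covercardinality{T^{-n}(\family{D})} \leq \covercardinality{\family{D}}$ for preimages (valid with no surjectivity assumption, exactly as you note), and Fekete's lemma applied to $a_n = \log \covercardinality{\family{A}^n}$. Your parenthetical handling of infinite $\covercardinality{\family{A}}$ and the observation that $\family{A}^{n+m} = \family{A}^n \vee T^{-n}(\family{A}^m)$ cover the only points of care, so nothing is missing.
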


  We now state some very basic properties satisfied
  by the topological entropy.
  Most of the arguments are consequence
  of the following simple lemma.

  \begin{lemma}
    \label{le:entropia:propriedades_elementares}

    Given a dynamical system $\dynamicalsystem{X}{T}$,
    let $\family{A}$ and $\family{B}$ be covers of $X$
    such that $\family{B} \prec \family{A}$.
    Then, for all $k \in \naturals$ and every subset
    $Y \subset X$,
    \begin{enumerate}
      \item
        \label{it:le:entropia:propriedades_elementares:iteracao_do_refinamento}
        $\family{B}^k \prec \family{A}^k$.

      \item
        \label{it:le:entropia:propriedades_elementares:cardinalidade_da_cobertura}
        $\covercardinality[Y]{\family{B}^k} \leq \covercardinality[Y]{\family{A}^k}$.

      \item
        \label{it:le:entropia:propriedades_elementares:pulling_back}
        $\covercardinality[T^{-1}(Y)]{T^{-1}(\family{A})}
        \leq \covercardinality[Y]{\family{A}}$.

      \item
        \label{it:le:entropia:propriedades_elementares:entropia_relativa_a_cobertura}
        $\tcoverentropy{\family{B}}{T}
        \leq \tcoverentropy{\family{A}}{T}$.
    \end{enumerate}
  \end{lemma}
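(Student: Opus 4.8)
The plan is to prove the four items in the stated order, deriving each from the ones before it, with the refinement statement \refitem{it:le:entropia:propriedades_elementares:iteracao_do_refinamento} serving as the foundation.

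First I would record two closure properties of the refinement relation. Pulling back preserves it: if $\family{B} \prec \family{A}$ and $T^{-1}(A) \in T^{-1}(\family{A})$ with $A \subset B$ for some $B \in \family{B}$, then $T^{-1}(A) \subset T^{-1}(B)$, so $T^{-1}(\family{B}) \prec T^{-1}(\family{A})$. The join also preserves it: if $\family{B}_1 \prec \family{A}_1$ and $\family{B}_2 \prec \family{A}_2$, then for a nonempty $A_1 \cap A_2 \in \family{A}_1 \vee \family{A}_2$, choosing $B_i \supset A_i$ gives $A_1 \cap A_2 \subset B_1 \cap B_2$. The one genuine subtlety of the whole lemma hides here: to conclude $B_1 \cap B_2 \in \family{B}_1 \vee \family{B}_2$ one must know $B_1 \cap B_2 \neq \emptyset$, and this holds precisely because $A_1 \cap A_2 \neq \emptyset$ together with $A_1 \cap A_2 \subset B_1 \cap B_2$. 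Iterating the pull-back property gives $T^{-j}(\family{B}) \prec T^{-j}(\family{A})$ for each $j$, and a finite induction on the join property then yields $\family{B}^k \prec \family{A}^k$, which is item \refitem{it:le:entropia:propriedades_elementares:iteracao_do_refinamento}.

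For item \refitem{it:le:entropia:propriedades_elementares:cardinalidade_da_cobertura} I would isolate the general monotonicity of the covering number under refinement: if $\family{D} \prec \family{C}$ then $\covercardinality[Y]{\family{D}} \leq \covercardinality[Y]{\family{C}}$. Take a minimal subcover $\set{C_i \cap Y}$, $i = 1, \dotsc, N$, of $Y \cap \family{C}$ with $N = \covercardinality[Y]{\family{C}}$; choosing $D_i \in \family{D}$ with $C_i \subset D_i$ gives $C_i \cap Y \subset D_i \cap Y$, so $\set{D_i \cap Y}$ is a subcover of $Y \cap \family{D}$ with at most $N$ elements, whence $\covercardinality[Y]{\family{D}} \leq N$. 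Applying this with $\family{C} = \family{A}^k$ and $\family{D} = \family{B}^k$, using item \refitem{it:le:entropia:propriedades_elementares:iteracao_do_refinamento}, gives item \refitem{it:le:entropia:propriedades_elementares:cardinalidade_da_cobertura}.

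Item \refitem{it:le:entropia:propriedades_elementares:pulling_back} is a direct pull-back argument: a minimal subcover $\set{A_i \cap Y}$, $i = 1, \dotsc, N$, of $Y \cap \family{A}$ satisfies $Y \subset \bigcup_i A_i$, hence $T^{-1}(Y) \subset \bigcup_i T^{-1}(A_i)$; since $T^{-1}(A_i) \cap T^{-1}(Y) = T^{-1}(A_i \cap Y)$, the family $\set{T^{-1}(A_i) \cap T^{-1}(Y)}$ is a subcover of $T^{-1}(Y) \cap T^{-1}(\family{A})$ with at most $N$ members. Finally, item \refitem{it:le:entropia:propriedades_elementares:entropia_relativa_a_cobertura} is immediate: taking $Y = X$ in item \refitem{it:le:entropia:propriedades_elementares:cardinalidade_da_cobertura} yields $\covercardinality{\family{B}^n} \leq \covercardinality{\family{A}^n}$, so $\frac{1}{n}\coverentropy{\family{B}^n} \leq \frac{1}{n}\coverentropy{\family{A}^n}$, and passing to the limit gives $\tcoverentropy{\family{B}}{T} \leq \tcoverentropy{\family{A}}{T}$. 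Apart from the nonemptiness check in the join step, every part is a routine \emph{pick a minimal subcover and transport it} argument.
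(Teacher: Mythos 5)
Your proof is correct and complete. The paper gives no proof of this lemma at all (it is stated as a ``simple lemma'' whose verification is left to the reader), and your argument --- first showing that $\prec$ is preserved under pull-back by $T^{-1}$ and under joins, then transporting minimal subcovers along a refinement and along $T^{-1}$, and finally passing the cardinality inequality through the limit --- is precisely the standard verification the authors left implicit; in particular, your explicit check that $B_1 \cap B_2 \neq \emptyset$ (needed because the paper's definition of $\family{A} \vee \family{B}$ discards empty intersections) is the only genuinely non-mechanical point, and you handled it correctly.
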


  When the space is compact,
  it is a simple fact that
  \begin{equation*}
    \topologicalentropy{T^k}
    =
    k \topologicalentropy{T}.
  \end{equation*}
  For the non-compact case,
  only an inequality follows from a similar argument
  (see Remark \ref{obs:sistema_iterado}).

  \begin{lemma}
    \label{lemma:sistema_iterado}
    Consider the dynamical system $\dynamicalsystem{X}{T}$, and let
    $k \in \naturals$. Then,
    \begin{equation*}
      \topologicalentropy{T^k}
      \leq
      k \topologicalentropy{T}.
    \end{equation*}
  \end{lemma}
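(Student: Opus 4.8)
The plan is to bound $\tcoverentropy{\family{A}}{T^k}$ for a single fixed admissible cover $\family{A}$ and then pass to the supremum. The first thing I would notice is that admissibility is a property of the cover alone and makes no reference to the dynamics, so every cover admissible for the system $T^k$ is automatically admissible for $T$. It therefore suffices to establish $\tcoverentropy{\family{A}}{T^k} \leq k \tcoverentropy{\family{A}}{T}$ for each admissible $\family{A}$, and then take the supremum over such $\family{A}$ on both sides.

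To get that inequality I would unwind the definitions and compare two iterated covers. Since $(T^k)^{-j} = T^{-kj}$, we have
\begin{equation*}
  \family{A}_{T^k}^n
  =
  \bigvee_{j=0}^{n-1} T^{-kj}(\family{A}),
  \qquad\text{while}\qquad
  \family{A}_T^{nk}
  =
  \bigvee_{m=0}^{nk-1} T^{-m}(\family{A}).
\end{equation*}
The exponents $0, k, 2k, \dotsc, (n-1)k$ occurring in the first join form a subset of $\set{0, 1, \dotsc, nk-1}$, so $\family{A}_T^{nk}$ is a join over strictly more factors; consequently every element of $\family{A}_T^{nk}$ is contained in some element of $\family{A}_{T^k}^n$. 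In other words $\family{A}_T^{nk}$ refines $\family{A}_{T^k}^n$, that is $\family{A}_{T^k}^n \prec \family{A}_T^{nk}$.

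Next I would invoke the monotonicity of the minimal subcover cardinality under refinement (Lemma \ref{le:entropia:propriedades_elementares}) to conclude $\covercardinality{\family{A}_{T^k}^n} \leq \covercardinality{\family{A}_T^{nk}}$, hence
\begin{equation*}
  \frac{1}{n} \coverentropy{\family{A}_{T^k}^n}
  \leq
  \frac{1}{n} \coverentropy{\family{A}_T^{nk}}
  =
  k \cdot \frac{1}{nk} \coverentropy{\family{A}_T^{nk}}.
\end{equation*}
Letting $n \rightarrow \infty$ gives $\tcoverentropy{\family{A}}{T^k} \leq k \tcoverentropy{\family{A}}{T}$: the right-hand side runs along the subsequence $m = nk$ of the convergent sequence $\frac{1}{m}\coverentropy{\family{A}_T^m}$ and therefore tends to the same limit $\tcoverentropy{\family{A}}{T}$. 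Since $\tcoverentropy{\family{A}}{T} \leq \topologicalentropy{T}$ by definition, we obtain $\tcoverentropy{\family{A}}{T^k} \leq k \topologicalentropy{T}$, and taking the supremum over all admissible $\family{A}$ yields $\topologicalentropy{T^k} \leq k \topologicalentropy{T}$.

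The only genuinely delicate step is the refinement comparison in the second paragraph: one must keep the direction of $\prec$ straight and verify that it is exactly the index inclusion $\set{0, k, \dotsc, (n-1)k} \subset \set{0, \dotsc, nk-1}$ that licenses passing from the $T^k$-join to the finer $T$-join. Everything else is routine bookkeeping with the already-established monotonicity lemma and with the existence of the limit defining $\tcoverentropy{\family{A}}{T}$. I would also remark that this argument gives only one inequality precisely because, in the non-compact setting, one cannot in general recover the reverse bound, which is why the statement is an inequality rather than the equality valid in the compact case.
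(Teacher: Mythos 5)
Your proof is correct and takes essentially the same route as the paper: its identity $(\family{A}_T^k)_{T^k}^n = \family{A}_T^{kn}$ combined with the refinement $\family{A} \prec \family{A}_T^k$ yields exactly your direct comparison $\family{A}_{T^k}^n \prec \family{A}_T^{nk}$, followed by the same appeal to monotonicity of $\covercardinality{\cdot}$ and the same subsequence-limit argument. The only difference is cosmetic --- you merge the paper's two steps into one refinement statement.
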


  \begin{proof}
    Let $\family{A}$ be an admissible cover of $X$.
    Notice that
    $(\family{A}_T^k)_{T^k}^n = \family{A}_T^{kn}$.
    So,
    \begin{equation*}
      k\frac{1}{kn} \coverentropy{\family{A}_T^{kn}}
      =
      \frac{1}{n} \coverentropy{(\family{A}_T^k)_{T^k}^n}.
    \end{equation*}
    Taking the limit for $n \rightarrow \infty$, we have that
    \begin{equation*}
      k \tcoverentropy{\family{A}}{T}
      =
      \tcoverentropy{\family{A}_T^k}{T^k}.
    \end{equation*}
    And since $\family{A} \prec \family{A}_T^k$,
    \begin{equation*}
      \tcoverentropy{\family{A}}{T^k}
      \leq
      \tcoverentropy{\family{A}^k}{T^k}
      =
      k \tcoverentropy{\family{A}}{T}.
    \end{equation*}
    Now, we just have to take the supremum for every admissible cover
    $\family{A}$ to conclude that
    \begin{equation*}
      \topologicalentropy{T^k}
      \leq
      k \topologicalentropy{T}.
    \end{equation*}
  \end{proof}

  \begin{obs}
    \label{obs:sistema_iterado}
    The fact that $\family{A}_T^k$ is not necessarily an
    admissible cover obstructed our way into showing that
    $\topologicalentropy{T^k} = k \topologicalentropy{T}$.
    In the case of Kolmogorov-Sinai entropy for instance,
    the supremum of
    $\tpartitionentropy{\mu}{\family{C}}{T}$
    is taken for every finite measurable partition $\family{C}$.
    In this case, the demonstration of Lemma
    \ref{lemma:sistema_iterado}
    is easily adapted to show that
    $\measureentropy{\mu}{T^k} = k \measureentropy{\mu}{T}$.
    One just has to notice that since $\family{C}_T^k$ is
    a finite measurable partition,
    \begin{equation*}
      k \tpartitionentropy{\mu}{\family{C}}{T}
      =
      \tpartitionentropy{\mu}{\family{C}_T^k}{T^k}
      \leq
      \measureentropy{\mu}{T^k}.
    \end{equation*}
    For locally compact separable metrizable systems,
    the equality
    $\topologicalentropy{T^k} = k \topologicalentropy{T}$
    will follow from the variational principle
    (Corollary
    \ref{corollary:sistema_iterado}).
  \end{obs}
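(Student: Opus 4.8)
The plan is to prove the two inequalities $k\measureentropy{\mu}{T} \le \measureentropy{\mu}{T^k}$ and $\measureentropy{\mu}{T^k} \le k\measureentropy{\mu}{T}$ separately, both resting on a single computation: for every finite measurable partition $\family{C}$,
\begin{equation*}
  \tpartitionentropy{\mu}{\family{C}_T^k}{T^k}
  =
  k\,\tpartitionentropy{\mu}{\family{C}}{T}.
\end{equation*}
This identity comes exactly as in the proof of Lemma \ref{lemma:sistema_iterado}: since $(\family{C}_T^k)_{T^k}^n = \family{C}_T^{kn}$, we have $\frac{1}{n}\partitionentropy{\mu}{(\family{C}_T^k)_{T^k}^n} = k\,\frac{1}{kn}\partitionentropy{\mu}{\family{C}_T^{kn}}$, and letting $n \to \infty$ both sides converge, the right-hand side along the subsequence $m = kn$ of the convergent sequence $\frac{1}{m}\partitionentropy{\mu}{\family{C}_T^m}$, yielding the stated equality. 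Before anything else I would reduce to the case of a probability measure: if $\mu$ is a nonzero finite $T$-invariant measure, Lemma \ref{le:ksentropy_com_medidas_leq_1} lets me rescale to $\mu/\mu(X)$ without affecting the claimed equality, and the zero measure is trivial.

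For the lower bound I would simply observe that $\family{C}_T^k$ is itself a finite measurable partition, hence one of the partitions over which the supremum defining $\measureentropy{\mu}{T^k}$ is taken. Therefore $\tpartitionentropy{\mu}{\family{C}_T^k}{T^k} \le \measureentropy{\mu}{T^k}$, and combining with the identity gives $k\,\tpartitionentropy{\mu}{\family{C}}{T} \le \measureentropy{\mu}{T^k}$ for every $\family{C}$; taking the supremum over $\family{C}$ yields $k\measureentropy{\mu}{T} \le \measureentropy{\mu}{T^k}$. This is precisely the chain of relations displayed in the remark, and it is exactly the step that fails in the topological setting, where $\family{A}_T^k$ need not be an admissible cover and so cannot be fed into the supremum defining $\topologicalentropy{T^k}$.

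For the upper bound I would exploit monotonicity of $\tpartitionentropy{\mu}{\cdot}{T^k}$ under refinement. Since $\family{C} \prec \family{C}_T^k$, the partition $\family{C}_T^k$ refines $\family{C}$, so the conditional entropy $\conditionalpartitionentropy{\mu}{\family{C}}{\family{C}_T^k}$ vanishes, the conditional measure on each cell of $\family{C}_T^k$ being concentrated in a single cell of $\family{C}$. Applying Lemma \ref{lemma:estimation_using_conditional_entropy} to the system $T^k$ with the pair $\family{C}, \family{C}_T^k$ then gives $\tpartitionentropy{\mu}{\family{C}}{T^k} \le \tpartitionentropy{\mu}{\family{C}_T^k}{T^k} = k\,\tpartitionentropy{\mu}{\family{C}}{T} \le k\measureentropy{\mu}{T}$. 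Taking the supremum over all finite measurable partitions $\family{C}$ yields $\measureentropy{\mu}{T^k} \le k\measureentropy{\mu}{T}$, and the two bounds together give equality.

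The main obstacle, and the only genuinely nontrivial input beyond the bookkeeping identity, is the refinement monotonicity used in the upper bound. I would secure it through the conditional-entropy estimate of Lemma \ref{lemma:estimation_using_conditional_entropy} rather than reproving it from scratch, being careful that this lemma is stated for probability measures, which is why the preliminary rescaling via Lemma \ref{le:ksentropy_com_medidas_leq_1} is worthwhile.
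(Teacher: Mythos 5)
Your proposal is correct and follows essentially the same route the remark sketches: the bookkeeping identity $(\family{C}_T^k)_{T^k}^n = \family{C}_T^{kn}$ yields $\tpartitionentropy{\mu}{\family{C}_T^k}{T^k} = k\,\tpartitionentropy{\mu}{\family{C}}{T}$, the lower bound comes from $\family{C}_T^k$ being itself a finite measurable partition (the exact chain displayed in the remark, and the step that fails topologically for admissibility reasons), and the upper bound is the adaptation of Lemma \ref{lemma:sistema_iterado} via refinement monotonicity. Your justification of that monotonicity through the vanishing conditional entropy $\conditionalpartitionentropy{\mu}{\family{C}}{\family{C}_T^k}$ in Lemma \ref{lemma:estimation_using_conditional_entropy}, together with the rescaling to a probability measure via Lemma \ref{le:ksentropy_com_medidas_leq_1}, is just a careful spelling-out of the standard facts the paper leaves implicit.
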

}
  {\subsection{Bowen Entropy}

  Bowen introduced in \cite{bowen:entropia} a definition of entropy
  which coincides with AKM's topological entropy when the dynamical
  system $\dynamicalsystem{X}{T}$ is compact metrizable.
  We shall present Bowen's entropy in a different fashion,
  easier to compare to the topological entropy.

  Choose a distance $d$ in $X$ and,
  given $\varepsilon > 0$, denote by
  \begin{equation*}
    \balls{d}{\varepsilon}
    =
    \setsuchthat{\ball[d]{\varepsilon}{x}}{x \in X}
  \end{equation*}
  the family of balls of radius $\varepsilon$.
  Also, for $n \in \naturals$, we define
  \begin{equation*}
    \iteratedmetric{d}{n}(x,y)
    =
    \max_{0 \leq j < n}
    d(T^j x, T^j y).
  \end{equation*}
  In the literature, $(n,\varepsilon)$-spanning sets are usually defined
  as sets $E \subset X$ for which given any $x \in X$,
  there exists $y \in E$
  such that $\iteratedmetric{d}{n}(x,y) \leq \varepsilon$.
  We adopt an equivalent definition, but in terms of covers.
  We do so,
  in a way that is easier to relate the $(n,\varepsilon)$-spanning sets
  with our enhanced definition of topological entropy.

  \begin{definition}[$(n,\varepsilon)$-Spanning Set]
    Let $\dynamicalsystem{X}{T}$ be a dynamical system with a distance $d$.
    For a given $\varepsilon > 0$ and $n \in \naturals$, a set
    $E \subset X$ is a \emph{$(n,\varepsilon)$-spanning set} when
    \begin{equation*}
      X
      =
      \bigcup_{x \in E}
      \ball[\iteratedmetric{d}{n}]{\varepsilon}{x}.
    \end{equation*}
    That is, the family
    $\setsuchthat{\ball[\iteratedmetric{d}{n}]{\varepsilon}{x}}{x \in E}$
    is a cover for $X$.
  \end{definition}

  \begin{definition}[$d$-Entropy]
    \label{def:d_entropia}

    Let $\dynamicalsystem{X}{T}$ be a dynamical system and $d$ a
    distance for $X$.
    Given $\varepsilon > 0$ and $n \in \naturals$, define
    \begin{equation*}
      \epsilondentropy{d}{T}{\varepsilon}
      =
      \lim_{n \rightarrow \infty}
      \frac{1}{n}
       \log \covercardinality{\balls{d_n}{\varepsilon}},
    \end{equation*}
    and
    \begin{equation*}
      \dentropy{d}{T}
      =
      \sup_{\varepsilon > 0}
      \epsilondentropy{d}{T}{\varepsilon}.
    \end{equation*}
    We denote the usual Bowen entropy by
    \begin{equation*}
      \bowenentropy{d}{T}
      =
      \sup_{\substack{\varepsilon > 0 \\ K \text{: compact}}}
      \limsup_{n \rightarrow \infty}
      \frac{1}{n}
      \log \covercardinality[K]{\balls{d_n}{\varepsilon}}.
    \end{equation*}
  \end{definition}

  \begin{obs}
    \label{obs:kbowenentropy:convergence}

    Again, just like in the case of the topological entropy and the
    Kolmogorov-Sinai entropy, the limit in Definition
    \ref{def:d_entropia}
    exists thanks to the inequality
    \begin{equation*}
      \covercardinality{\balls{d_n}{\varepsilon}}
      \leq
      \covercardinality{\balls{d_q}{\varepsilon}}
      \covercardinality{\balls{d_{n-q}}{\varepsilon}}
    \end{equation*}
    for all $q \in \naturals$ such that $0 < q < n$
    (see Theorem 4.10 in \cite{walters}).
  \end{obs}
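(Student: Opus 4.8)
The plan is to show that the sequence $a_n = \log \covercardinality{\balls{d_n}{\varepsilon}}$ is subadditive, i.e.\ $a_n \leq a_q + a_{n-q}$ for $0 < q < n$, and then to invoke the standard subadditivity lemma for real sequences (see \cite{walters}), which guarantees that $\lim_{n \to \infty} a_n / n$ exists in $[0, +\infty]$ and in fact equals $\inf_n a_n / n$, exactly as for the topological and Kolmogorov--Sinai entropies. Thus everything reduces to the displayed submultiplicative inequality $\covercardinality{\balls{d_n}{\varepsilon}} \leq \covercardinality{\balls{d_q}{\varepsilon}} \covercardinality{\balls{d_{n-q}}{\varepsilon}}$.

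The starting point is an elementary identity for the iterated metric. Splitting the maximum in $\iteratedmetric{d}{n}(x,y) = \max_{0 \leq j < n} d(T^j x, T^j y)$ at the index $q$ and re-indexing the second block by $i = j - q$ gives $\iteratedmetric{d}{n}(x,y) = \max\left(\iteratedmetric{d}{q}(x,y),\, \iteratedmetric{d}{n-q}(T^q x, T^q y)\right)$. Consequently every $d_n$-ball factors as
\begin{equation*}
  \ball[\iteratedmetric{d}{n}]{\varepsilon}{x}
  =
  \ball[\iteratedmetric{d}{q}]{\varepsilon}{x}
  \cap
  T^{-q}\left(\ball[\iteratedmetric{d}{n-q}]{\varepsilon}{T^q x}\right).
\end{equation*}
This exhibits $\balls{d_n}{\varepsilon}$ as the ``tied diagonal'' of the join $\balls{d_q}{\varepsilon} \vee T^{-q}(\balls{d_{n-q}}{\varepsilon})$, the only extra constraint being that the center of the second factor is forced to be $T^q$ of the center of the first.

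To produce a small cover I would take minimal subcovers of $\balls{d_q}{\varepsilon}$ and of $\balls{d_{n-q}}{\varepsilon}$, with center sets $A$ and $B$ of cardinalities $\covercardinality{\balls{d_q}{\varepsilon}}$ and $\covercardinality{\balls{d_{n-q}}{\varepsilon}}$. Given $z \in X$, pick $a \in A$ with $z \in \ball[\iteratedmetric{d}{q}]{\varepsilon}{a}$ and $b \in B$ with $T^q z \in \ball[\iteratedmetric{d}{n-q}]{\varepsilon}{b}$; then $z$ lies in $\ball[\iteratedmetric{d}{q}]{\varepsilon}{a} \cap T^{-q}(\ball[\iteratedmetric{d}{n-q}]{\varepsilon}{b})$. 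Hence the at most $\covercardinality{\balls{d_q}{\varepsilon}} \covercardinality{\balls{d_{n-q}}{\varepsilon}}$ sets of this product form cover $X$. This is precisely the submultiplicativity of $\covercardinality{\cdot}$ under $\vee$ recorded earlier (Theorem 4.10 in \cite{walters}), combined with the pull-back estimate of Lemma \ref{le:entropia:propriedades_elementares} to absorb the $T^{-q}$; it yields the bound for the join $\balls{d_q}{\varepsilon} \vee T^{-q}(\balls{d_{n-q}}{\varepsilon})$.

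The main obstacle is closing the gap between this join and $\balls{d_n}{\varepsilon}$ itself. The product sets just constructed belong to $\balls{d_q}{\varepsilon} \vee T^{-q}(\balls{d_{n-q}}{\varepsilon})$ and are generally \emph{not} members of $\balls{d_n}{\varepsilon}$, while the factorization identity only gives $\balls{d_q}{\varepsilon} \vee T^{-q}(\balls{d_{n-q}}{\varepsilon}) \prec \balls{d_n}{\varepsilon}$, so $\balls{d_n}{\varepsilon}$ is the \emph{finer} cover and the cardinality comparison of Lemma \ref{le:entropia:propriedades_elementares} runs in the unhelpful direction. The delicate point, therefore, is to extract an honest subcover \emph{by genuine $d_n$-balls} of the same size, which is exactly where the dynamical tie $b = T^q a$ must be exploited rather than treating the two factors as independent; a naive choice of a single center inside each product set only controls the radius up to a factor of two. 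Once the subadditive inequality $a_n \leq a_q + a_{n-q}$ is secured in this way, the existence of $\lim_{n\to\infty} a_n/n$ is immediate, completing the remark.
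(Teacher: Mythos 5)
Your reduction to the standard subadditivity lemma and the factorization $\ball[\iteratedmetric{d}{n}]{\varepsilon}{x} = \ball[\iteratedmetric{d}{q}]{\varepsilon}{x} \cap T^{-q}\left(\ball[\iteratedmetric{d}{n-q}]{\varepsilon}{T^q x}\right)$ are both correct, and you have put your finger on exactly the point that the Remark's citation of Theorem 4.10 of \cite{walters} glosses over: that theorem controls the minimal subcover of the join $\balls{\iteratedmetric{d}{q}}{\varepsilon} \vee T^{-q}\left(\balls{\iteratedmetric{d}{n-q}}{\varepsilon}\right)$, while $\balls{\iteratedmetric{d}{n}}{\varepsilon}$ is only its ``diagonal'' subfamily (centers tied by $b = T^q a$), and shrinking the family from which subcovers may be drawn can only increase $\covercardinality{\cdot}$. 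The problem is that your argument stops exactly where the Remark needs it to begin: the step you defer --- extracting a subcover by genuine $\iteratedmetric{d}{n}$-balls of radius $\varepsilon$ ``by exploiting the dynamical tie'' --- is never carried out, and no such step exists. A nonempty set $\ball[\iteratedmetric{d}{q}]{\varepsilon}{a} \cap T^{-q}\left(\ball[\iteratedmetric{d}{n-q}]{\varepsilon}{b}\right)$ has $\iteratedmetric{d}{n}$-diameter less than $2\varepsilon$, which in a general metric space does not place it inside any ball of radius $\varepsilon$; what your construction honestly proves is
\begin{equation*}
  \covercardinality{\balls{\iteratedmetric{d}{n}}{2\varepsilon}}
  \leq
  \covercardinality{\balls{\iteratedmetric{d}{q}}{\varepsilon}}
  \covercardinality{\balls{\iteratedmetric{d}{n-q}}{\varepsilon}},
\end{equation*}
with an unavoidable doubling of the radius.

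The gap cannot be closed, because the displayed inequality of the Remark is false as stated. Take $X = \set{-1, 0, 1} \subset \reals$ with the Euclidean distance, $T(0) = -1$, $T(-1) = 1$, $T(1) = 1$ (a continuous map on a compact metric space), and $\varepsilon = 3/2$, $n = 2$, $q = 1$. Then $\ball[d]{3/2}{0} = X$, so $\covercardinality{\balls{d}{3/2}} = 1$; on the other hand $\iteratedmetric{d}{2}(0,-1) = \iteratedmetric{d}{2}(0,1) = \iteratedmetric{d}{2}(-1,1) = 2$, so every member of $\balls{\iteratedmetric{d}{2}}{3/2}$ is a singleton and $\covercardinality{\balls{\iteratedmetric{d}{2}}{3/2}} = 3 > 1 = \covercardinality{\balls{d}{3/2}}^2$. (Note that the paper itself offers no proof beyond the citation, so your diagnosis of the obstruction is sharper than the Remark's own justification; this is precisely why Bowen and Walters define spanning-set entropy with a $\limsup$ rather than a limit at fixed $\varepsilon$.) The harmless repair, for your argument and for the Remark alike, is to read the limit in Definition \ref{def:d_entropia} as a $\limsup$: by the sandwich $\left[\balls{d}{\varepsilon}\right]^n \prec \balls{\iteratedmetric{d}{n}}{\varepsilon} \prec \left[\balls{d}{\frac{\varepsilon}{2}}\right]^n$ of Lemma \ref{le:xbowen_com_tcoverentropy}, one has $\tcoverentropy{\balls{d}{\varepsilon}}{T} \leq \liminf_n \frac{1}{n}\log\covercardinality{\balls{\iteratedmetric{d}{n}}{\varepsilon}} \leq \limsup_n \frac{1}{n}\log\covercardinality{\balls{\iteratedmetric{d}{n}}{\varepsilon}} \leq \tcoverentropy{\balls{d}{\frac{\varepsilon}{2}}}{T}$, where the two outer quantities are genuine limits; hence $\sup_{\varepsilon > 0}$ of the $\limsup$ and of the $\liminf$ coincide, $\dentropy{d}{T}$ is well defined either way, and everything downstream in the paper is unaffected.
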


  An alternative way to characterize the $d$-entropy
  (and Bowen's entropy) is by means of \emph{separated sets}.
  Misiurewicz's proof of the variational principle needs this characterization.

  Let $\dynamicalsystem{X}{T}$ be a dynamical system with distance $d$.
  Given $\varepsilon > 0$ and $n \in \naturals$,
  we say that a set $S \subset X$ is
  \emph{$(n,\varepsilon)$-separated} when for all pairs of distinct
  points $x,y \in S$,
  $\iteratedmetric{d}{n}(x,y) \geq \varepsilon$.
  For a subset $Y \subset X$ and $\varepsilon > 0$,
  let us write
  $\separatedcardinality{n}{\varepsilon}{Y}$ for the supremum amongst
  the cardinalities of $(n,\varepsilon)$-separated subsets of $Y$.

  \begin{lemma}
    \label{le:separado_e_gerador}
    In a dynamical system $\dynamicalsystem{X}{T}$,
    where $\metricspace{X}{d}$ is a metric space,
    given a subset $Y \subset X$ and $\varepsilon > 0$,
    then, for all $n \in \naturals$,
    \begin{equation*}
      \covercardinality[Y]{\balls{\iteratedmetric{d}{n}}{\varepsilon}}
      \leq
      \separatedcardinality{n}{\varepsilon}{Y}
      \leq
      \covercardinality[Y]{\balls{\iteratedmetric{d}{n}}{\frac{\varepsilon}{2}}}.
    \end{equation*}
  \end{lemma}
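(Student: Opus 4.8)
The plan is to prove the two inequalities separately, both of which rest only on the fact that $\iteratedmetric{d}{n}$ is a genuine metric. Indeed, since $d$ is a metric and each $T^j$ is continuous, every map $(x,y) \mapsto d(T^j x, T^j y)$ is a pseudometric, so $\iteratedmetric{d}{n} = \max_{0 \le j < n} d(T^j \cdot, T^j \cdot)$ is a pseudometric; because the $j = 0$ term is $d$ itself, $\iteratedmetric{d}{n}$ separates points and is therefore a metric. In particular it satisfies the triangle inequality, which is the one analytic ingredient the whole argument needs. Throughout I read $\ball[\iteratedmetric{d}{n}]{\varepsilon}{x}$ as the open ball $\setsuchthat{y}{\iteratedmetric{d}{n}(x,y) < \varepsilon}$, matching the paper's convention, so that $y \notin \ball[\iteratedmetric{d}{n}]{\varepsilon}{x}$ is exactly the statement $\iteratedmetric{d}{n}(x,y) \ge \varepsilon$.

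For the left inequality $\covercardinality[Y]{\balls{\iteratedmetric{d}{n}}{\varepsilon}} \le \separatedcardinality{n}{\varepsilon}{Y}$, I would first dispose of the case $\separatedcardinality{n}{\varepsilon}{Y} = \infty$, where there is nothing to prove. Otherwise the supremum is a finite integer attained by some $(n,\varepsilon)$-separated set $S \subset Y$, and a set of maximum cardinality is necessarily maximal, in the sense that no point of $Y$ can be adjoined to it. I then claim the balls $\set{\ball[\iteratedmetric{d}{n}]{\varepsilon}{x} : x \in S}$ cover $Y$: if some $y \in Y$ lay outside all of them, then $\iteratedmetric{d}{n}(x,y) \ge \varepsilon$ for every $x \in S$, and also $y \notin S$ since $y \in \ball[\iteratedmetric{d}{n}]{\varepsilon}{y}$, so $S \cup \set{y}$ would be $(n,\varepsilon)$-separated, contradicting maximality. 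Intersecting these balls with $Y$ exhibits a subcover of $Y \cap \balls{\iteratedmetric{d}{n}}{\varepsilon}$ of cardinality at most $\abs{S} = \separatedcardinality{n}{\varepsilon}{Y}$, which gives the bound.

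For the right inequality $\separatedcardinality{n}{\varepsilon}{Y} \le \covercardinality[Y]{\balls{\iteratedmetric{d}{n}}{\frac{\varepsilon}{2}}}$, I would fix an arbitrary $(n,\varepsilon)$-separated set $S \subset Y$ and an arbitrary subcover $\mathcal{U}$ of $Y \cap \balls{\iteratedmetric{d}{n}}{\frac{\varepsilon}{2}}$. The key observation is that no element of $\mathcal{U}$ contains two distinct points of $S$: if $x, y \in S$ both belonged to a single $\ball[\iteratedmetric{d}{n}]{\varepsilon/2}{z}$, the triangle inequality for $\iteratedmetric{d}{n}$ would force $\iteratedmetric{d}{n}(x,y) < \varepsilon/2 + \varepsilon/2 = \varepsilon$, contradicting $(n,\varepsilon)$-separation. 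Since $\mathcal{U}$ covers $Y \supseteq S$, choosing for each $p \in S$ a member of $\mathcal{U}$ that contains it defines an injection $S \hookrightarrow \mathcal{U}$, whence $\abs{S} \le \abs{\mathcal{U}}$; as $\mathcal{U}$ was arbitrary, $\abs{S} \le \covercardinality[Y]{\balls{\iteratedmetric{d}{n}}{\frac{\varepsilon}{2}}}$, and taking the supremum over all separated sets $S$ yields the claim.

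I do not expect a serious obstacle here, since the content is entirely combinatorial once the triangle inequality for $\iteratedmetric{d}{n}$ is in hand. The only points demanding care are bookkeeping ones: treating $\separatedcardinality{n}{\varepsilon}{Y} = \infty$ separately so that ``maximum cardinality'' is meaningful; keeping the open-ball convention aligned with the non-strict definition of separation, so that the two strict inequalities produced by a single radius-$\frac{\varepsilon}{2}$ ball genuinely beat the threshold $\varepsilon$; and remembering that the balls are centred at arbitrary points of $X$ while the covers and separated sets live in $Y$, so that each family we produce is a legitimate subcover of $Y \cap \balls{\iteratedmetric{d}{n}}{\varepsilon}$ rather than of $X$.
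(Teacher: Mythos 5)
Your proposal is correct and follows essentially the same route as the paper: a maximal $(n,\varepsilon)$-separated subset of $Y$ yields a cover by $\varepsilon$-balls for the first inequality, and an injection of any separated set into a radius-$\frac{\varepsilon}{2}$ subcover via the triangle inequality for $\iteratedmetric{d}{n}$ gives the second. The only cosmetic difference is that you obtain maximality from a maximum-cardinality set (treating the infinite case separately), where the paper invokes Zorn's lemma; both are fine.
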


  \begin{proof}
    The first inequality follows by the following claim
    and by the existence (through Zorn's lema) of maximal
    $(n,\varepsilon)$-separated sets.
    \subproof
    {
      If $E \subset Y$ is a maximal $(n,\varepsilon)$-separated subset
      of $Y$, then
      \begin{equation*}
        \family{B}_E
        =
        \setsuchthat{Y \cap \ball[\iteratedmetric{d}{n}]{\varepsilon}{x}}{x \in E}
      \end{equation*}
      is a cover of $Y$.
    }
    {
      If $\family{B}_E$ is not a cover, then, taking
      $y \in Y \setminus \bigcup_{x \in E} \ball[\iteratedmetric{d}{n}]{\varepsilon}{x}$,
      we have that the set $E \cup \set{y}$ is
      $(n,\varepsilon)$-separated, infringing the maximality of the
      set $E$.
    }

    For the second inequality,
    if $S \subset Y$ is a $(n,\varepsilon)$-separated set, and
    $\family{B} \subset
    \balls{\iteratedmetric{d}{n}}{\frac{\varepsilon}{2}}$
    covering $Y$, then
    for each $s \in S$,
    there exists an $e(s) \in \family{B}$ such that
    $s \in e(s)$.
    This mapping is injective.
    In fact, if $e(s_1) = e(s_2)$, then
    $\iteratedmetric{d}{n}(s_1, s_2) < \varepsilon$.
    And since $S$ is $(n,\varepsilon)$-separated,
    we must have $s_1 = s_2$.
  \end{proof}

  The following proposition characterizes the d-entropy
  in terms of separated sets,
  and also shows that our formulation of Bowen's entropy
  (Definition \ref{def:d_entropia})
  is equivalent to that of Bowen himself.

  \begin{proposition}
    \label{prop:entropia_com_conjuntos_separados}
    For a dynamical system $\dynamicalsystem{X}{T}$,
    where $\metricspace{X}{d}$ is a metric space,
    \begin{align*}
      \dentropy{d}{T}
      &=
      \sup_{\varepsilon > 0}
      \lim_{n \rightarrow \infty}
      \frac{1}{n} \log \separatedcardinality{n}{\varepsilon}{X}
      \\
      \bowenentropy{d}{T}
      &=
      \sup_{\substack{\varepsilon > 0 \\ K \text{: compact}}}
      \lim_{n \rightarrow \infty}
      \frac{1}{n} \log \separatedcardinality{n}{\varepsilon}{K}.
    \end{align*}
  \end{proposition}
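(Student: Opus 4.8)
The plan is to derive both identities from the two-sided estimate of Lemma \ref{le:separado_e_gerador}, which traps the separated-set count between two covering numbers, and then to let the supremum over the scale $\varepsilon$ close the resulting gap.

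First, for the $d$-entropy I would specialize Lemma \ref{le:separado_e_gerador} to $Y = X$, so that for every $\varepsilon > 0$ and $n \in \naturals$
\[
  \covercardinality{\balls{\iteratedmetric{d}{n}}{\varepsilon}}
  \leq
  \separatedcardinality{n}{\varepsilon}{X}
  \leq
  \covercardinality{\balls{\iteratedmetric{d}{n}}{\frac{\varepsilon}{2}}}.
\]
Taking logarithms, dividing by $n$, and using that the limit defining $\epsilondentropy{d}{T}{\varepsilon}$ exists (Remark \ref{obs:kbowenentropy:convergence}), the sandwich gives
\[
  \epsilondentropy{d}{T}{\varepsilon}
  \leq
  \liminf_{n \to \infty} \frac{1}{n} \log \separatedcardinality{n}{\varepsilon}{X}
  \leq
  \limsup_{n \to \infty} \frac{1}{n} \log \separatedcardinality{n}{\varepsilon}{X}
  \leq
  \epsilondentropy{d}{T}{\frac{\varepsilon}{2}}.
\]
I would then take the supremum over $\varepsilon > 0$. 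By definition the leftmost supremum is $\dentropy{d}{T}$, and after the harmless reindexing $\varepsilon/2 \mapsto \varepsilon$ the rightmost one equals $\dentropy{d}{T}$ as well; hence the two middle quantities are squeezed to $\dentropy{d}{T}$, which is the first identity.

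For Bowen's entropy the argument runs the same way with $Y = K$ ranging over the compact subsets of $X$, with one difference: since $K$ need not be forward invariant, the subadditivity behind Remark \ref{obs:kbowenentropy:convergence} is unavailable, so only the $\limsup$ of $\frac{1}{n}\log \covercardinality[K]{\balls{\iteratedmetric{d}{n}}{\varepsilon}}$ is controlled --- which is exactly why $\bowenentropy{d}{T}$ is defined through a $\limsup$. Applying $\limsup_{n}$ across the sandwich and taking the supremum over all pairs $(\varepsilon, K)$ traps the separated quantity between two copies of $\bowenentropy{d}{T}$, again after the reindexing $\varepsilon/2 \mapsto \varepsilon$, giving the second identity.

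The subtle point, and the main obstacle, is the plain $\lim$ written in the statement: the sandwich only bounds $\liminf$ and $\limsup$ separately, and for a fixed $\varepsilon$ the limit of $\frac{1}{n}\log \separatedcardinality{n}{\varepsilon}{\cdot}$ need not exist. This is resolved precisely by the supremum over $\varepsilon$ (and over $K$): because the lower bound read at scale $\varepsilon$ and the upper bound read at scale $\varepsilon/2$ have identical suprema, the gap between $\liminf$ and $\limsup$ is annihilated once the supremum is taken, so the displayed formulas hold under either interpretation of the inner limiting operation.
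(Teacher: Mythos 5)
Your proof is correct and takes essentially the same route as the paper's: the paper likewise derives both identities immediately from Lemma \ref{le:separado_e_gerador}, taking logarithms, dividing by $n$, passing to the limit in $n$, and then taking the supremum over $\varepsilon$ (and additionally over compact $K$ for the Bowen case). Your closing discussion of the $\liminf$/$\limsup$ subtlety is a careful refinement of a point the paper glosses over --- its proof simply says ``take the limit'' --- but it does not change the argument.
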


  \begin{proof}
    It is immediate from Lemma \ref{le:separado_e_gerador}.
    Just take the $\log$, divide by $n$, take the limit for
    $n \rightarrow \infty$ and the supremum for $\varepsilon > 0$ in
    \begin{equation*}
      \covercardinality{\balls{\iteratedmetric{d}{n}}{\varepsilon}}
      \leq
      \separatedcardinality{n}{\varepsilon}{X}
      \leq
      \covercardinality{\balls{\iteratedmetric{d}{n}}{\frac{\varepsilon}{2}}}.
    \end{equation*}
    And for the Bowen entropy,
    do the same and also take the supremum over compact $K \subset X$ for
    \begin{equation*}
      \covercardinality[K]{\balls{\iteratedmetric{d}{n}}{\varepsilon}}
      \leq
      \separatedcardinality{n}{\varepsilon}{K}
      \leq
      \covercardinality[K]{\balls{\iteratedmetric{d}{n}}{\frac{\varepsilon}{2}}}.
    \end{equation*}
  \end{proof}

  When defining the $d$-entropy,
  we have used the families $\balls{\iteratedmetric{d}{n}}{\varepsilon}$.
  Notice that those families are not the same as
  $\left[\balls{d}{\varepsilon}\right]^n$.
  The following lemma shows that the families
  $\left[\balls{d}{\varepsilon}\right]^n$
  would work as well, making the $d$-entropy and
  the topological entropy much easier to compare.

  \begin{lemma}
    \label{le:xbowen_com_tcoverentropy}
    Let $\dynamicalsystem{X}{T}$ be a dynamical system with distance $d$.
    Then,
    \begin{align*}
      \dentropy{d}{T}
      &=
      \sup_{\varepsilon > 0} \tcoverentropy{\balls{d}{\varepsilon}}{T}
      \\
      \bowenentropy{d}{T}
      &=
      \sup_{\substack{\varepsilon > 0 \\ K \text{: compact}}}
      \limsup_{n \rightarrow \infty}
      \frac{1}{n}
      \log \covercardinality[K]{\balls{d}{\varepsilon}^n}.
    \end{align*}
  \end{lemma}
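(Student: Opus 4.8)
The plan is to compare the two ball families that appear on either side of each identity. The essential observation is the elementary set equality
\begin{equation*}
  \ball[\iteratedmetric{d}{n}]{\varepsilon}{x}
  =
  \bigcap_{j=0}^{n-1} T^{-j}\left(\ball[d]{\varepsilon}{T^j x}\right),
\end{equation*}
which follows at once from $\iteratedmetric{d}{n}(x,y) = \max_{0 \le j < n} d(T^j x, T^j y)$. This displays each $\iteratedmetric{d}{n}$-ball of radius $\varepsilon$ as one of the non-empty intersections $A_0 \cap T^{-1}(A_1) \cap \dotsb \cap T^{-(n-1)}(A_{n-1})$, with $A_j \in \balls{d}{\varepsilon}$, that make up the iterated cover $\balls{d}{\varepsilon}^n$ (here one chooses the centres along the orbit, $A_j = \ball[d]{\varepsilon}{T^j x}$). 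In short, $\balls{\iteratedmetric{d}{n}}{\varepsilon} \subset \balls{d}{\varepsilon}^n$ as families of sets. I intend to turn this into a two-sided estimate on covering numbers and then pass to the limit.

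First I would use the inclusion $\balls{\iteratedmetric{d}{n}}{\varepsilon} \subset \balls{d}{\varepsilon}^n$ directly: any subfamily of $\balls{\iteratedmetric{d}{n}}{\varepsilon}$ that covers a set $Y$ is also a subfamily of $\balls{d}{\varepsilon}^n$ covering $Y$, whence $\covercardinality[Y]{\balls{d}{\varepsilon}^n} \le \covercardinality[Y]{\balls{\iteratedmetric{d}{n}}{\varepsilon}}$. For the opposite direction I would prove the reverse refinement: given a non-empty $A = \bigcap_{j=0}^{n-1} T^{-j}(\ball[d]{\varepsilon}{x_j}) \in \balls{d}{\varepsilon}^n$, fix $z \in A$; then for every $y \in A$ and every $0 \le j < n$ the triangle inequality gives $d(T^j y, T^j z) \le d(T^j y, x_j) + d(x_j, T^j z) < 2\varepsilon$, so that $A \subset \ball[\iteratedmetric{d}{n}]{2\varepsilon}{z}$. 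Hence $\balls{d}{\varepsilon}^n$ refines $\balls{\iteratedmetric{d}{n}}{2\varepsilon}$, and item \refitem{it:le:entropia:propriedades_elementares:cardinalidade_da_cobertura} of Lemma \ref{le:entropia:propriedades_elementares} gives $\covercardinality[Y]{\balls{\iteratedmetric{d}{n}}{2\varepsilon}} \le \covercardinality[Y]{\balls{d}{\varepsilon}^n}$. Together these yield
\begin{equation*}
  \covercardinality[Y]{\balls{\iteratedmetric{d}{n}}{2\varepsilon}}
  \le
  \covercardinality[Y]{\balls{d}{\varepsilon}^n}
  \le
  \covercardinality[Y]{\balls{\iteratedmetric{d}{n}}{\varepsilon}},
\end{equation*}
for every $Y \subset X$, in particular for $Y = X$ and for $Y = K$ compact.

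It then remains to take logarithms, divide by $n$, and pass to the limit. For $Y = X$ all three limits exist (by the remarks following Definitions \ref{def:topologicalentropy} and \ref{def:d_entropia}), so the chain becomes $\epsilondentropy{d}{T}{2\varepsilon} \le \tcoverentropy{\balls{d}{\varepsilon}}{T} \le \epsilondentropy{d}{T}{\varepsilon}$; taking the supremum over $\varepsilon > 0$ and using that $\varepsilon \mapsto 2\varepsilon$ is a bijection of $(0,\infty)$ collapses both outer suprema to $\dentropy{d}{T}$, which proves the first equality. For the second I would instead take the $\limsup$ in $n$ of the relativised chain with $Y = K$ and then the supremum over $\varepsilon > 0$ and compact $K$; the same reparametrisation of the radius collapses the outer terms to $\bowenentropy{d}{T}$ and gives the second equality. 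I expect the only delicate point to be the bookkeeping of the factor $2$ in the radius and the verification that the supremum over $\varepsilon$ genuinely absorbs it; the set-theoretic comparison of the two families, encapsulated in the displayed identity, is the conceptual heart of the argument and is otherwise routine.
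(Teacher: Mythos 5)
Your proposal is correct and takes essentially the same route as the paper's own proof: both rest on the sandwich $\left[\balls{d}{\varepsilon}\right]^n \prec \balls{\iteratedmetric{d}{n}}{\varepsilon} \prec \left[\balls{d}{\frac{\varepsilon}{2}}\right]^n$ (you write the second refinement with the radius doubled, $\balls{d}{\varepsilon}^n \prec \balls{\iteratedmetric{d}{n}}{2\varepsilon}$, which is the same statement reparametrised), yielding the two-sided bound on $\covercardinality[Y]{\cdot}$ for arbitrary $Y \subset X$, specialised to $Y = X$ and $Y = K$ compact. Your concluding passage to limits, limsups and suprema, with the factor $2$ absorbed by the supremum over $\varepsilon$, is exactly the paper's final step.
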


  \begin{proof}
    It is enough to show that
    \begin{equation*}
      \left[ \balls{d}{\varepsilon} \right]^n
      \prec
      \balls{\iteratedmetric{d}{n}}{\varepsilon}
      \prec
      \left[ \balls{d}{\frac{\varepsilon}{2}} \right]^n.
    \end{equation*}
    Indeed, this would imply that for any $Y \subset X$,
    \begin{equation*}
      \frac{1}{n}
      \log \covercardinality[Y]{\left[ \balls{d}{\varepsilon} \right]^n}
      \leq
      \frac{1}{n}
      \log \covercardinality[Y]{\balls{\iteratedmetric{d}{n}}{\varepsilon}}
      \leq
      \frac{1}{n}
      \log \covercardinality[Y]{\left[ \balls{d}{\frac{\varepsilon}{2}} \right]^n}.
    \end{equation*}
    And therefore,
    the claim for the first equality will follow
    if we make $n \rightarrow \infty$ and take the
    supremum for $\varepsilon > 0$ and $Y = X$.
    For the second equality,
    instead of taking $Y = X$,
    we also have to take the supremum for every
    $Y \subset X$ compact.

    It is immediate that
    \begin{equation*}
      \balls{\iteratedmetric{d}{n}}{\varepsilon}
      \subset
      \left[ \balls{d}{\varepsilon} \right]^n,
    \end{equation*}
    since every ball in the distance $\iteratedmetric{d}{n}$ has the form
    \begin{equation*}
      \ball[d]{\varepsilon}{x}
      \cap \dotsb \cap
      T^{-(n-1)} \ball[d]{\varepsilon}{T^{n-1}x}.
    \end{equation*}
    Therefore,
    $\left[ \balls{d}{\varepsilon} \right]^n
    \prec
    \balls{\iteratedmetric{d}{n}}{\varepsilon}$.
    On the other hand, a set
    $A \in \left[\balls{d}{\frac{\varepsilon}{2}}\right]^n$ has the form
    \begin{equation*}
      A
      =
      \ball[d]{\frac{\varepsilon}{2}}{x_0}
      \cap \dotsb \cap
      T^{-(n-1)} \ball[d]{\frac{\varepsilon}{2}}{x_{n-1}}.
    \end{equation*}
    In particular, if $A \neq \emptyset$,
    by taking $x \in A$,
    we have that for $j = 0, \dotsc, n - 1$,
    $d(T^j x, x_j) < \frac{\varepsilon}{2}$.  So,
    \begin{equation*}
      \ball[d]{\frac{\varepsilon}{2}}{x_j}
      \subset
      \ball[d]{\varepsilon}{T^j x}.
    \end{equation*}
    Therefore,
    \begin{equation*}
      A
      \subset
      \ball[d]{\varepsilon}{x}
      \cap \dotsb \cap
      T^{-(n-1)} \ball[d]{\varepsilon}{T^{n-1} x}
      \in
      \balls{\iteratedmetric{d}{n}}{\varepsilon}.
    \end{equation*}
  \end{proof}
}
  {  The following lemma states the existence of the
  \emph{Lebesgue number} in a form which is easy to apply
  to the construction of refinements for a given cover.

  \begin{lemma}[Lebesgue Number]
    \label{le:numero_de_Lebesgue}
    Suppose $\metricspace{X}{d}$ is a metric space which admits a
    compactification
    $\metricspace{\widetilde{X}}{\widetilde{d}\phantom{.}}$.
    Let $\family{A} = X \cap \widetilde{\family{A}}$,
    where $\widetilde{\family{A}}$ is an open cover of
    $\widetilde{X}$.
    Then, there exists $\varepsilon > 0$ such that
    \begin{equation*}
      \family{A}
      \prec
      \balls{d}{\varepsilon}.
    \end{equation*}
  \end{lemma}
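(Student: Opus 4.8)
The plan is to reduce the statement to the classical Lebesgue number lemma for the \emph{compact} space $\widetilde{X}$ and then transport the resulting number back down to $X$. Recall that the ordering $\family{A} \prec \balls{d}{\varepsilon}$ means that $\balls{d}{\varepsilon}$ refines $\family{A}$; concretely, I must exhibit a single $\varepsilon > 0$ such that for every $x \in X$ the ball $\ball[d]{\varepsilon}{x}$ is contained in some member of $\family{A}$.

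First I would record the bridge between the two spaces. Since $\metricspace{\widetilde{X}}{\widetilde{d}\phantom{.}}$ is a compactification of $\metricspace{X}{d}$, the distance $d$ is just the restriction of $\widetilde{d}$ to $X$. Consequently, for any $x \in X$ and any radius $r > 0$,
\begin{equation*}
  \ball[d]{r}{x}
  =
  X \cap \ball[\widetilde{d}]{r}{x},
\end{equation*}
where the latter ball is taken in $\widetilde{X}$. This identity is exactly what lets a containment established upstairs in $\widetilde{X}$ descend to $X$.

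Next I would apply the standard Lebesgue number lemma to the compact metric space $\metricspace{\widetilde{X}}{\widetilde{d}\phantom{.}}$ together with its open cover $\widetilde{\family{A}}$: there exists $\varepsilon > 0$ such that every $\widetilde{d}$-ball of radius $\varepsilon$ lies inside some element of $\widetilde{\family{A}}$. Should one wish to avoid citing it, this follows by extracting a finite subcover $\widetilde{A}_1, \dotsc, \widetilde{A}_m$, observing that $g(y) = \max_{1 \leq i \leq m} \widetilde{d}(y, \widetilde{X} \setminus \widetilde{A}_i)$ is continuous and strictly positive on the compact space $\widetilde{X}$, and taking $\varepsilon$ to be its positive minimum.

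Finally, I would fix this $\varepsilon$ and verify the refinement. Given $x \in X$, the lemma upstairs supplies $\widetilde{A} \in \widetilde{\family{A}}$ with $\ball[\widetilde{d}]{\varepsilon}{x} \subset \widetilde{A}$; intersecting with $X$ and invoking the identity from the first step yields
\begin{equation*}
  \ball[d]{\varepsilon}{x}
  =
  X \cap \ball[\widetilde{d}]{\varepsilon}{x}
  \subset
  X \cap \widetilde{A}
  \in
  \family{A}.
\end{equation*}
As $x \in X$ was arbitrary, every element of $\balls{d}{\varepsilon}$ sits inside a member of $\family{A}$, which is precisely $\family{A} \prec \balls{d}{\varepsilon}$. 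The only step requiring genuine care is the first one --- that balls of $X$ are traces of balls of $\widetilde{X}$ --- since this is the bridge that makes the compactness of $\widetilde{X}$ usable; everything else is a direct transcription of the compact-case result.
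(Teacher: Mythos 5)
Your proposal is correct and takes essentially the same route as the paper: both arguments produce a single Lebesgue number for the open cover $\widetilde{\family{A}}$ of the compact space $\widetilde{X}$ and then descend to $X$ via the trace identity $\ball[d]{\varepsilon}{x} = X \cap \ball[\widetilde{d}]{\varepsilon}{x}$, exactly the bridge you flag as the step needing care. The only difference is internal and cosmetic: the paper reproves the compact-case Lebesgue lemma from scratch using the sets $C_\varepsilon$ and the inclusion $C_\varepsilon \subset \interior{C_{\frac{\varepsilon}{2}}}$, whereas you cite the classical statement (or the standard finite-subcover, distance-to-complement argument), which is equally valid.
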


  \begin{proof}
    Let
    \begin{equation*}
      C_\varepsilon
      =
      \setsuchthat{x \in \widetilde{X}}{\exists A \in \widetilde{\family{A}},\, \ball[\widetilde{d}]{\varepsilon}{x} \subset A}
    \end{equation*}
    be the set of all $x \in \widetilde{X}$ such that the balls centered in
    $x$ with radius $\varepsilon$ is contained in some element of
    $\widetilde{\family{A}}$.
    Now, we shall find
    $\varepsilon > 0$ such that $\widetilde{X} = C_\varepsilon$.

    \subproof
    {
      $\widetilde{X}
      =
      \bigcup_{\varepsilon > 0} C_\varepsilon$.
    }
    {
      For each $x \in \widetilde{X}$, there is a
      $A \in \widetilde{\family{A}}$ containing $x$.
      This means that there exists $\varepsilon > 0$ with
      $\ball[\widetilde{d}]{\varepsilon}{x} \subset A$.
    }

    \subproof
    {
      $C_\varepsilon \subset \interior{C_{\frac{\varepsilon}{2}}}$.
    }
    {
      For $x \in C_\varepsilon$, take
      $A \in \family{A}$ such that
      $\ball[\widetilde{d}]{\varepsilon}{x} \subset A$.
      And notice that
      \begin{equation*}
        \ball[\widetilde{d}]{\frac{\varepsilon}{2}}{x}
        \subset
        C_{\frac{\varepsilon}{2}}.
      \end{equation*}
      In fact, for
      $y \in \ball[\widetilde{d}]{\frac{\varepsilon}{2}}{x}$,
      we have that
      $\ball[\widetilde{d}]{\frac{\varepsilon}{2}}{y}
      \subset \ball[\widetilde{d}]{\varepsilon}{x}$.
      That is, $x \in \interior{C_{\frac{\varepsilon}{2}}}$.
    }

    Joining both claims, we have that
    \begin{equation*}
      \widetilde{X}
      =
      \bigcup_{\varepsilon > 0} \interior{C_\varepsilon}.
    \end{equation*}
    Since $\widetilde{X}$ is compact,
    there exists $\varepsilon > 0$ such that
    $\widetilde{X} = C_\varepsilon$.
    This means that
    $\widetilde{\family{A}} \prec \balls{\widetilde{d}}{\varepsilon}$.
    Taking the intersection with $X$,
    \begin{equation*}
      \family{A}
      =
      X \cap \widetilde{\family{A}}
      \prec
      X \cap \balls{\widetilde{d}}{\varepsilon}.
    \end{equation*}
    Now, we just have to observe that since
    \begin{equation*}
      \ball[d]{\varepsilon}{x}
      =
      X \cap \ball[\widetilde{d}]{\varepsilon}{x}
    \end{equation*}
    for any $x \in X$,
    \begin{equation*}
      \balls{d}{\varepsilon}
      \subset
      X \cap \balls{\widetilde{d}}{\varepsilon}.
    \end{equation*}
    Therefore,
    \begin{equation*}
      \family{A}
      \prec
      \balls{d}{\varepsilon}.
    \end{equation*}
  \end{proof}

  Finally, we relate the topological entropy and the
  $d$-entropy.

  \begin{proposition}
    \label{prop:d_entropia_e_compactificacao}

    Let $\dynamicalsystem{X}{T}$ be a dynamical system and
    $\widetilde{X}$ a metrizable compactification of $X$.
    If $d$ is the restriction to $X$ of a distance $\widetilde{d}$ in
    $\widetilde{X}$, then
    \begin{equation*}
      \dentropy{d}{T}
      =
      \sup_{\widetilde{\family{A}}}
      \tcoverentropy{X \cap \widetilde{\family{A}}}{T},
    \end{equation*}
    where the supremum is taken over all open covers
    $\widetilde{\family{A}}$ of $\widetilde{X}$.
  \end{proposition}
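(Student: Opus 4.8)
The plan is to establish the two inequalities that together give the stated equality, using the characterization of $\dentropy{d}{T}$ in terms of balls (Lemma \ref{le:xbowen_com_tcoverentropy}) on one side and the Lebesgue number lemma (Lemma \ref{le:numero_de_Lebesgue}) on the other. By Lemma \ref{le:xbowen_com_tcoverentropy}, we know that
\begin{equation*}
  \dentropy{d}{T}
  =
  \sup_{\varepsilon > 0} \tcoverentropy{\balls{d}{\varepsilon}}{T},
\end{equation*}
so the whole argument reduces to comparing the covers $\balls{d}{\varepsilon}$ with the covers of the form $X \cap \widetilde{\family{A}}$, and then invoking the monotonicity of $\tcoverentropy{\cdot}{T}$ with respect to refinement (item \ref{it:le:entropia:propriedades_elementares:entropia_relativa_a_cobertura} of Lemma \ref{le:entropia:propriedades_elementares}).

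For the inequality $\dentropy{d}{T} \leq \sup_{\widetilde{\family{A}}} \tcoverentropy{X \cap \widetilde{\family{A}}}{T}$, first I would fix $\varepsilon > 0$ and exhibit an open cover $\widetilde{\family{A}}$ of $\widetilde{X}$ whose restriction to $X$ refines $\balls{d}{\varepsilon}$. The natural choice is $\widetilde{\family{A}} = \balls[\widetilde{d}]{\varepsilon/2}$, the cover of $\widetilde{X}$ by $\widetilde{d}$-balls of radius $\varepsilon/2$; these are open since $\widetilde{d}$ is a metric on $\widetilde{X}$. Since $\ball[d]{\delta}{x} = X \cap \ball[\widetilde{d}]{\delta}{x}$ for $x \in X$, a standard triangle-inequality estimate shows $X \cap \balls[\widetilde{d}]{\varepsilon/2} \prec \balls{d}{\varepsilon}$ — any $\widetilde{d}$-ball of radius $\varepsilon/2$ meeting $X$ is contained, after intersecting with $X$, in a $d$-ball of radius $\varepsilon$ about one of its points. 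By item \ref{it:le:entropia:propriedades_elementares:entropia_relativa_a_cobertura} of Lemma \ref{le:entropia:propriedades_elementares}, this gives $\tcoverentropy{\balls{d}{\varepsilon}}{T} \leq \tcoverentropy{X \cap \widetilde{\family{A}}}{T} \leq \sup_{\widetilde{\family{A}}} \tcoverentropy{X \cap \widetilde{\family{A}}}{T}$, and taking the supremum over $\varepsilon$ yields the desired bound.

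For the reverse inequality, I would fix an arbitrary open cover $\widetilde{\family{A}}$ of $\widetilde{X}$ and set $\family{A} = X \cap \widetilde{\family{A}}$. The Lebesgue number lemma (Lemma \ref{le:numero_de_Lebesgue}) supplies an $\varepsilon > 0$ with $\family{A} \prec \balls{d}{\varepsilon}$. Again by monotonicity, $\tcoverentropy{X \cap \widetilde{\family{A}}}{T} \leq \tcoverentropy{\balls{d}{\varepsilon}}{T} \leq \sup_{\delta > 0} \tcoverentropy{\balls{d}{\delta}}{T} = \dentropy{d}{T}$. Taking the supremum over all $\widetilde{\family{A}}$ closes the loop. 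The main obstacle — and the reason the compactification hypothesis is essential — is precisely the second direction: without the compactness of $\widetilde{X}$, a general open cover $\family{A}$ of the non-compact space $X$ need carry no positive Lebesgue number, so the refinement $\family{A} \prec \balls{d}{\varepsilon}$ could fail. It is the passage through $\widetilde{X}$ that rescues the argument, and care must be taken that the covers $X \cap \widetilde{\family{A}}$ over which we take the supremum are exactly the admissible-type covers for which Lemma \ref{le:numero_de_Lebesgue} applies.
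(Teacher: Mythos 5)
Your proposal is correct and follows essentially the same route as the paper: the upper bound for $\dentropy{d}{T}$ comes from restricting a ball cover of $\widetilde{X}$ (the paper uses $\widetilde{d}$-balls of radius $\varepsilon$ centered at points of $X$, so that the restriction is exactly $\balls{d}{\varepsilon}$ by density of $X$ in $\widetilde{X}$, while you use all $\varepsilon/2$-balls plus a triangle-inequality refinement --- an immaterial difference), and the reverse bound is exactly the paper's application of Lemma \ref{le:numero_de_Lebesgue} together with monotonicity from Lemma \ref{le:entropia:propriedades_elementares}. One notational slip: with the paper's convention for $\prec$, your displayed relation should read $\balls{d}{\varepsilon} \prec X \cap \balls{\widetilde{d}}{\varepsilon/2}$ (the restricted cover is the finer one), which is what your prose and your subsequent entropy inequality actually use.
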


  \begin{proof}
    The family
    $\family{D}
    =
    \setsuchthat{\ball[\widetilde{d}]{\varepsilon}{x}}{x \in X}$
    covers $\widetilde{X}$, and is such that
    $\balls{d}{\varepsilon} = X \cap \family{D}$.
    This implies that
    \begin{equation*}
      \dentropy{d}{T}
      \leq
      \sup_{\widetilde{\family{A}} \text{: open}}
      \tcoverentropy{X \cap \widetilde{\family{A}}}{T}.
    \end{equation*}
    Now, Lemma
    \ref{le:numero_de_Lebesgue}
    implies that for any open cover of $\widetilde{X}$,
    $\widetilde{\family{A}}$, there exists
    $\varepsilon > 0$ such that
    \begin{equation*}
      X \cap \widetilde{\family{A}}
      \prec
      \balls{d}{\varepsilon}.
    \end{equation*}
    And this means that
    \begin{equation*}
      \tcoverentropy{X \cap \widetilde{\family{A}}}{T}
      \leq
      \tcoverentropy{\balls{d}{\varepsilon}}{T}.
    \end{equation*}
    Taking the supremum in $\varepsilon$ and applying the Lemma
    \ref{le:xbowen_com_tcoverentropy},
    we conclude that
    \begin{equation*}
      \tcoverentropy{X \cap \widetilde{\family{A}}}{T}
      \leq
      \dentropy{d}{T}.
    \end{equation*}
    Taking the supremum for all open covers
    $\widetilde{\family{A}}$,
    \begin{equation*}
      \sup_{\widetilde{\family{A}} \text{: open}}
      \tcoverentropy{X \cap \widetilde{\family{A}}}{T}
      \leq
      \dentropy{d}{T}.
    \end{equation*}
  \end{proof}

  When the dynamical system is compact, we know that the $d$-entropy
  does not depend on the distance $d$.
  The following corollary to Proposition
  \ref{prop:d_entropia_e_compactificacao}
  extends this result.

  \begin{corollary}
    \label{corollary:entropia_de_bowen_coincide_para_metricas_admissiveis}

    Let $\dynamicalsystem{X}{T}$ be a dynamical system
    with $X$ admitting a metrizable compactification $\widetilde{X}$.
    If $d$ and $c$ are the restriction to $X$ of distancess
    $\widetilde{d}$ and $\widetilde{c}$ in $\widetilde{X}$, then
    \begin{equation*}
      \dentropy{d}{T}
      =
      \dentropy{c}{T}.
    \end{equation*}
  \end{corollary}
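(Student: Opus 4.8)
The plan is to reduce both $d$-entropies to a single purely topological quantity by invoking Proposition \ref{prop:d_entropia_e_compactificacao}. The crucial observation is that the right-hand side furnished by that proposition --- the supremum of $\tcoverentropy{X \cap \widetilde{\family{A}}}{T}$ taken over all open covers $\widetilde{\family{A}}$ of $\widetilde{X}$ --- makes no reference whatsoever to the chosen distance; it depends only on the topology of $\widetilde{X}$ and on the dynamical system $T$. Thus the whole corollary should fall out of matching this topological expression against itself.

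First I would apply Proposition \ref{prop:d_entropia_e_compactificacao} to the distance $d$, which is by hypothesis the restriction to $X$ of $\widetilde{d}$, to obtain
\begin{equation*}
  \dentropy{d}{T}
  =
  \sup_{\widetilde{\family{A}}}
  \tcoverentropy{X \cap \widetilde{\family{A}}}{T}.
\end{equation*}
Then I would apply the very same proposition to the distance $c$, the restriction of $\widetilde{c}$ to $X$, to obtain
\begin{equation*}
  \dentropy{c}{T}
  =
  \sup_{\widetilde{\family{A}}}
  \tcoverentropy{X \cap \widetilde{\family{A}}}{T}.
\end{equation*}
In both displays the supremum ranges over the identical family of open covers of the common compactification $\widetilde{X}$, so the two right-hand sides are literally the same expression. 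The desired equality $\dentropy{d}{T} = \dentropy{c}{T}$ is then immediate.

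I expect essentially no obstacle here, since the entire analytic content has already been absorbed into Proposition \ref{prop:d_entropia_e_compactificacao}, whose proof used the Lebesgue number argument (Lemma \ref{le:numero_de_Lebesgue}) to sandwich arbitrary open covers of $\widetilde{X}$ between families of metric balls. The only subtlety worth stressing is that $\widetilde{d}$ and $\widetilde{c}$ are distances on the \emph{same} compactification $\widetilde{X}$, and hence induce the same topology; this is precisely what guarantees that the supremum over open covers is taken over the same collection in both applications of the proposition, so that no comparison between the two metrics is ever required.
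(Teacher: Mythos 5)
Your proposal is correct and is precisely the argument the paper intends: the paper states the result as an immediate corollary of Proposition \ref{prop:d_entropia_e_compactificacao} without writing a proof, exactly because the right-hand side $\sup_{\widetilde{\family{A}}} \tcoverentropy{X \cap \widetilde{\family{A}}}{T}$ is metric-independent. Your remark that $\widetilde{d}$ and $\widetilde{c}$ live on the \emph{same} compactification, so the supremum ranges over the same collection of open covers, is the right (and only) point to check.
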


  Starting with a technique similar to what we used in the demonstration of
  Lemma \ref{le:numero_de_Lebesgue},
  we relate Bowen's entropy and the topological entropy.

  \begin{proposition}
    \label{prop:topological_and_bowen_entropy}
    Let $\dynamicalsystem{X}{T}$ be a dynamical system
    with a distance $d$.
    Then,
    \begin{equation*}
      \topologicalentropy{T}
      \leq
      \bowenentropy{d}{T}
      \leq
      \dentropy{d}{T}.
   \end{equation*}
  \end{proposition}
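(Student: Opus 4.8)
The plan is to prove the two inequalities separately. The rightmost one, $\bowenentropy{d}{T}\le\dentropy{d}{T}$, is immediate: for any compact $K\subset X$ and any $\varepsilon>0$, restricting a minimal subcover of $X$ by $\balls{\iteratedmetric{d}{n}}{\varepsilon}$ to $K$ produces a cover of $K$, so $\covercardinality[K]{\balls{\iteratedmetric{d}{n}}{\varepsilon}}\le\covercardinality{\balls{\iteratedmetric{d}{n}}{\varepsilon}}$. Taking $\frac1n\log$, then $\limsup_n$ on the left against the (existing) limit on the right, and finally the supremum over $\varepsilon$ and $K$, yields $\bowenentropy{d}{T}\le\dentropy{d}{T}$.

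The work is in $\topologicalentropy{T}\le\bowenentropy{d}{T}$. Fix an admissible cover $\family{A}$; I must show $\tcoverentropy{\family{A}}{T}\le\bowenentropy{d}{T}$. First I would reduce to a finite cover: some $A_0\in\family{A}$ has compact complement $K_0=X\setminus A_0$, so the open cover $\family{A}$ of $K_0$ has a finite subcover, which together with $A_0$ is a finite admissible subfamily $\family{A}'\subset\family{A}$. Since $\family{A}\prec\family{A}'$, Lemma \ref{le:entropia:propriedades_elementares} gives $\tcoverentropy{\family{A}}{T}\le\tcoverentropy{\family{A}'}{T}$, so I may assume $\family{A}=\set{A_0,\dotsc,A_m}$ is finite. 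Next, mimicking the proof of Lemma \ref{le:numero_de_Lebesgue}, I would produce a \emph{global} Lebesgue number: an $\varepsilon>0$ with $\ball[d]{\varepsilon}{x}$ contained in some element of $\family{A}$ for \emph{every} $x\in X$. Setting $C_\varepsilon=\setsuchthat{x\in X}{\ball[d]{\varepsilon}{x}\subset A\text{ for some }A\in\family{A}}$, the two claims of Lemma \ref{le:numero_de_Lebesgue} give $X=\bigcup_{\varepsilon>0}\interior{C_\varepsilon}$. Using local compactness I would choose a compact neighborhood $K_1$ of $K_0$ and then $\varepsilon>0$ so small that $K_1\subset C_\varepsilon$ and the $\varepsilon$-neighborhood of $K_0$ lies inside $K_1$. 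Then $x\in K_1$ falls under the Lebesgue property, while $x\notin K_1$ forces $d(x,K_0)\ge\varepsilon$, whence $\ball[d]{\varepsilon}{x}\subset A_0$; so the property holds at every point of $X$, not just on $K_0$.

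With this $\varepsilon$ fixed, I would bound $\covercardinality{\family{A}^n}$ by exhibiting a small subcover of $X$ by elements of $\family{A}^n$. Points whose orbit avoids $K_0$ throughout $[0,n)$ lie in the single set $\bigcap_{j=0}^{n-1}T^{-j}(A_0)\in\family{A}^n$. For the rest, classify $y$ by the first time $\tau$ with $T^\tau y\in K_0$. Covering $K_0$ by $\covercardinality[K_0]{\balls{\iteratedmetric{d}{n-\tau}}{\varepsilon}}$ balls $\ball[\iteratedmetric{d}{n-\tau}]{\varepsilon}{c}$ with $c\in K_0$, and applying the global Lebesgue property at each forward iterate $T^ic$ (here $\ball[d]{\varepsilon}{T^ic}\subset A(c,i)$ for some $A(c,i)\in\family{A}$), one checks that each center $c$ forces $y$ into the single element of $\family{A}^n$ using the factor $A_0$ at coordinates $i<\tau$ and the factor $A(c,i)$ at coordinate $\tau+i$. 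Hence
\[
  \covercardinality{\family{A}^n}
  \le
  1 + \sum_{k=1}^{n}\covercardinality[K_0]{\balls{\iteratedmetric{d}{k}}{\varepsilon}}.
\]

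Finally, write $b_k=\log\covercardinality[K_0]{\balls{\iteratedmetric{d}{k}}{\varepsilon}}$ and $L=\limsup_k\tfrac1k b_k$, which satisfies $L\le\bowenentropy{d}{T}$ by taking $K=K_0$ and this $\varepsilon$ in the definition. Since the entropies are nonnegative, the elementary estimate $\frac1n\log\bigl(1+\sum_{k\le n}e^{b_k}\bigr)\le\frac{\log n}{n}+\frac1n\max_{k\le n}b_k$ gives $\limsup_n\frac1n\log\covercardinality{\family{A}^n}\le L\le\bowenentropy{d}{T}$, so $\tcoverentropy{\family{A}}{T}\le\bowenentropy{d}{T}$; taking the supremum over admissible covers yields the claim. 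I expect the main obstacle to be exactly the feature that distinguishes this from the proper/compact theory: because $T$ need not be proper, the forward iterates $T^ic$ of points of $K_0$ leave every compact set, so the Lebesgue property on $K_0$ alone is insufficient. The remedy is to upgrade it to a global Lebesgue number by absorbing an $\varepsilon$-collar of $K_0$ into a compact neighborhood, which is precisely what makes the first-visit count collapse back onto the single compact set $K_0$ and thus onto $\bowenentropy{d}{T}$ rather than $\dentropy{d}{T}$.
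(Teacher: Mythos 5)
Your overall architecture---a global Lebesgue-type property for the admissible cover, a first-entry-time decomposition of $X$ relative to the compact complement $K_0$, and a count of the form $1+\sum_{k\le n}\covercardinality[K_0]{\balls{\iteratedmetric{d}{k}}{\varepsilon}}$---is exactly the paper's, and both your right-hand inequality and your final $\max_{k\le n}b_k$ estimate are sound (the latter works because $b_k\ge 0$). But one step fails at the stated level of generality: the proposition assumes only that $(X,d)$ is a metric space with $T$ continuous, \emph{not} that $X$ is locally compact, and your derivation of the global Lebesgue number begins by ``choosing a compact neighborhood $K_1$ of $K_0$.'' In a general metric space no such $K_1$ need exist: in an infinite-dimensional normed space, for instance, compact sets have empty interior and admit no compact neighborhoods, yet admissible covers abound (take $\set{X\setminus K_0}$ together with any open cover of a compact $K_0$). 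So as written your proof establishes the proposition only for locally compact $X$, which suffices for the paper's main theorem but not for the statement as given, which Lemma \ref{lemma:principio_variacional:desigualdade_facil} invokes for an arbitrary metrizable system.

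The repair is the paper's actual device, and it makes your collar construction unnecessary: define $C_\varepsilon$ on $K=K_0$ alone but with \emph{doubled} radius, $C_\varepsilon=\setsuchthat{x\in K}{\exists A\in\family{A},\ \ball[d]{2\varepsilon}{x}\subset A}$, and run the compactness argument of Lemma \ref{le:numero_de_Lebesgue} on $K$ to get $\varepsilon>0$ with $K=C_\varepsilon$. Then every ball $B\in\balls{d}{\varepsilon}$ is already contained in some element of $\family{A}$: if $B\cap K=\emptyset$ then $B\subset\complementset{K}\in\family{A}$ (this is where admissibility does the work your collar was meant to do), while if $x\in B\cap K$ then $B\subset\ball[d]{2\varepsilon}{x}\subset A_x$ by the triangle inequality. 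This yields $\family{A}\prec\balls{d}{\varepsilon}$ globally with no $K_1$ and no local compactness, after which your counting goes through essentially verbatim. Three cosmetic remarks: a minimal subcover realizing $\covercardinality[K_0]{\balls{\iteratedmetric{d}{k}}{\varepsilon}}$ need not have centers in $K_0$, but since the Lebesgue property is global this costs nothing; the paper shortcuts your maximum estimate by monotonicity, bounding each first-entry class by $\covercardinality[K]{\balls{d}{\varepsilon}^n}$ and hence $\covercardinality{\family{A}^n}\le(n+1)\covercardinality[K]{\balls{d}{\varepsilon}^n}$; and your preliminary reduction to a finite subcover is superfluous, since nothing downstream uses finiteness.
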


  \begin{proof}
    The Bowen entropy will be calculated according to Lemma
    \ref{le:xbowen_com_tcoverentropy}.
    It is evident from the definitions that
    $\bowenentropy{d}{T} \leq \dentropy{d}{T}$.

    Given an admissible cover $\family{A}$,
    take a compact $K \subset X$
    such that $\complementset{K} \in \family{A}$.
    Let
    \begin{equation*}
      C_\varepsilon
      =
      \setsuchthat{x \in K}{\exists A \in \family{A},\, \ball[d]{2\varepsilon}{x} \subset A}
    \end{equation*}
    be the set of all $x \in K$ such that the ball centered at
    $x$ with radius $2\varepsilon$ is subset of some element in
    $\family{A}$.
    Just as in the proof of Lemma
    \ref{le:numero_de_Lebesgue},
    we can take $\varepsilon > 0$ such that
    $K = C_{\varepsilon}$.

    \subproof
    {
      $\family{A} \prec \balls{d}{\varepsilon}$.
    }
    {
      If $B \in \balls{d}{\varepsilon}$ does not intersect $K$,
      then, $B \subset \complementset{K} \in \family{A}$.
      And if $B$ does intersect $K$ then, taking
      $x \in B \cap K$, from our choice of $\varepsilon$,
      there is an $A_x \in \family{A}$ such that
      \begin{equation*}
        B
        \subset
        \ball[d]{2 \varepsilon}{x}
        \subset
        A_x.
      \end{equation*}
      That is,
      $\family{A} \prec \balls{d}{\varepsilon}$.
    }

    Therefore, defining
    \begin{equation*}
      \family{D}
      =
      \balls{d}{\varepsilon}
      \cup
      \set{\complementset{K}},
    \end{equation*}
    it follows that $\family{A} \prec \family{D}$.
    Let us partition $X$ in $K_0, \dotsc, K_{n-1}$ and
    $\widetilde{K}$, with
    \begin{equation*}
      \widetilde{K}
      =
      \complementset{K}
      \cap
      T^{-1}\left( \complementset{K} \right)
      \cap \dotsb \cap
      T^{-(n-1)}\left( \complementset{K} \right)
      \in
      \family{D}^n
    \end{equation*}
    and, for $m = 0, \dotsc, n-1$,
    \begin{equation*}
      K_m
      =
      \complementset{K}
      \cap
      T^{-1}\left( \complementset{K} \right)
      \cap \dotsb \cap
      T^{-(m-1)}\left( \complementset{K} \right)
      \cap
      T^{-m}(K).
    \end{equation*}
    Notice that, in fact,
    $X = \widetilde{K} \cup \bigcup_{m=0}^{n-1} K_m$.
    So,
    \begin{equation*}
      \covercardinality{\family{D}^n}
      \leq
      \covercardinality[\widetilde{K}]{\family{D}^n}
      +
      \sum_{m=0}^{n-1}
      \covercardinality[K_m]{\family{D}^n}.
    \end{equation*}
    Since $\widetilde{K} \in \family{D}^n$,
    \begin{equation*}
      \covercardinality[\widetilde{K}]{\family{D}^n}
      =
      1.
    \end{equation*}
    For $m = 0, \dotsc, n-1$, denote by
    $\family{B}_m$ the family of sets of the form
    \begin{equation*}
      A_0
      \cap
      T^{-1}(A_1)
      \cap \dotsb \cap
      T^{-(n-1)}(A_{n-1}),
    \end{equation*}
    where $A_j = \complementset{K}$ for $j = 0, \dotsc, m-1$,
    and $A_j \in \balls{d}{\varepsilon}$ for $j \geq m$.
    That is,
    \begin{equation*}
      \family{B}_m
      =
      \complementset{K}
      \cap
      T^{-1}\left( \complementset{K} \right)
      \cap \dotsb \cap
      T^{-(m-1)}\left( \complementset{K} \right)
      \cap
      T^{-m}
      \left(
        \balls{d}{\varepsilon}^{n-m}
      \right)
      \subset
      \family{D}^n.
    \end{equation*}
    If
    $\family{F} \subset \balls{d}{\varepsilon}^{n-m}$
    is a cover of $K$, then
    \begin{equation*}
      \complementset{K}
      \cap
      T^{-1}\left( \complementset{K} \right)
      \cap \dotsb \cap
      T^{-(m-1)}\left( \complementset{K} \right)
      \cap
      T^{-m}(\family{F})
      \subset
      \family{B}_m
    \end{equation*}
    is a cover of $K_m$.
    This way,
    \begin{align*}
      \covercardinality[K_m]{\family{D}^n}
      &\leq
      \covercardinality[K_m]{\family{B}_m}
      \\
      &\leq
      \covercardinality[K]
      {
        \balls{d}{\varepsilon}^{n-m}
      }
      \\
      &\leq
      \covercardinality[K]
      {
        \balls{d}{\varepsilon}^n
      }.
    \end{align*}
    And therefore,
    \begin{align*}
      \covercardinality{\family{A}^n}
      &\leq
      \covercardinality{\family{D}^n}
      \\
      &\leq
      \covercardinality[\widetilde{K}]{\family{D}^n}
      +
      \sum_{m=0}^{n-1}
      \covercardinality[K_m]{\family{D}^n}
      \\
      &\leq
      1 +
      \sum_{m=0}^{n-1}
      \covercardinality[K_m]{\family{D}^n}
      \\
      &\leq
      (n+1)
      \covercardinality[K]{\balls{d}{\varepsilon}^n}.
    \end{align*}
    Taking the $\log$, dividing by $n$ and making
    $n \rightarrow \infty$,
    and using Lemma \ref{le:xbowen_com_tcoverentropy},
    we get that
    \begin{align*}
      \tcoverentropy{\family{A}}{T}
      &\leq
      \limsup_{n \rightarrow \infty}
      \frac{1}{n}
      \left(
        \log
        (n+1)
        +
        \log
        \covercardinality[K]{\balls{d}{\varepsilon}^n}
      \right)
      \\
      &=
      \limsup_{n \rightarrow \infty}
      \frac{1}{n}
      \log
      \covercardinality[K]{\balls{d}{\varepsilon}^n}
      \\
      &\leq
      \bowenentropy{d}{T},
    \end{align*}
    since
    $\frac{1}{n} \log (n+1) \rightarrow 0$.
    Taking the supremum in $\family{A}$,
    \begin{equation*}
      \topologicalentropy{T}
      \leq
      \bowenentropy{d}{T}.
    \end{equation*}
  \end{proof}

  \begin{obs}
    \label{obs:topological_and_xbowen_entropy:desigualdade_falsa}
    When the families $\balls{d}{\varepsilon}$
    are admissible covers, Lemma
    \ref{le:xbowen_com_tcoverentropy} warrants that
    \begin{equation*}
      \bowenentropy{d}{T}
      \leq
      \topologicalentropy{T}.
    \end{equation*}
    Nonetheless, even when $d$ is the restriction of a distance given in
    some compactification of the system,
    $\balls{d}{\varepsilon}$ might not be admissible.
    As an example, just take $\balls{d}{\frac{1}{2}}$ in $(0,1)$ with
    the Euclidean distance.
    No set in $\balls{d}{\frac{1}{2}}$ will ever have compact
    complement and therefore,
    $\balls{d}{\frac{1}{2}}$ is not admissible.
  \end{obs}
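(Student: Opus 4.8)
The plan for the first assertion is to read the inequality directly off the characterizations already in hand. Restricting the cover-cardinality to a compact set can only decrease it, so $\bowenentropy{d}{T} \le \dentropy{d}{T}$, and by Lemma~\ref{le:xbowen_com_tcoverentropy} the latter equals $\sup_{\varepsilon > 0} \tcoverentropy{\balls{d}{\varepsilon}}{T}$. Under the stated hypothesis that every ball family $\balls{d}{\varepsilon}$ is an admissible cover, each term $\tcoverentropy{\balls{d}{\varepsilon}}{T}$ occurs among the covers over which the supremum defining $\topologicalentropy{T} = \sup_{\family{A}\text{: admissible}} \tcoverentropy{\family{A}}{T}$ is taken. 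Hence $\dentropy{d}{T} \le \topologicalentropy{T}$, and chaining the two bounds yields $\bowenentropy{d}{T} \le \topologicalentropy{T}$.

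For the second assertion I would simply exhibit the failure of admissibility on $X = (0,1)$ with the Euclidean distance $d$, which is the restriction of the usual metric on the compactification $[0,1]$. Fixing a centre $x \in (0,1)$, the ball $\ball[d]{\frac{1}{2}}{x} = (x-\frac{1}{2}, x+\frac{1}{2}) \cap (0,1)$ equals $(0, x+\frac{1}{2})$ when $x \le \frac{1}{2}$ and $(x-\frac{1}{2}, 1)$ when $x \ge \frac{1}{2}$; in either case its complement in $(0,1)$ is an interval of the form $[x+\frac{1}{2},1)$ or $(0, x-\frac{1}{2}]$. Such an interval carries a sequence converging to the missing endpoint $1$ or $0$ and so has no convergent subsequence in $(0,1)$; it is therefore not compact. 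Since this holds for every centre, no member of $\balls{d}{\frac{1}{2}}$ has compact complement, so $\balls{d}{\frac{1}{2}}$ is not admissible, even though $d$ comes from a compactification.

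I expect no genuine analytic obstacle in either part; both are bookkeeping on top of Lemma~\ref{le:xbowen_com_tcoverentropy} together with a one-line topological check. The only point needing care is conceptual, and the two halves are meant to illuminate each other: the admissibility hypothesis must be invoked exactly when the supremum is enlarged from ball families to all admissible covers, and the example shows precisely that this hypothesis can fail even for a metric that extends to a compactification. That is why one cannot, in general, reverse the substantive estimate $\topologicalentropy{T} \le \bowenentropy{d}{T}$ of Proposition~\ref{prop:topological_and_bowen_entropy} by this easy route.
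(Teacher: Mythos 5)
Your first paragraph is exactly the paper's intended route: $\bowenentropy{d}{T} \leq \dentropy{d}{T}$ is immediate from the definitions (as the paper itself notes in the proof of Proposition \ref{prop:topological_and_bowen_entropy}), Lemma \ref{le:xbowen_com_tcoverentropy} rewrites $\dentropy{d}{T}$ as $\sup_{\varepsilon > 0} \tcoverentropy{\balls{d}{\varepsilon}}{T}$, and the admissibility hypothesis is invoked precisely where the supremum over ball families is dominated by the supremum over all admissible covers. No complaints there.

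In the second half, however, your case analysis has a genuine gap at the centre $x = \frac{1}{2}$ — and it is worth flagging because your own formulas expose it, whereas the paper glosses over it. For $x = \frac{1}{2}$ the ball $\ball[d]{\frac{1}{2}}{x}$ is all of $(0,1)$, so the complement you describe is $\left[1, 1\right) = \emptyset$ (equivalently $\left(0, 0\right] = \emptyset$): an empty interval carries no sequence converging to a missing endpoint, and it \emph{is} compact. Since Definition \ref{def:cobertura_admissivel} only requires that at least one element of the cover have compact complement, this single degenerate ball makes $\balls{d}{\frac{1}{2}}$ literally admissible, so the claim ``no set in $\balls{d}{\frac{1}{2}}$ has compact complement'' is false as stated — a flaw you inherited from the paper rather than introduced. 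The fix is cheap: take any radius $\varepsilon < \frac{1}{2}$, say $\balls{d}{\frac{1}{4}}$. Then every ball misses a subinterval of the form $\left(0, x - \frac{1}{4}\right]$ or $\left[x + \frac{1}{4}, 1\right)$, each nonempty and non-compact in $(0,1)$ by exactly the sequence argument you give, so no member has compact complement and the family is genuinely not admissible. The substance of the remark — that admissibility of the ball families can fail even when $d$ extends to the compactification $[0,1]$ — survives unchanged with the corrected radius.
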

}
}
  {\section{Variational Principle}
  \label{sec:principle}

  The preparations made in Section
  \ref{sec:preliminaries}
  allow us to use Misiurewicz's demonstration
  of the variational principle almost verbatim.
  Misiurewicz's original article is \cite{misiurewicz}.
  A didatic presentation can be found for example in
  \cite{walters}.

  We are concerned about the supremum of the Kolmogorov-Sinai
  entropies over every $T$-invariant Radon probability measure.
  If there is no such a probability measure,
  we agree that
  \begin{equation*}
    \ksentropy{T}
    =
    0.
  \end{equation*}
  According to Lemma
  \ref{le:ksentropy_com_medidas_leq_1},
  this is the same as taking the supremum over all $T$-invariant
  Radon measures $\mu$ with $0 \leq \mu(X) \leq 1$.
  In this case, there is always an invariant measure.
  Namely, $\mu = 0$.

  The following theorem is our main result.
  The proof will be provided after a few auxiliary results.

  \begin{theorem}
    \label{th:principio_variacional}

    Let $\dynamicalsystem{X}{T}$ be a metrizable locally compact separable
    dynamical system.
    Then,
    \begin{equation*}
      \ksentropy{T}
      =
      \topologicalentropy{T}
      =
      \min_{d}
      \bowenentropy{d}{T}.
    \end{equation*}
    The infimum is attained when $d$ is any distance that can be
    extended to the one point compactification of $X$.
  \end{theorem}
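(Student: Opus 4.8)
The plan is to fix, via Lemma \ref{lemma:compactificacao}, the compact metrizable system $\dynamicalsystem{Z}{S}$ of which $\dynamicalsystem{X}{T}$ is a subsystem, together with a distance $\widetilde{d}$ on $Z$ pulled back from a distance on $\onepointcompactification{X}$, writing $d$ for its restriction to $X$; the compact oversystem will only be needed for the hard measure-theoretic direction. First I would dispose of the purely topological equalities. Proposition \ref{prop:d_entropia_e_compactificacao}, applied with the one point compactification as the ambient $\widetilde{X}$, gives $\dentropy{d}{T} = \sup_{\widetilde{\family{A}}} \tcoverentropy{X \cap \widetilde{\family{A}}}{T}$, the supremum being over open covers of $\onepointcompactification{X}$. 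By Remark \ref{obs:cobertura_admissivel_vem_da_compactificacao} the families $X \cap \widetilde{\family{A}}$ are exactly the admissible covers of $X$, so this supremum is $\topologicalentropy{T}$, whence $\dentropy{d}{T} = \topologicalentropy{T}$. Combined with $\topologicalentropy{T} \le \bowenentropy{d}{T} \le \dentropy{d}{T}$ from Proposition \ref{prop:topological_and_bowen_entropy}, the three quantities coincide for every such $d$. Since Proposition \ref{prop:topological_and_bowen_entropy} also gives $\topologicalentropy{T} \le \bowenentropy{c}{T}$ for an arbitrary distance $c$, the value $\topologicalentropy{T}$ is a lower bound for all Bowen entropies and is attained precisely by the compactification distances, whose common value is independent of the choice by Corollary \ref{corollary:entropia_de_bowen_coincide_para_metricas_admissiveis}; this settles the $\min_d$ assertion. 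What then remains is the identity $\ksentropy{T} = \topologicalentropy{T}$.

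For the inequality $\ksentropy{T} \le \topologicalentropy{T}$ I would argue on $X$ itself. By Lemma \ref{le:ksentropy_com_medidas_leq_1} it is enough to bound $\measureentropy{\mu}{T}$ for a $T$-invariant probability $\mu$. Given a finite measurable partition $\family{C}$ and $\delta > 0$, regularity of the Radon measure $\mu$ on the locally compact separable metrizable space $X$ lets me construct a partition $\family{D}$ all of whose elements, save one of the form $\complementset{K}$ with $K$ compact, are small sets contained in $K$, and for which the conditional entropy $\conditionalpartitionentropy{\mu}{\family{C}}{\family{D}}$ is below $\delta$. Such a $\family{D}$ is an admissible cover, so $\tpartitionentropy{\mu}{\family{D}}{T} \le \tcoverentropy{\family{D}}{T} \le \topologicalentropy{T}$ by the bound $\partitionentropy{\mu}{\family{D}^n} \le \log \covercardinality{\family{D}^n}$ of Lemma \ref{lemma:partition_entropy:properties}. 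Lemma \ref{lemma:estimation_using_conditional_entropy} then yields $\tpartitionentropy{\mu}{\family{C}}{T} \le \topologicalentropy{T} + \delta$, and letting $\delta \to 0$ and taking the supremum over $\family{C}$ gives $\measureentropy{\mu}{T} \le \topologicalentropy{T}$.

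For the reverse inequality I would run Misiurewicz's construction inside $\dynamicalsystem{Z}{S}$. Using the separated-set description $\topologicalentropy{T} = \dentropy{d}{T} = \sup_{\varepsilon} \lim_n \frac{1}{n} \log \separatedcardinality{n}{\varepsilon}{X}$ from Proposition \ref{prop:entropia_com_conjuntos_separados}, I fix $\varepsilon$, take maximal $(n,\varepsilon)$-separated sets $E_n \subset X$, form the associated empirical measures, average them along the $S$-orbit, and extract a weak-$*$ limit $\nu$ in the compact space of probabilities on $Z$; then $\nu$ is $S$-invariant and $\mu = \nu|_X$ is $T$-invariant by Lemma \ref{lemma:s_invariant_is_t_invariant}. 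The crux is to build, as in Misiurewicz's estimate, a finite partition $\family{Z} = \set{Z_0, \dotsc, Z_k}$ of $Z$ with $\nu$-negligible boundaries and pieces of $\widetilde{d}$-diameter below $\varepsilon$, arranged so that $Z_1, \dotsc, Z_k \subset X$ while $Z_0$ is a small neighborhood of the point at infinity containing $\complementset{X}$. For such a partition Lemma \ref{lemma:entropia_na_compactificacao} gives $\tpartitionentropy{\nu}{\family{Z}}{S} \le \tpartitionentropy{\mu}{X \cap \family{Z}}{T} \le \measureentropy{\mu}{T}$, while Misiurewicz's counting estimate bounds $\tpartitionentropy{\nu}{\family{Z}}{S}$ below by $\lim_n \frac{1}{n} \log \separatedcardinality{n}{\varepsilon}{X}$, and Lemma \ref{lemma:medida_no_complemento} ensures that $Z_0$ carries no dynamically relevant mass, so that collapsing the non-compact part into this single piece does not spoil the estimate; taking the supremum over $\varepsilon$ yields $\topologicalentropy{T} \le \ksentropy{T}$. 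The main obstacle is exactly this simultaneous control: the separated points are spread throughout the non-compact $X$, yet the partition must funnel the whole complement $\complementset{X}$ into one small piece, and one must verify that the points of $E_n$ escaping every fixed compact set do not inflate $\tpartitionentropy{\nu}{\family{Z}}{S}$ beyond what the separated-set count allows. It is here that local compactness, separability, and the invariance facts of Lemmas \ref{lemma:s_invariant_is_t_invariant} and \ref{lemma:medida_no_complemento} are indispensable.
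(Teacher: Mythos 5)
Your third part (the hard inequality) mirrors the paper's proof essentially step for step: the shift compactification $\dynamicalsystem{Z}{S}$ of Lemma \ref{lemma:compactificacao}, the pseudometric pulled back from the one point compactification, empirical measures with a weak-$*$ limit, a partition with null-boundary pieces of diameter below $\varepsilon$ and with $\complementset{X} \subset Z_0$, and the transfer back to $T$ via Lemma \ref{lemma:entropia_na_compactificacao}. Your first part, on the other hand, is a genuinely different and correct route to the topological equalities: the paper only obtains $\topologicalentropy{T} = \min_d \bowenentropy{d}{T}$ by closing the full chain through $\ksentropy{T}$, so the measure-theoretic work is needed even for the purely topological statement; you instead observe that for the one point compactification the traces $X \cap \widetilde{\family{A}}$ are \emph{exactly} the admissible covers (Remark \ref{obs:cobertura_admissivel_vem_da_compactificacao} in one direction, and in the other the element of $\widetilde{\family{A}}$ containing $\infty$ has trace with compact complement), so Proposition \ref{prop:d_entropia_e_compactificacao} gives $\dentropy{d}{T} = \topologicalentropy{T}$ directly, and Proposition \ref{prop:topological_and_bowen_entropy} then pins down $\bowenentropy{d}{T}$ and the minimality claim without any measures. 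That decoupling is sound and arguably cleaner than the paper's chain.

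The genuine gap is in your second part, the inequality $\ksentropy{T} \leq \topologicalentropy{T}$. You assert that the finite measurable partition $\family{D}$ --- small pieces inside a compact $K$ together with $\complementset{K}$ --- ``is an admissible cover.'' It is not: by Definition \ref{def:cobertura_admissivel} an admissible cover is an \emph{open} cover, and the small pieces of a partition of $K$ cannot all be open (in a connected space no finite partition into open sets exists at all). Consequently your step $\tcoverentropy{\family{D}}{T} \leq \topologicalentropy{T}$ is unjustified: the supremum defining $\topologicalentropy{T}$ simply does not see $\family{D}$, and this is precisely the delicate point of Misiurewicz's argument in the non-compact setting. The paper's Lemma \ref{lemma:principio_variacional:desigualdade_facil} repairs it in two moves that your sketch omits. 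First, from the partition $\family{D} = \setsuchthat{D_C}{C \in \family{C}} \cup \set{D^*}$ (with each $D_C$ compact, so that $D^*$ and each $D_C \cup D^*$ are open) it forms the strongly admissible cover $\family{A} = \setsuchthat{D_C \cup D^*}{C \in \family{C}}$ and proves the counting bound $\covercardinality{\family{D}^n} \leq 2^n \covercardinality{\family{A}^n}$, yielding $\tpartitionentropy{\mu}{\family{D}}{T} \leq \tcoverentropy{\family{A}}{T} + \log 2$. Second, the resulting additive constant cannot be absorbed into your $\delta$: even with $\conditionalpartitionentropy{\mu}{\family{C}}{\family{D}}$ arbitrarily small one only gets $\measureentropy{\mu}{T} \leq \topologicalentropy{T} + \log 2 + \delta$, and the $\log 2$ is removed by applying the whole estimate to the iterates $T^n$ and invoking Lemma \ref{lemma:sistema_iterado}, whose inequality $\topologicalentropy{T^n} \leq n \topologicalentropy{T}$ (only an inequality here, since $\family{A}_T^n$ need not be admissible) points in exactly the needed direction. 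Without the partition-to-open-cover passage and the power trick, the easy inequality does not go through.
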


  The following lemma is the part of the variational principle
  that is valid for every topological dynamical system.
  The lemma remarkably generalizes Proposition 1.4
  of \cite{handel}, since the only hypothesis we impose to the system
  $\dynamicalsystem{X}{T}$ is that $T$ is continuous.
  Notice how the concept of \emph{admissible cover} allowed us to
  generalize Proposition 1.4 fo \cite{handel} without even
  appealing to advanced results like the
  \emph{Ergodic Decomposition},
  the
  \emph{Shannon-McMillan-Breiman}
  or to the
  \emph{Birkhoff Ergodic Theorem}.

  \begin{lemma}
    \label{lemma:principio_variacional:desigualdade_facil}
    Let $\dynamicalsystem{X}{T}$ be a dynamical system
    and $\mu$ a $T$-invariant Radon probability measure.
    Then,
    \begin{equation*}
      \measureentropy{\mu}{T}
      \leq
      \topologicalentropy{T}.
    \end{equation*}
    If $X$ is metrizable with distance $d$, then
    \begin{equation*}
      \measureentropy{\mu}{T}
      \leq
      \topologicalentropy{T}
      \leq
      \bowenentropy{d}{T}.
    \end{equation*}
  \end{lemma}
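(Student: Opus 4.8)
The second inequality, $\topologicalentropy{T} \le \bowenentropy{d}{T}$ in the metrizable case, is exactly Proposition \ref{prop:topological_and_bowen_entropy}, so the entire task reduces to establishing $\measureentropy{\mu}{T} \le \topologicalentropy{T}$. The plan is to follow Misiurewicz's scheme: fix an arbitrary finite measurable partition $\family{C} = \set{C_1, \dotsc, C_k}$, bound $\tpartitionentropy{\mu}{\family{C}}{T}$ by $\topologicalentropy{T}$ up to controllable error terms, and then take the supremum over $\family{C}$.

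First I would replace $\family{C}$ by a partition built from compact sets. Since $\mu$ is a Radon probability, inner regularity lets me choose, for each $i$, a compact $K_i \subset C_i$ with $\mu(C_i \setminus K_i)$ as small as desired. Setting $K_0 = X \setminus \bigcup_{i=1}^k K_i$ --- which is open, being the complement of a compact set --- the family $\family{D} = \set{K_0, K_1, \dotsc, K_k}$ is a finite measurable partition with $\mu(K_0)$ small. Because $K_i \subset C_i$ for $i \ge 1$, the partition $\family{C}$ is determined by $\family{D}$ off $K_0$, so a direct computation gives $\conditionalpartitionentropy{\mu}{\family{C}}{\family{D}} \le \mu(K_0) \log k$ (using Lemma \ref{lemma:partition_entropy:properties} on the conditional measure), which can be forced below any prescribed $\delta > 0$ by taking the $K_i$ close enough to the $C_i$. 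Lemma \ref{lemma:estimation_using_conditional_entropy} then yields $\tpartitionentropy{\mu}{\family{C}}{T} \le \tpartitionentropy{\mu}{\family{D}}{T} + \delta$.

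Next I would manufacture an admissible open cover from $\family{D}$. The sets $K_1, \dotsc, K_k$ are pairwise disjoint compacts, so (using that the ambient space is Hausdorff) I can pick pairwise disjoint open sets $U_i \supset K_i$; together with $U_0 = K_0$ these form an open cover $\family{A} = \set{U_0, U_1, \dotsc, U_k}$, which is admissible because $U_0$ has compact complement $\bigcup_{i \ge 1} K_i$. The heart of the argument is a counting estimate: each atom of $\family{D}^n$ is indexed by a string in $\set{0, \dotsc, k}^n$, and I fix a representative point in each nonempty atom. Given a member $\bigcap_j T^{-j} U_{t_j}$ of a minimal subcover of $\family{A}^n$, a representative lying in it forces $K_{s_j} \cap U_{t_j} \ne \emptyset$ coordinate by coordinate; since $U_0 = K_0$ meets only $K_0$ and each $U_l$ ($l \ge 1$) meets only $K_l$ and possibly $K_0$, there are at most two admissible values of $s_j$ per coordinate. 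Hence each member of the subcover absorbs at most $2^n$ atoms, giving $\abs{\family{D}^n} \le 2^n \covercardinality{\family{A}^n}$. Using $\partitionentropy{\mu}{\family{D}^n} \le \log \abs{\family{D}^n}$ (Lemma \ref{lemma:partition_entropy:properties}), dividing by $n$ and letting $n \to \infty$ gives $\tpartitionentropy{\mu}{\family{D}}{T} \le \log 2 + \tcoverentropy{\family{A}}{T} \le \log 2 + \topologicalentropy{T}$. Combining with the previous paragraph, letting $\delta \to 0$, and taking the supremum over $\family{C}$ yields $\measureentropy{\mu}{T} \le \topologicalentropy{T} + \log 2$.

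The remaining obstacle --- and the step I expect to be the most delicate --- is removing the additive $\log 2$, which is structural rather than an artifact of the approximation. Here I would invoke the iteration trick: the construction above depends only on $\family{C}$ and the topology, so the bound applies verbatim to $\dynamicalsystem{X}{T^m}$, giving $\measureentropy{\mu}{T^m} \le \topologicalentropy{T^m} + \log 2$. By Lemma \ref{lemma:sistema_iterado} we have $\topologicalentropy{T^m} \le m \topologicalentropy{T}$, and by Remark \ref{obs:sistema_iterado} the Kolmogorov--Sinai entropy scales exactly, $\measureentropy{\mu}{T^m} = m \measureentropy{\mu}{T}$. Substituting gives $m \measureentropy{\mu}{T} \le m \topologicalentropy{T} + \log 2$; dividing by $m$ and letting $m \to \infty$ eliminates the constant and yields $\measureentropy{\mu}{T} \le \topologicalentropy{T}$. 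Note that the direction of the inequality in Lemma \ref{lemma:sistema_iterado} is exactly the one needed here, which is why only continuity of $T$, and no properness assumption, is required.
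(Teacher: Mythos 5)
Your proposal is correct and follows essentially the same route as the paper's own proof, which is Misiurewicz's argument: inner regularity of the Radon measure to get compact sets inside the partition elements, the conditional-entropy estimate of Lemma \ref{lemma:estimation_using_conditional_entropy}, the $2^n$ counting bound relating the compact-based partition to an admissible open cover, and finally the iteration trick (applying the bound to $T^m$, using Lemma \ref{lemma:sistema_iterado} together with the exact scaling $\measureentropy{\mu}{T^m} = m\,\measureentropy{\mu}{T}$) to eliminate the additive constant. The only cosmetic deviations are your admissible cover $\set{K_0, U_1, \dotsc, U_k}$ built from disjoint open neighborhoods of the compacts, where the paper uses the strongly admissible cover $\setsuchthat{D_C \cup D^*}{C \in \family{C}}$, and your tighter bookkeeping of the error terms ($\log 2$ instead of the paper's $2 + \log 2$), neither of which changes the substance of the argument.
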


  \begin{proof}
    The last statement follows from Proposition
    \ref{prop:topological_and_bowen_entropy}.
    The first part,
    whose details we will present here,
    has actually been demonstrated by Misiurewicz.
    We start by noticing that for every Radon measure,
    the measure of a set $A$ can be approximated from
    inside by a compact $K \subset A$.

    Let $\mu$ be any $T$-invariant Radon probability measure.
    We shall show that for any $\mu$-invariant
    dynamical system $\dynamicalsystem{X}{S}$,
    \begin{equation}
      \label{eq:lemma:principio_variacional:desigualdade_facil:primeira_desigualdade}
      \measureentropy{\mu}{S}
      \leq
      \topologicalentropy{S}
      +
      2
      +
      \log 2.
    \end{equation}
    In particular,
    this is valid for $T^n$ for any $n \in \naturals$.
    And then,
    Lemma \ref{lemma:sistema_iterado}
    implies that
    \begin{align*}
      \measureentropy{\mu}{T}
      &=
      \frac{1}{n}
      \measureentropy{\mu}{T^n}
      \\
      &\leq
      \frac{1}{n}
      \topologicalentropy{T^n}
      +
      \frac{2 + \log 2}{n}
      \\
      &\leq
      \topologicalentropy{T}
      +
      \frac{2 + \log 2}{n}
      \rightarrow
      \topologicalentropy{T}.
    \end{align*}
    And this will finish the demonstration.

    In order to show the validity of
    equation \refeq{eq:lemma:principio_variacional:desigualdade_facil:primeira_desigualdade}
    for any given $\dynamicalsystem{X}{S}$,
    take a finite measurable partition $\family{C}$ such that
    $\measureentropy{\mu}{S} \leq \tpartitionentropy{\mu}{\family{C}}{S} + 1$.
    Choose for each $C \in \family{C}$,
    $D_C \subset C$ such that
    \begin{equation*}
      \mu(C \setminus D_C)
      \leq
      \frac{1}{\covercardinality{\family{C}} \log \covercardinality{\family{C}}}.
    \end{equation*}
    Let
    $D^* = \bigcup_{C \in \family{C}} (C \setminus D_C)$.
    Then,
    \begin{equation*}
      \mu(D^*)
      \leq
      \frac{1}{\log \covercardinality{\family{C}}}.
    \end{equation*}
    Now,
    define a finite measurable partition $\family{D}$
    and a strongly admissible covering $\family{A}$
    by
    \begin{align*}
      \family{D}
      &=
      \setsuchthat{D_C}{C \in \family{C}}
      \cup
      \set{D^*}
      \\
      \family{A}
      &=
      \setsuchthat{D_C \cup D^*}{C \in \family{C}}.
    \end{align*}

    \subproof
    {
      $
        \conditionalpartitionentropy{\mu}{\family{C}}{\family{D}}
        \leq
        1
      $.
    }
    {
      First,
      notice that for every $C \in \family{C}$,
      $\conditionalprobability{\mu}{D_C}{C} = 1$.
      So,
      \begin{equation*}
        \partitionentropy{\conditionalprobability{\mu}{D_C}{\cdot}}{\family{C}}
        =
        0.
      \end{equation*}
      Therefore,
      using Lemma \ref{lemma:partition_entropy:properties},
      \begin{align*}
        \conditionalpartitionentropy{\mu}{\family{C}}{\family{D}}
        &=
        \mu(D^*)
        \partitionentropy{\conditionalprobability{\mu}{D^*}{\cdot}}{\family{C}}
        \\
        &\leq
        \mu(D^*)
        \log \covercardinality{\family{C}}
        \leq
        1.
      \end{align*}
    }

    \subproof
    {
      $
        \tpartitionentropy{\mu}{\family{D}}{S}
        \leq
        \tcoverentropy{\family{A}}{S}
        +
        \log 2
      $.
    }
    {
      Let us show that
      \begin{equation*}
        \covercardinality{\family{D}^n}
        \leq
        2^n
        \covercardinality{\family{A}^n}.
      \end{equation*}
      Taking the logarithm,
      dividing by $n$ and taking the limit for
      $n \rightarrow \infty$ gives the claim.

      Let
      $\Lambda \subset \set{1, \dotsc, k}^n$
      with
      $\cardinality{\Lambda} = \covercardinality{\family{A}^n}$,
      such that
      \begin{equation*}
        X
        =
        \bigcup_{\lambda \in \Lambda} (K_0 \cup K_{\lambda_0})
        \cap \dotsb \cap
        T^{-(n-1)}(K_0 \cup K_{\lambda_{n-1}}),
      \end{equation*}
      where
      $\lambda = (\lambda_0, \dotsc, \lambda_{n-1})$.
      Consider the mapping
      \begin{equation*}
        \function{f}{\Lambda \times \set{0,1}^n}{\family{D}^n},
      \end{equation*}
      that maps $(\lambda,x)$ to $Y_0 \cap \dotsb \cap Y_{n-1}$,
      where
      \begin{equation*}
        Y_j
        =
        \begin{cases}
          T^{-j} K_{\lambda_j}, &x_j = 1
          \\
          T^{-j} K_0, &x_j = 0.
        \end{cases}
      \end{equation*}
      Since
      \begin{align*}
        X
        &=
        \bigcup_{\lambda \in \Lambda} T^0(K_{\lambda_0} \cup K_0)
        \cap \dotsb \cap T^{-(n-1)}(K_{\lambda_{n-1}} \cup K_0)
        \\
        &=
        \bigcup f(\Lambda \times \{0, 1\}^n),
      \end{align*}
      and $\family{D}^n$ is a partition,
      we have that the image of $f$
      contains every non empty element of $\family{D}^n$.
    }

    From the last claims
    and Lemma \ref{lemma:estimation_using_conditional_entropy},
    we have that
    \begin{align*}
      \measureentropy{\mu}{S}
      &\leq
      \tpartitionentropy{\mu}{\family{C}}{S}
      +
      1
      \\
      &\leq
      \tpartitionentropy{\mu}{\family{D}}{S}
      +
      2
      \\
      &\leq
      \tcoverentropy{\family{A}}{S}
      +
      2
      +
      \log 2
      \\
      &\leq
      \topologicalentropy{S}
      +
      2
      +
      \log 2,
    \end{align*}
    demonstrating
    equation \ref{eq:lemma:principio_variacional:desigualdade_facil:primeira_desigualdade},
    and concluding the proof.
  \end{proof}

  \begin{obs}
    \label{obs:principio_variacional:fortemente_admissivel}
    In Lema \ref{lemma:principio_variacional:desigualdade_facil},
    we have actually shown that
    \begin{equation*}
      \measureentropy{\mu}{T}
      \leq
      \sup_{\family{A} \text{: s-admissible}}
      \tcoverentropy{\family{A}}{T}
      \leq
      \topologicalentropy{T}.
    \end{equation*}
  \end{obs}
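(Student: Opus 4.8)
The plan is to revisit the proof of Lemma~\ref{lemma:principio_variacional:desigualdade_facil} and notice that it never exploited the full supremum defining $\topologicalentropy{\cdot}$: the only cover it produces is the single strongly admissible cover $\family{A} = \setsuchthat{D_C \cup D^*}{C \in \family{C}}$. Indeed, the $D_C$ are compact (chosen by inner regularity of the Radon measure $\mu$) and $\family{D} = \setsuchthat{D_C}{C \in \family{C}} \cup \set{D^*}$ is a finite partition of $X$, so the complement of each element $D_C \cup D^*$ is the finite union $\bigcup_{C' \neq C} D_{C'}$ of compact sets, hence compact; thus $\family{A}$ is s-admissible, exactly as already asserted in that proof. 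Reading its final chain of inequalities, we therefore obtain, for \emph{every} dynamical system $\dynamicalsystem{X}{S}$,
\[
  \measureentropy{\mu}{S}
  \leq
  \tcoverentropy{\family{A}}{S} + 2 + \log 2
  \leq
  \sup_{\family{B} \text{: s-admissible}} \tcoverentropy{\family{B}}{S} + 2 + \log 2.
\]

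The crux is to establish the s-admissible analogue of Lemma~\ref{lemma:sistema_iterado}, namely $\sup_{\family{B}\text{: s-admissible}} \tcoverentropy{\family{B}}{T^n} \leq n \sup_{\family{B}\text{: s-admissible}} \tcoverentropy{\family{B}}{T}$. Here the proof of Lemma~\ref{lemma:sistema_iterado} transports unchanged: for a fixed s-admissible cover $\family{A}$, the identity $(\family{A}_T^n)_{T^n}^m = \family{A}_T^{nm}$ gives $\tcoverentropy{\family{A}_T^n}{T^n} = n \tcoverentropy{\family{A}}{T}$, and since $\family{A} \prec \family{A}_T^n$, monotonicity (Lemma~\ref{le:entropia:propriedades_elementares}) yields $\tcoverentropy{\family{A}}{T^n} \leq n \tcoverentropy{\family{A}}{T}$. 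The delicate point is that only $\family{A}$ itself needs to be s-admissible: the refinement $\family{A}_T^n$ --- which, as Remark~\ref{obs:sistema_iterado} recalls, may fail even to be admissible --- occurs only on the iterated system $T^n$ and is immediately reabsorbed into $\tcoverentropy{\family{A}}{T} \leq \sup_{\family{B}\text{: s-admissible}} \tcoverentropy{\family{B}}{T}$. Taking the supremum over s-admissible $\family{A}$ gives the claim; note that the per-cover inequality itself uses no admissibility at all, so the restriction to s-admissible covers passes cleanly through the supremum.

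With these two facts I would repeat the iteration of the lemma: applying the displayed inequality to $S = T^n$, using $\measureentropy{\mu}{T^n} = n \measureentropy{\mu}{T}$ (Remark~\ref{obs:sistema_iterado}), dividing by $n$, and letting $n \to \infty$ eliminates the constant $2 + \log 2$ and leaves $\measureentropy{\mu}{T} \leq \sup_{\family{B}\text{: s-admissible}} \tcoverentropy{\family{B}}{T}$. The final inequality $\sup_{\family{B}\text{: s-admissible}} \tcoverentropy{\family{B}}{T} \leq \topologicalentropy{T}$ is immediate, since every s-admissible cover is admissible and the topological entropy is a supremum over the larger family. I expect the main obstacle to be precisely the s-admissible iteration inequality: one must verify that s-admissibility is demanded of $\family{A}$ alone, so that the forward bound survives even though the full equality $\topologicalentropy{T^n} = n \topologicalentropy{T}$ is unavailable.
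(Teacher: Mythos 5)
Your proposal is correct and matches the paper's intended justification: the remark is precisely the observation that the proof of Lemma \ref{lemma:principio_variacional:desigualdade_facil} only ever produces the single strongly admissible cover $\family{A}$, together with the fact that the iteration step survives restriction to s-admissible covers. Your explicit verification that the per-cover inequality $\tcoverentropy{\family{A}}{T^n} \leq n\,\tcoverentropy{\family{A}}{T}$ from Lemma \ref{lemma:sistema_iterado} demands admissibility of $\family{A}$ alone (never of $\family{A}_T^n$) is exactly the point the paper leaves implicit, so this is the same argument, carefully spelled out.
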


  \begin{proof}[Proof of Theorem \ref{th:principio_variacional}]
      Using Lemma
  \ref{lemma:principio_variacional:desigualdade_facil},
  we know that
  \begin{equation*}
    \ksentropy{T}
    \leq
    \topologicalentropy{T}
    \leq
    \inf_d
    \bowenentropy{d}{T}
    \leq
    \inf_d
    \dentropy{d}{T},
  \end{equation*}
  where the infimum is taken over every distance $d$
  compatible with the topology in $X$.
  We just have to show that if $r$ is the
  restriction to $X$ of some distance in its one point compactification,
  then,
  \begin{equation*}
    \dentropy{r}{T}
    \leq
    \ksentropy{T}.
  \end{equation*}
  In fact, this implies that
  \begin{equation*}
    \dentropy{r}{T}
    \leq
    \ksentropy{T}
    \leq
    \topologicalentropy{T}
    \leq
    \inf_d
    \bowenentropy{d}{T}
    \leq
    \inf_d
    \dentropy{d}{T}
    \leq
    \dentropy{r}{T}.
  \end{equation*}

  Using Proposition
  \ref{prop:entropia_com_conjuntos_separados},
  it remains to show that,
  for each fixed $\varepsilon > 0$ and each sequence of
  $(n,\varepsilon)$-separated sets $E_n$, there is a Radon measure
  $\mu$, which is $T$-invariant and has total measure lower then or
  equal to $1$, and there is a finite measurable partition $\family{C}$,
  such that for every $n \in \naturals$,
  \begin{equation*}
    \limsup_{n \rightarrow \infty}
    \frac{1}{n}
    \log \cardinality{E_n}
    \leq
    \tpartitionentropy{\mu}{\family{C}}{T}.
  \end{equation*}
  We shall take
  $\dynamicalsystem{Z}{S}$
  as in Lemma \ref{lemma:compactificacao},
  find a probability measure $\mu$ over the Borel sets of $Z$
  and a partition $\family{Z}$ as in
  Lemma \ref{lemma:entropia_na_compactificacao},
  and show that
  \begin{equation*}
    \limsup_{n \rightarrow \infty}
    \frac{1}{n}
    \log \cardinality{E_n}
    \leq
    \tpartitionentropy{\mu}{\family{Z}}{S}.
  \end{equation*}
  Then,
  Lemma \ref{lemma:entropia_na_compactificacao}
  will imply the desired conclusion.

  The demonstration is very similar to that of Theorem 8.6 of \cite{walters},
  the main difference is that we shall use the pseudometric $\widetilde{r}$
  induced by $r$ over $Z$
  (see comment after Lemma \ref{lemma:compactificacao}).
  Let us first build up the Radon probability measure $\mu$,
  and show that it is $S$-invariant.
  Then,
  Lemma \ref{lemma:s_invariant_is_t_invariant}
  implies that $\mu$ is $T$-invariant.
  Define
  \begin{equation*}
    \sigma_n
    =
    \frac{1}{\cardinality{E_n}} \sum_{x \in E_n} \delta_x,
  \end{equation*}
  where $\delta_x$ is the Dirac measure with support in $x$.
  Also define
  \begin{equation*}
    \mu_n = \frac{1}{n} \sum_{j=0}^{n-1} \sigma_n \circ S^{-j},
  \end{equation*}
  and notice that restricted to $X$,
  \begin{equation*}
    \mu_n = \frac{1}{n} \sum_{j=0}^{n-1} \sigma_n \circ T^{-j}.
  \end{equation*}
  In fact, since
  $S^{-j}(Z \setminus X) \subset Z \setminus X$,
  \begin{equation*}
    \mu_n(Z \setminus X)
    =
    \frac{1}{n}
    \sum_{j=0}^{n-1}
    \sigma_n
    \left(
      S^{-j}(Z \setminus X)
    \right)
    =
    0.
  \end{equation*}

  \subproof
  {
    There is a subsequence $n_k$
    and a Radon probability measure $\mu$
    such that $\mu_{n_k} \rightarrow \mu$,
    and such that
    \begin{equation*}
      \lim_{k \rightarrow \infty}
      \frac{1}{n_k}
      \log \cardinality{E_{n_k}}
      =
      \limsup_{n \rightarrow \infty}
      \frac{1}{n}
      \log \cardinality{E_n}.
    \end{equation*}
    Also,
    for any measurable $C \subset Z$
    with $\mu(\partial C) = 0$,
    \begin{equation*}
      \lim
      \mu_{n_k}(C)
      =
      \mu(C).
    \end{equation*}
  }
  {
    In the weak-$*$ topology,
    the set of Radon probability measures $\mu$ over $Z$
    is easily seen to be closed.
    The fact that $Z$ is separable implies that
    the set of Radon measures is separable.
    The Alaoglu Theorem
    (Theorem 2.5.2 of \cite{pedersen})
    implies that it is compact,
    and therefore, sequentially compact.

    It is clear that there is a subsequence $n_k$ such that
    \begin{equation*}
      \lim_{k \rightarrow \infty}
      \frac{1}{n_k}
      \log \cardinality{E_{n_k}}
      =
      \limsup_{n \rightarrow \infty}
      \frac{1}{n}
      \log \cardinality{E_n}.
    \end{equation*}
    From the sequential compacity,
    we can assume that $n_k$ is such that
    $\mu_{n_k}$ converges to some $\mu$.
    The last assertion is a consequence of the
    \emph{Portmanteau Theorem},
    and can be found in \cite{billingsley:convergence},
    Theorem 2.1, item (v).
  }

  \subproof
  {
    The measure $\mu$ is $S$-invariant.
  }
  {
    It is clear that
    $\mu_{n_k} \circ S^{-1} \rightarrow \mu \circ S^{-1}$.
    In fact,
    for any continuous $\function{\phi}{Z}{\reals}$,
    since $R$ is continuous,
    \begin{align*}
      \integral{\phi}{(\mu_{n_k} \circ S^{-1})}
      &=
      \integral{\phi \circ S}{\mu_{n_k}}
      \\
      &\rightarrow
      \integral{\phi \circ S}{\mu}
      \\
      &=
      \integral{\phi}{(\mu \circ S^{-1})}.
    \end{align*}
    On the other hand,
    \begin{align*}
      \abs{\integral{\phi}{(\mu_{n_k} - \mu_{n_k} \circ S^{-1})}}
      &=
      \abs{\integral{\phi \frac{1}{n_k}}{(\sigma_{n_k} - \sigma_{n_k} \circ S^{-n})}}
      \\
      &=
      \frac{1}{n_k}
      \abs{\integral{(\phi - \phi \circ S^{n_k})}{\sigma_{n_k}}}
      \\
      &\leq
      \frac{1}{n_k}
      \integral{\norm[\infty]{\phi - \phi \circ S^{n_k}}}{\sigma_{n_k}}
      \\
      &\leq
      \frac{1}{n_k}
      \integral{2 \norm[\infty]{\phi}}{\sigma_{n_k}}
      \\
      &=
      \frac{1}{n_k}
      2 \norm[\infty]{\phi}
      \rightarrow
      0.
    \end{align*}
    This implies that
    \begin{equation*}
      \mu
      =
      \lim \mu_{n_k}
      =
      \lim \mu_{n_k} \circ S^{-1}
      =
      \mu \circ S^{-1}.
    \end{equation*}
  }

  Now, we choose a measurable partition $\family{Z}$.
  For each $z \in Z$,
  there exists a ball
  $B_z = \ball{\varepsilon_z}{z}$ with
  $\varepsilon_z < \frac{\varepsilon}{2}$,
  such that $\mu(\partial B_z) = 0$.
  Such a $B_z$ exists because
  since the border of such balls are all disjont,
  there is at most a countable number of
  reals $\varepsilon > 0$ such that $\ball{\varepsilon}{z}$
  has border with non null measure.
  Since $Z$ is compact and the balls are open,
  there is a finite number of such balls, $B_0, \dotsc, B_n$ covering $Z$.
  We can assume that $\set{B_0, \dotsc, B_n}$ has no proper sub-cover.
  Let
  \begin{equation*}
    Z_j
    =
    B_j
    \setminus
    \left(
      B_1
      \cup \dotsb \cup
      B_{j-1}
    \right).
  \end{equation*}
  Then,
  $\family{Z} = \set{Z_0, \dotsc, Z_k}$
  is a measurable partition.
  We can also assume that
  $Z \setminus X \subset B_0 = Z_0$.
  That is,
  $\family{Z}$ satisfies the condidtions of
  Lemma \ref{lemma:entropia_na_compactificacao}.

  \subproof
  {
    For each
    $C \in \family{Z}^n$,
    $\mu(\partial C) = 0$.
  }
  {
    Notice that,
    since $S$ is continuous,
    the border operator $\partial$ possesses the following properties.
    \begin{enumerate}
      \item
        \label{it:border:property:complement}
        $\partial A = \partial \complementset{A}$.

      \item
        \label{it:border:property:intersection}
        $
          \partial (A_1 \cap \dotsb \cap A_k)
          \subset
          \partial A_1
          \cup \dotsb \cup
          \partial A_k
        $.

      \item
        \label{it:border:property:continuous}
        $
          \partial S^{-1}(A)
          \subset
          S^{-1}(\partial A)
        $.
    \end{enumerate}
    From items
    \refitem{it:border:property:complement}
    and
    \refitem{it:border:property:intersection},
    each $Z_j = B_j \cap \complementset{B_1} \cap \dotsb \cap \complementset{B_{j-1}}$
    in $\family{Z}$ has border with null measure.
    And from items
    \refitem{it:border:property:intersection}
    and
    \refitem{it:border:property:continuous},
    the same is true for the sets in $\family{Z}^n$.
  }

  Also,
  each $C \in \family{Z}^n$ has diameter less then $\varepsilon$
  in the pseudometric $\iteratedmetric{\widetilde{r}}{n}$,
  just like in the compact case.

  \subproof
  {
    $\log \cardinality{E_n}
    =
    \partitionentropy{\sigma_n}{\family{Z}^n}$.
  }
  {
    Let $C \in \family{Z}^n$.
    If $x,y \in C$, then, there exist
    $C_0, \dotsc, C_{n-1} \in \family{Z}$ such that
    $T^j x, T^j y \in C_j$ for $j = 0, \dotsc, n-1$.  Since each
    element of $\family{Z}$ has diameter less then
    $\varepsilon$, we have that
    $\iteratedmetric{d}{n}(x,y) < \varepsilon$. So, $C$
    can contain at most one element of $E_n$.  That is,
    $\sigma_n(C) = 0$ or
    $\sigma_n(C) = \frac{1}{\cardinality{E_n}}$.
    Therefore,
    \begin{equation*}
      \partitionentropy{\sigma_n}{\family{Z}^n}
      =
      \log \cardinality{E_n}.
    \end{equation*}
  }

  Passing from $\sigma_n$ to $\mu_n$
  is the same procedure as in the compact case,
  as we shall detail right now.
  Notice that for any measurable finite partition $\family{D}$,
  Lemma \ref{lemma:partition_entropy:convex_combination} implies that
  \begin{equation*}
    \sum_{j = 0}^{n-1}
    \frac{1}{n}
    \partitionentropy{\sigma_n \circ S^{-j}}{\family{D}}
    \leq
    \partitionentropy{\mu_n}{\family{D}}.
  \end{equation*}

  For
  $n, q \in \naturals$ with $1 < q < n$,
  take an integer $m$ such that $mq \geq n > m(q-1)$.
  Then, for every $j = 0, \dotsc, q-1$,
  \begin{align*}
    \family{Z}^n
    &\prec
    \family{Z}^j
    \vee
    S^{-j}
    \left(
      \family{Z}^{qm}
    \right)
    \\
    &=
    \family{Z}^j
    \vee
    S^{-j}(\family{Z}^q)
    \vee
    S^{-(j+q)}(\family{Z}^q)
    \vee \dotsb \vee
    S^{-(j+(m-1)q)}(\family{Z}^q).
  \end{align*}
  Therefore,
  using Lemma \ref{lemma:partition_entropy:properties},
  \begin{align*}
    \partitionentropy{\sigma_n}{\family{Z}^n}
    &\leq
    \partitionentropy{\sigma_n}{\family{Z}^j}
    +
    \partitionentropy{\sigma_n \circ S^{-(j + 0q)}}{\family{Z}^q}
    + \dotsb +
    \partitionentropy{\sigma_n \circ S^{-(j + (m-1)q)}}{\family{Z}^q}
    \\
    &\leq
    \partitionentropy{\sigma_n}{\family{Z}^q}
    +
    \partitionentropy{\sigma_n \circ S^{-(j + 0q)}}{\family{Z}^q}
    + \dotsb +
    \partitionentropy{\sigma_n \circ S^{-(j + (m-1)q)}}{\family{Z}^q}
    \\
    &\leq
    \log \cardinality{\family{Z}^q}
    +
    \partitionentropy{\sigma_n \circ S^{-(j + 0q)}}{\family{Z}^q}
    + \dotsb +
    \partitionentropy{\sigma_n \circ S^{-(j + (m-1)q)}}{\family{Z}^q}.
  \end{align*}
  Summing up in $j = 0, \dotsc, q-1$,
  \begin{align*}
    q \log \cardinality{E_n}
    &=
    q \partitionentropy{\sigma_n}{\family{Z}^n}
    \\
    &\leq
    q \log \cardinality{\family{Z}^q}
    +
    \sum_{j=0}^{q-1}
    \sum_{a=0}^{m-1}
    \partitionentropy{\sigma_n \circ S^{-(j + aq)}}{\family{Z}^q}
    \\
    &=
    q \log \cardinality{\family{Z}^q}
    +
    \sum_{p=0}^{n-1}
    \partitionentropy{\sigma_n \circ S^{-p}}{\family{Z}^q}
    +
    \sum_{p=n}^{mq-1}
    \partitionentropy{\sigma_n \circ S^{-p}}{\family{Z}^q}
    \\
    &\leq
    2 q \log \cardinality{\family{Z}^q}
    +
    n
    \sum_{p=0}^{n-1}
    \frac{1}{n}
    \partitionentropy{\sigma_n \circ S^{-p}}{\family{Z}^q}
    \\
    &\leq
    2 q \log \cardinality{\family{Z}^q}
    +
    n \partitionentropy{\mu_n}{\family{Z}^q}.
  \end{align*}
  Since each element $C \in \family{Z}^q$ has border with null measure,
  \begin{equation*}
    \lim_{k \rightarrow \infty}
    \mu_{n_k}(C)
    =
    \mu(C).
  \end{equation*}
  Therefore, dividing by $qn$ and making $k \rightarrow \infty$,
  \begin{align*} 
    \limsup_{n \rightarrow \infty}
    \frac{1}{n}
    \log \cardinality{E_n}
    &=
    \lim_{k \rightarrow \infty}
    \frac{1}{n_k}
    \log \cardinality{E_{n_k}}
    \\
    &\leq
    0
    +
    \frac{1}{q}
    \lim_{k \rightarrow \infty}
    \partitionentropy{\mu_{n_k}}{\family{Z}^q}
    \\
    &=
    \frac{1}{q}
    \partitionentropy{\mu}{\family{Z}^q}
  \end{align*}
  for every $q$.
  Now, one just has to take the limit with $q \rightarrow \infty$
  to get
  \begin{equation*}
    \limsup_{n \rightarrow \infty}
    \frac{1}{n} \log \cardinality{E_n}
    \leq
    \tpartitionentropy{\mu}{\family{Z}}{S}.
  \end{equation*}

  \end{proof}

  %% \begin{corollary}
  %%   \label{corollary:subsistema}
  %%   Consider the metrizable locally compact separable dynamical system
  %%   $\dynamicalsystem{X}{T}$.
  %%   If $Y \subset X$ is an invariant locally compact subset,
  %%   Then,
  %%   \begin{equation*}
  %%     \topologicalentropy{T|_Y}
  %%     \leq
  %%     \topologicalentropy{T}.
  %%   \end{equation*}
  %% \end{corollary}

  %% \begin{proof}
  %%   The subset $Y$ is separable since
  %%   being second countable is an hereditary topological property.
  %%   It is also measurable,
  %%   since it is a countable union of compact sets.
  %%   Any $T$-invariant probability $\mu$ over $Y$
  %%   is the restriction to $Y$ of
  %%   a $T$-invariant probability measure $\nu$ over $X$.
  %%   And in case of the Kolmogorov-Sinai entropy,
  %%   \begin{equation*}
  %%     \measureentropy{\mu}{T|_Y}
  %%     \leq
  %%     \measureentropy{\nu}{T}.
  %%   \end{equation*}
  %%   Therefore,
  %%   the \emph{Variational Principle} implies that
  %%   \begin{equation*}
  %%     \topologicalentropy{T|_Y}
  %%     =
  %%     \ksentropy{T|_Y}
  %%     \leq
  %%     \ksentropy{T}
  %%     =
  %%     \topologicalentropy{T}.
  %%   \end{equation*}
  %% \end{proof}

  The inequality at Lemma
  \ref{lemma:sistema_iterado}
  was used to demonstrate the variational principle.
  Now, in its turn, the variational principle allows us
  to go a bit further.

  \begin{corollary}
    \label{corollary:sistema_iterado}
    Consider the metrizable locally compact separable dynamical system
    $\dynamicalsystem{X}{T}$,
    and let $k \in \naturals$.
    Then,
    \begin{equation*}
      \topologicalentropy{T^k}
      =
      k \topologicalentropy{T}.
    \end{equation*}
  \end{corollary}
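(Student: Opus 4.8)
The plan is to combine the inequality already recorded in Lemma \ref{lemma:sistema_iterado}, namely $\topologicalentropy{T^k} \leq k \topologicalentropy{T}$, with its reverse, and for the reverse I would pass through the variational principle. Since $X$ is metrizable, locally compact and separable, Theorem \ref{th:principio_variacional} applies both to $T$ and to the system $\dynamicalsystem{X}{T^k}$ on the same space, so that $\topologicalentropy{T} = \ksentropy{T}$ and $\topologicalentropy{T^k} = \ksentropy{T^k}$. This reduces the whole statement to a claim about the Kolmogorov--Sinai entropy, where the behaviour under iteration is linear.

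Concretely, I would first invoke the identity recorded in Remark \ref{obs:sistema_iterado}, that $\measureentropy{\mu}{T^k} = k \measureentropy{\mu}{T}$ for every $T$-invariant finite measure $\mu$. Next I would observe that every $T$-invariant Radon probability measure is in particular $T^k$-invariant, so the supremum defining $\ksentropy{T^k}$ is taken over a class of measures containing all the $T$-invariant ones. Putting these two facts together, the argument runs as
\begin{equation*}
  k \topologicalentropy{T}
  =
  k \ksentropy{T}
  =
  \sup_{\mu}
  k \measureentropy{\mu}{T}
  =
  \sup_{\mu}
  \measureentropy{\mu}{T^k}
  \leq
  \ksentropy{T^k}
  =
  \topologicalentropy{T^k},
\end{equation*}
where the first and last equalities are the variational principle, the middle supremum is taken over $T$-invariant Radon probability measures, and the inequality is the passage to the larger class of $T^k$-invariant measures. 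Combined with Lemma \ref{lemma:sistema_iterado}, this gives the desired equality.

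The one point that needs care is the degenerate case in which $T$ admits no invariant probability measure: by our convention $\ksentropy{T} = 0$, so the variational principle forces $\topologicalentropy{T} = 0$, and then $k \topologicalentropy{T} = 0 \leq \topologicalentropy{T^k}$ holds trivially because cover entropies are always nonnegative. I expect the only genuinely substantive ingredient to be the identity $\measureentropy{\mu}{T^k} = k \measureentropy{\mu}{T}$; the remainder is bookkeeping of suprema. The reason this ingredient is available here, whereas the purely topological argument of Lemma \ref{lemma:sistema_iterado} only yielded one inequality, is precisely that the Kolmogorov--Sinai supremum ranges over \emph{all} finite measurable partitions, a class closed under $\family{C} \mapsto \family{C}_T^k$, while the class of admissible covers is not closed under $\family{A} \mapsto \family{A}_T^k$.
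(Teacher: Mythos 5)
Your proposal is correct and takes essentially the same route as the paper: the paper's proof consists of exactly the two ingredients you use, namely the identity $\measureentropy{\mu}{T^k} = k \measureentropy{\mu}{T}$ from Remark \ref{obs:sistema_iterado} together with the variational principle of Theorem \ref{th:principio_variacional} applied to both $T$ and $T^k$ (with Lemma \ref{lemma:sistema_iterado} supplying the inequality the topological argument alone can reach). The only difference is that the paper states this in two lines, while you spell out the bookkeeping of suprema and the degenerate case of no invariant probability measure --- details the paper leaves implicit but which your write-up handles correctly.
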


  \begin{proof}
    Equality holds for the Kolmogorov-Sinai entropy
    (Remark
    \ref{obs:sistema_iterado}).
    Therefore, the variational principle implies that it is
    valid for the topological entropy as well.
  \end{proof}
}
  {%-*- ispell-dictionary: "english" -*-

\section{Application: Lie group endomorphisms}
  \label{sec:application}

  We finish
  by considering the entropy of continuous endomorphisms of Lie groups.
  We extend some results of \cite{patrao_caldas:endomorfismos} to
  endomorphisms which are not necessarily surjective.
  For a given Lie group $G$, its toral component $T(G)$ is the maximal
  connected and compact subgroup of the center of $G$.
  We aim at demonstrating that in certain cases
  (compact, semisimple, linear reducible and nilpotent),
  the entropy of an endomorphism of a Lie group $G$
  is the entropy of the endomorphism restricted to the toral component $T(G)$.

  In order to reduce the general case to the cases covered in
  \cite{patrao_caldas:endomorfismos},
  a key point is the following lemma,
  which depends on the \emph{variational principle} we have developed.

  \begin{lemma}
    \label{le:restricao_aa_imagem_da_potencia_do_endomorfismo}
    Let $G$ be a connected Lie group and
    $\function{\phi}{G}{G}$
    be a continuous endomorphism.
    There exists a natural number $n$ such that $\phi$ restricted to
    $H = \phi^n(G)$ is surjective.
    In this case,
    \begin{equation*}
      \topologicalentropy{\phi}
      =
      \topologicalentropy{\phi|_H}.
    \end{equation*}
  \end{lemma}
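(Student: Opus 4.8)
The plan is to prove the two assertions separately: first the structural claim that $\phi$ eventually restricts to a surjection, and then the dynamical equality of entropies, where the variational principle (Theorem \ref{th:principio_variacional}) does the real work.

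First I would settle surjectivity. A continuous endomorphism of a Lie group is automatically smooth, so each image $\phi^k(G)$ is a connected immersed Lie subgroup of $G$ and
\[
  G \supseteq \phi(G) \supseteq \phi^2(G) \supseteq \dotsb
\]
is a decreasing chain. The dimensions $\dim \phi^k(G)$ form a non-increasing sequence of non-negative integers, so they stabilize: there is $n \in \naturals$ with $\dim \phi^{n+1}(G) = \dim \phi^n(G)$. Set $H = \phi^n(G)$. Then $\phi(H) = \phi^{n+1}(G)$ is a connected subgroup of $H$ whose Lie algebra is $\mathrm{d}\phi(\mathfrak{h})$, where $\mathfrak{h}$ is the Lie algebra of $H$; since it has the same dimension as $\mathfrak{h}$, we get $\mathrm{d}\phi(\mathfrak{h}) = \mathfrak{h}$, and, $H$ being connected, $\phi(H) = H$. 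Thus $\phi|_H \colon H \to H$ is surjective.

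For the entropy equality I would apply the variational principle to both systems (each of $G$ and $H$ is a connected Lie group, hence locally compact, separable and metrizable, so Theorem \ref{th:principio_variacional} applies). The key observation is that every $\phi$-invariant Radon probability measure $\mu$ on $G$ is concentrated on $H$. Note $H$ is $\sigma$-compact, hence Borel in $G$. Invariance means $\phi_*\mu = \mu$, so $(\phi^n)_*\mu = \mu$; since $\phi^n(G) \subseteq H$, the set $(\phi^n)^{-1}(G \setminus H)$ is empty, whence
\[
  \mu(G \setminus H) = \bigl((\phi^n)_*\mu\bigr)(G \setminus H) = \mu\bigl((\phi^n)^{-1}(G \setminus H)\bigr) = 0.
\]
Therefore $\mu$ restricts to a $\phi|_H$-invariant probability on $H$, and conversely any $\phi|_H$-invariant probability on $H$ extends by zero to a $\phi$-invariant probability on $G$. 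Because $\mu$ lives on $H$, the partition entropies computed with partitions of $G$ agree with those computed with their restrictions to $H$, so $\measureentropy{\mu}{\phi} = \measureentropy{\mu}{\phi|_H}$ for each such measure. Taking suprema and using $\ksentropy{\phi} = \topologicalentropy{\phi}$, together with the analogous identity for $\phi|_H$, yields $\topologicalentropy{\phi} = \topologicalentropy{\phi|_H}$.

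The main obstacle I anticipate is topological rather than dynamical: $H = \phi^n(G)$ is a priori only an immersed subgroup, so its intrinsic Lie group topology may be strictly finer than the one induced from $G$. One must fix the topology in which the subsystem $(H, \phi|_H)$ and its entropy are taken, verify that Theorem \ref{th:principio_variacional} genuinely applies to $H$ in that topology, and check that the correspondence between $\phi$-invariant Radon measures on $G$ carried by $H$ and $\phi|_H$-invariant Radon measures on $H$ is a bijection preserving Kolmogorov--Sinai entropy. Reconciling the two topologies --- matching their Borel structures and the Radon property --- is the delicate point; once the topology is fixed, the measure-theoretic computation above is routine.
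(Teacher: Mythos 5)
Your proposal is correct and follows essentially the same route as the paper: the paper also stabilizes the images via the induced linear map $\phi'$ on $\liealgebra{g}$ (your dimension count is the same argument), shows every $\phi$-invariant measure is carried by $H$ via $\phi^{-n}(H) = G$ (your $\mu\bigl((\phi^n)^{-1}(G \setminus H)\bigr) = 0$ computation), and transfers through the variational principle applied to both $G$ and $H$. The immersed-subgroup subtlety you flag is dispatched in the paper exactly by the facts you already list --- $H$ is $\sigma$-compact and its compact sets are compact in $G$, so $H$ is Borel in $G$ and the two Borel structures agree on $H$ --- so your attempt matches the paper's proof in substance.
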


  \begin{proof}
    For the first claim, we first consider the induced Lie algebra
    endomorphism
    $\function{\phi'}{\liealgebra{g}}{\liealgebra{g}}$
    given by the differential of $\phi$ at the identity of $G$.
    Since $\phi'$ is a linear map, there is a natural number $n$ such
    that $(\phi')^n \liealgebra{g} = (\phi')^{n+1} \liealgebra{g}$.
    Putting $H = \phi^n(G)$, we have that its Lie algebra is given by
    $\liealgebra{h} = (\phi')^n \liealgebra{g}$.
    We have that $\phi(H) = H$, since both are connected subgroups and
    the Lie algebra of $\phi(H)$ is
    $\phi' \liealgebra{h} = \liealgebra{h}$.
    Thus $\phi$ restricted to $H$ is surjective.

    For the second claim,
    notice that, as Lie groups,
    $H$ and $G$ satisfy the conditions of
    Theorem \ref{th:principio_variacional}.
    Also,
    since $H$ is a countable union of compact subsets,
    and compact sets of $H$ are also compact sets of $G$,
    $H$ is a measurable subset of $G$.
    Every $\phi$-invariant measure $\mu$ is such that
    \begin{align*}
      \mu(H)
      &=
      \mu
      \left(
        \phi^{-n}(H)
      \right)
      \\
      &=
      \mu(G).
    \end{align*}
    Therefore,
    \begin{align*}
      \topologicalentropy{\phi}
      &=
      \ksentropy{\phi}
      \\
      &=
      \ksentropy{\phi|_H}
      \\
      &=
      \topologicalentropy{\phi|_{H}}.
    \end{align*}
  \end{proof}

  Using the conclusion and notation of
  Lemma \ref{le:restricao_aa_imagem_da_potencia_do_endomorfismo},
  the results in
  \cite{patrao_caldas:endomorfismos}
  imply that in the cases we are considering,
  \begin{equation*}
    \topologicalentropy{\phi}
    =
    \topologicalentropy{\phi|_{H}}
    =
    \topologicalentropy{\phi|_{T(H)}}.
  \end{equation*}
  In order to substitute $T(H)$ by $T(G)$,
  we need to show that in those same cases,
  $T(H) \subset T(G)$.
  This is the content of the next lemma.

  \begin{lemma}
    \label{le:th_tg}
    Let $G$ be a connected reductive or connected nilpotent Lie group
    and $\function{\phi}{G}{G}$
    a continuous endomorphism.
    If $H$ is as in the conclusion of
    Lemma \ref{le:restricao_aa_imagem_da_potencia_do_endomorfismo},
    then $T(H) \subset T(G)$.
  \end{lemma}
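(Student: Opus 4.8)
The plan is to turn the inclusion into a \emph{centrality} statement. Since $T(H)$ is a torus, it is compact and connected, so it suffices to prove $T(H)\subseteq Z(G)$: once $T(H)$ is known to lie in the center, it is a compact connected subgroup of $Z(G)$ and is therefore contained in the maximal connected compact subgroup of $Z(G)$, which is precisely $T(G)$. Thus in either case the problem reduces to showing that the central torus of $H$ already commutes with all of $G$. Before splitting into cases I would record two facts coming from Lemma \ref{le:restricao_aa_imagem_da_potencia_do_endomorfismo}: that $\phi(H)=H$, and that consequently $\phi(T(H))=T(H)$, since $\phi(T(H))$ is a compact connected central subgroup of $H=\phi(H)$, hence lies in $T(H)$, with equality because $\phi'$ restricts to an automorphism of $\liealgebra{h}=(\phi')^n\liealgebra{g}$.

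For the \emph{nilpotent} case the centrality is automatic. A connected nilpotent Lie group possesses a unique maximal compact subgroup; this subgroup is a torus, it is contained in the center, and by maximality among connected compact central subgroups it coincides with $T(G)$. Since $T(H)$ is a compact subgroup of $G$, it is contained in that unique maximal compact subgroup, so $T(H)\subseteq T(G)$ and this case is finished.

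For the \emph{reductive} case I would pass to the semisimple quotient. Write $\liealgebra{g}=\liealgebra{z}\oplus\liealgebra{s}$ with $\liealgebra{z}$ the center and $\liealgebra{s}=[\liealgebra{g},\liealgebra{g}]$ semisimple, and let $\pi\colon G\to \bar{G}:=G/Z(G)^0$ be the quotient onto the connected semisimple group with Lie algebra $\liealgebra{g}/\liealgebra{z}\cong\liealgebra{s}$. The kernel of $\pi$ is the central subgroup $Z(G)^0$, so it is enough to show that $\pi(T(H))$ is trivial. Now $\pi(T(H))$ is a torus, and since $T(H)$ is central in $H$ its image is central in $\pi(H)$; hence the claim amounts to showing that the connected subgroup $\pi(H)\subseteq\bar{G}$ has trivial central torus. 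I would analyze $\pi(H)$ through its Lie algebra $\bar{\liealgebra{h}}$, the image of $\liealgebra{h}=(\phi')^n\liealgebra{g}$ in $\liealgebra{s}$, using the Fitting decomposition $\liealgebra{g}=\liealgebra{h}\oplus\liealgebra{k}$ with $\liealgebra{k}=\ker\bigl((\phi')^n\bigr)$ an ideal on which $\phi'$ is nilpotent and $\phi'|_{\liealgebra{h}}$ invertible. Because $\phi'(\liealgebra{s})\subseteq\liealgebra{s}$ and homomorphic images of semisimple Lie algebras are semisimple, the semisimple part of $\liealgebra{h}$ is carried faithfully into $\liealgebra{s}$, while the only new central directions of $\bar{\liealgebra{h}}$ are those produced by applying $(\phi')^n$ to $\liealgebra{z}$ and therefore living in the $\phi'$-nilpotent block $\liealgebra{k}$; such directions are unipotent and cannot contribute to a compact torus. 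This is what should force the central torus of $\pi(H)$ to vanish, i.e. $T(H)\subseteq\ker\pi\subseteq Z(G)$.

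The hard part will be exactly this last reductive step, and its source is that $\phi'$ need \emph{not} preserve the center $\liealgebra{z}$ (a central generator may be pushed into $\liealgebra{s}$), so one cannot simply descend $\phi'$ to $\liealgebra{s}$. The delicate point is therefore to separate the \emph{elliptic} (compact, torus-generating) directions in the center of $\liealgebra{h}$ from the \emph{unipotent} directions emerging from $\liealgebra{z}$: concretely, one must prove that any $X\in\liealgebra{h}$ which is central in $\liealgebra{h}$ and generates a compact one-parameter subgroup already centralizes all of $\liealgebra{g}$, i.e. lies in $\liealgebra{z}$. I expect this to follow by combining the Fitting decomposition above with the semisimplicity of $\ad X$ for elliptic $X$: an elliptic element whose image in $\liealgebra{s}$ is central in $\bar{\liealgebra{h}}$, yet which can only acquire an $\liealgebra{s}$-component through the $\phi'$-nilpotent block $\liealgebra{k}$, must have vanishing $\liealgebra{s}$-component. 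With this centrality in hand, the reduction of the first paragraph together with the nilpotent case completes the proof that $T(H)\subseteq T(G)$.
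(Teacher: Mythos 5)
Your opening reduction is sound (a compact connected subgroup of $Z(G)$ automatically lies in $T(G)$, so centrality in $G$ is all that is needed), and your nilpotent case is complete and correct --- in fact more direct than the paper's own treatment, which routes through the projection $\pi_H\colon H \to H/(H\cap T(G))$ and the simple connectedness of $G/T(G)$, whereas you only invoke the standard fact that every compact connected subgroup of a connected nilpotent Lie group is contained in its central torus. The reductive case, however, is a genuine gap, and you say so yourself: the decisive assertion is deferred (``this is what should force\dots'', ``I expect this to follow\dots''), and the mechanism you sketch is internally inconsistent. Elements of $Z(\liealgebra{h})=(\phi')^n(\liealgebra{z})$ lie in $\liealgebra{h}$, so their $\liealgebra{s}$-components cannot ``live in the $\phi'$-nilpotent block $\liealgebra{k}$'' in any sense that helps: $\liealgebra{h}\cap\liealgebra{k}=0$, and the Fitting decomposition gives no control whatsoever on the $\liealgebra{z}\oplus\liealgebra{s}$-decomposition of elements of $Z(\liealgebra{h})$.

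Worse, the centrality claim you defer --- that an elliptic $X\in\liealgebra{h}$ central in $\liealgebra{h}$ must lie in $\liealgebra{z}(\liealgebra{g})$ --- is false, so no argument can close this gap. Take $G=U(1)\times SU(2)$ (compact connected, hence linear reductive), let $\gamma(e^{i\theta})=\mathrm{diag}(e^{i\theta},e^{-i\theta})$, and define the continuous endomorphism
\begin{equation*}
  \phi(u,g)=(u,\gamma(u)).
\end{equation*}
Then $\phi^2=\phi$, so $H=\phi(G)=\setsuchthat{(u,\gamma(u))}{u\in U(1)}$ is a circle on which $\phi$ restricts to the identity; this $H$ is exactly the subgroup produced by Lemma \ref{le:restricao_aa_imagem_da_potencia_do_endomorfismo} (with $n=1$). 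Here $T(H)=H$, while $T(G)=U(1)\times\set{I}$, so $T(H)\not\subset T(G)$. At the Lie algebra level, $X=(1,K)$ with $K=\mathrm{diag}(i,-i)$ is central in $\liealgebra{h}$, generates the compact group $H$, and has $\ad X$ semisimple, yet $X\notin\liealgebra{z}(\liealgebra{g})$ --- precisely the configuration your last paragraph declares impossible. So the ``hard part'' you flagged is not merely hard: the reductive half of the lemma fails, and the same example locates the flaw in the paper's own proof, namely the asserted isomorphism $G/T(G)\simeq HT(G)/T(G)$ (here $SU(2)$ versus a circle), which presupposes $HT(G)=G$; that holds when $\phi$ is surjective but not in general, and the example even gives $\phi(T(G))\not\subset T(G)$, so the statement of Proposition \ref{proposition:compact_group} is affected as well. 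Your nilpotent argument proves the part of the lemma that is actually true.
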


  \begin{proof}
    Let $\varphi = \phi^n$ be as in
    Lemma \ref{le:restricao_aa_imagem_da_potencia_do_endomorfismo}.
    Then $\varphi$ induces two surjective homomorphisms
    \begin{align*}
      &
      \functionarray{\varphi_1}{G}{G/T(G)}{x}{\pi(\varphi(x))}
      \\
      &
      \functionarray{\varphi_2}{H}{G/T(G)}{x}{\pi(\varphi(x))},
    \end{align*}
    where $\function{\pi}{G}{G/T(G)}$ is the natural projection.

    Also,
    consider the natural projection
    \begin{equation*}
      \function{\pi_H}{H}{\frac{H}{H \cap T(G)}}.
    \end{equation*}
    Since the image of $\varphi_1$ and $\varphi_2$ are
    $\frac{H T(G)}{T(G)}$,
    \begin{align*}
      G/T(G)
      &\simeq
      \frac{H T(G)}{T(G)}
      \\
      &\simeq
      \frac{H}{H \cap T(G)}
      \\
      &=
      \pi_H(H).
    \end{align*}
    Because $\pi_H$ is surjective,
    it takes the center of $H$ to the center of $\pi_H(H)$.
    In particular,
    $\pi_H(T(H))$ is compact, connected and contained in the center.
    That is,
    \begin{equation}
      \label{eq:th_tg}
      \pi_H(T(H))
      \subset
      T(\pi_H(H)).
    \end{equation}

    We claim that $T(H) \subset T(G)$.
    And this claim follows from
    equation \refeq{eq:th_tg} if we show that
    $T(\pi(H)) = 1$.
    In fact,
    together with equation \refeq{eq:th_tg},
    this implies that
    \begin{equation*}
      T(H)
      \subset
      \ker \pi_H
      =
      H \cap T(G).
    \end{equation*}

    In case $G$ is reductive,
    $G/T(G)$ is semisimple,
    so is $\pi_H(H)$,
    and semisimple groups have trivial toral component.
    In case $G$ is nilpotent,
    Proposition 8 in \cite{patrao_caldas:endomorfismos}
    shows that $G/T(G)$ is simply connected,
    and so is $\pi_H(H)$.
    Since $\pi_H(H)$ is nilpotent and simply connected,
    $Z(\pi_H(H))$ is isomorphic to a finite dimensional vector space,
    and the only compact subgroup of it is the trivial one.
    Therefore,
    the toral component of $\pi_H(H)$ is trivial.
  \end{proof}

  \begin{proposition}
    \label{proposition:compact_group}
    Let $G$ be a connected and compact Lie group and consider
    $\function{\phi}{G}{G}$ a continuous endomorphism.
    Then
    \begin{equation*}
      \topologicalentropy{\phi}
      =
      \topologicalentropy{\phi|_{T(G)}}.
    \end{equation*}
  \end{proposition}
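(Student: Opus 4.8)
The plan is to turn the three displayed equalities sketched in the paragraph preceding the statement into a rigorous chain. First I would invoke Lemma~\ref{le:restricao_aa_imagem_da_potencia_do_endomorfismo} to produce a natural number $n$ such that, writing $H = \phi^n(G)$, the restriction $\phi|_H$ is surjective and $\topologicalentropy{\phi} = \topologicalentropy{\phi|_H}$. Since $H$ is the continuous homomorphic image of the compact connected group $G$, it is again compact and connected, and $\phi|_H$ is now a surjective endomorphism; hence the compact-case formula of \cite{patrao_caldas:endomorfismos} applies and gives $\topologicalentropy{\phi|_H} = \topologicalentropy{\phi|_{T(H)}}$, where $\phi(T(H)) \subseteq T(H)$ because $\phi|_H$ is onto, so this restriction is a genuine self-map. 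A compact connected Lie group is reductive, so Lemma~\ref{le:th_tg} yields $T(H) \subseteq T(G)$.

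At this point everything reduces to the single equality $\topologicalentropy{\phi|_{T(H)}} = \topologicalentropy{\phi|_{T(G)}}$, that is, to justifying the substitution of $T(H)$ by $T(G)$. The key observation I would establish is that $\phi^n$ maps $T(G)$ back inside $T(H)$: for $z \in T(G) \subseteq Z(G)$ and any $w = \phi^n(g) \in H$ one has $\phi^n(z)\,w = \phi^n(zg) = \phi^n(gz) = w\,\phi^n(z)$, so $\phi^n(z) \in Z(H)$; as $\phi^n(T(G))$ is a torus contained in $Z(H)$ it lies in the maximal central torus, giving $\phi^n(T(G)) \subseteq T(H) \subseteq T(G)$. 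In particular $\phi^n$ restricts to a self-endomorphism of the torus $T(G)$, even though $\phi$ itself need not preserve $T(G)$.

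From the inclusions $\phi^n(T(G)) \subseteq T(H) \subseteq T(G)$ I would argue that the descending chains of images $\phi^{nk}(T(G))$ and $\phi^{nk}(T(H))$ stabilise to a common torus $T_0$: the inclusion $T(H) \subseteq T(G)$ gives $\bigcap_k \phi^{nk}(T(H)) \subseteq \bigcap_k \phi^{nk}(T(G))$, while $\phi^{n}(T(G)) \subseteq T(H)$ gives the reverse inclusion after shifting the index by one. Applying Lemma~\ref{le:restricao_aa_imagem_da_potencia_do_endomorfismo} to the torus systems $(T(G), \phi^n|_{T(G)})$ and $(T(H), \phi^n|_{T(H)})$ then yields $\topologicalentropy{\phi^n|_{T(G)}} = \topologicalentropy{\phi^n|_{T_0}} = \topologicalentropy{\phi^n|_{T(H)}}$. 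Finally, since both tori are compact metric (hence satisfy the hypotheses of Theorem~\ref{th:principio_variacional}), Corollary~\ref{corollary:sistema_iterado} converts the iterate entropies back, $\topologicalentropy{\phi^n|_{T(H)}} = n\,\topologicalentropy{\phi|_{T(H)}}$ and likewise on $T(G)$; combining this with the first chain produces $\topologicalentropy{\phi} = \topologicalentropy{\phi|_H} = \topologicalentropy{\phi|_{T(H)}} = \topologicalentropy{\phi|_{T(G)}}$.

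The step I expect to be the genuine obstacle is precisely this last substitution, because $\phi$ need not map the central torus $T(G)$ into itself; the restriction $\phi|_{T(G)}$ only becomes a bona fide dynamical system once one notices that the well-behaved object is the iterate $\phi^n$, which \emph{does} preserve $T(G)$ thanks to $\phi^n(T(G)) \subseteq T(H)$. Making the eventual-image argument precise --- that passing to the smaller torus $T(H)$ loses no entropy because the asymptotic dynamics of $\phi^n$ already lives inside $T(H)$ --- is where all the earlier machinery (Lemma~\ref{le:restricao_aa_imagem_da_potencia_do_endomorfismo} and Corollary~\ref{corollary:sistema_iterado}, hence the variational principle of Theorem~\ref{th:principio_variacional}) is really used.
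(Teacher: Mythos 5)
Your first paragraph is, in outline, exactly the paper's proof: Lemma \ref{le:restricao_aa_imagem_da_potencia_do_endomorfismo} produces $H = \phi^n(G)$ with $\topologicalentropy{\phi} = \topologicalentropy{\phi|_H}$, Theorem 6.2 of \cite{patrao_caldas:endomorfismos} applied to the surjective restriction $\phi|_H$ gives $\topologicalentropy{\phi|_H} = \topologicalentropy{\phi|_{T(H)}}$, and Lemma \ref{le:th_tg} (a compact connected group being reductive) gives $T(H) \subseteq T(G)$. At that point the paper concludes in a single line, by monotonicity of topological entropy under restriction to closed invariant subsets:
$\topologicalentropy{\phi} = \topologicalentropy{\phi|_{T(H)}} \leq \topologicalentropy{\phi|_{T(G)}} \leq \topologicalentropy{\phi}$.
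Everything after your first paragraph is your replacement for this one line, and that is where the problem sits.

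The gap is at the very end, in ``and likewise on $T(G)$''. Your intermediate work is sound: $\phi^n(T(G))$ is a torus inside $Z(H)$, hence inside $T(H)$; the interleaved chains $\phi^{n(k+1)}(T(G)) \subseteq \phi^{nk}(T(H)) \subseteq \phi^{nk}(T(G))$ stabilize to a common torus $T_0$; and applying Lemma \ref{le:restricao_aa_imagem_da_potencia_do_endomorfismo} to the genuine endomorphisms $\phi^n|_{T(G)}$ and $\phi^n|_{T(H)}$ yields $\topologicalentropy{\phi^n|_{T(G)}} = \topologicalentropy{\phi^n|_{T_0}} = \topologicalentropy{\phi^n|_{T(H)}}$. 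But the conversion $\topologicalentropy{\phi^n|_{T(G)}} = n\,\topologicalentropy{\phi|_{T(G)}}$ via Corollary \ref{corollary:sistema_iterado} presupposes that $\phi^n|_{T(G)}$ is the $n$-th iterate of a dynamical system on $T(G)$, i.e. that $\phi(T(G)) \subseteq T(G)$ --- precisely the invariance you yourself flagged as possibly failing. If it fails, then $\topologicalentropy{\phi|_{T(G)}}$ is not defined by any definition in the paper, the corollary is inapplicable, and no manipulation of $\phi^n$ can produce it: your detour relocates the obstacle rather than removing it. The worry is not idle --- for $G = SU(2) \times S^1$ and $\phi(g,z) = (\iota(z), 1)$, with $\iota$ embedding $S^1$ as a maximal torus of $SU(2)$, one has $\phi(T(G)) \not\subseteq T(G)$ --- but it is a question about how the statement must be read, not one a proof can repair, since the conclusion itself is phrased in terms of $\phi|_{T(G)}$. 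Conversely, under the reading that makes the right-hand side meaningful, namely $\phi(T(G)) \subseteq T(G)$, your second and third paragraphs are superfluous: monotonicity along the closed invariant subsets $T(H) \subseteq T(G) \subseteq G$ gives the sandwich at once, which is exactly what the paper does.
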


  \begin{proof}
    Consider $H = \phi^n(G)$ given by Lemma
    \ref{le:restricao_aa_imagem_da_potencia_do_endomorfismo}.
    We have that $H$ is a connected and compact Lie group.
    Using Lemma
    \ref{le:restricao_aa_imagem_da_potencia_do_endomorfismo},
    and applying Theorem 6.2 of
    \cite{patrao_caldas:endomorfismos} for $\phi$ restricted to $H$,
    we get that
    \begin{equation*}
      \topologicalentropy{\phi}
      =
      \topologicalentropy{\phi|_{H}}
      =
      \topologicalentropy{\phi|_{T(H)}}.
    \end{equation*}
    Now,
    Lemma \ref{le:th_tg}
    implies that $T(H) \subset T(G)$,
    since a compact Lie group is reductive.
    Therefore,
    we have that
    \begin{equation*}
      \topologicalentropy{\phi}
      =
      \topologicalentropy{\phi|_{T(H)}}
      \leq
      \topologicalentropy{\phi|_{T(G)}}
      \leq
      \topologicalentropy{\phi}.
    \end{equation*}
  \end{proof}

  In the proof of
  Proposition \ref{proposition:compact_group},
  it was evident that $H = \phi^n(G)$ is a compact group.
  It is also evident that $H$ is nilpotent when $G$ is.
  But for the semisimple and reductive cases,
  we need the following lemma.

  \begin{lemma}
    \label{le:imagem_de_grupo}
    Let $G$ be a semisimple,
    or reductive,
    or nilpotent Lie group.
    If
    $\function{\phi}{G}{H}$
    is a surjective Lie group endomorphism,
    then $H$ is respectively
    semisimple,
    reductive
    or nilpotent.
  \end{lemma}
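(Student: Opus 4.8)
The plan is to pass to the level of Lie algebras, where all three properties admit clean characterizations, and then to observe that each of them is inherited by quotients. Write $\liealgebra{g}$ and $\liealgebra{h}$ for the Lie algebras of $G$ and $H$, and let $\function{\phi'}{\liealgebra{g}}{\liealgebra{h}}$ be the differential of $\phi$ at the identity, as in Lemma \ref{le:restricao_aa_imagem_da_potencia_do_endomorfismo}. Since $\phi$ is a surjective homomorphism and $G$ is connected (so that $H = \phi(G)$ is connected as well), the image $\phi(G)$ is the integral subgroup of $\phi'(\liealgebra{g})$ and equals all of $H$; connectedness of $H$ then forces $\phi'$ to be surjective. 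Hence $\liealgebra{h} \cong \liealgebra{g}/\liealgebra{k}$, where $\liealgebra{k} = \ker \phi'$ is an ideal of $\liealgebra{g}$. Recall that, for a connected Lie group, being semisimple, reductive, or nilpotent is equivalent to the corresponding property of its Lie algebra, so it suffices to show that a quotient of a semisimple (resp.\ reductive, nilpotent) Lie algebra is again semisimple (resp.\ reductive, nilpotent).

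The nilpotent and semisimple cases are straightforward. If $\liealgebra{g}$ is nilpotent, its lower central series satisfies $C^m(\liealgebra{g}) = 0$ for some $m$, and since $\phi'$ is a surjective homomorphism we have $C^m(\liealgebra{h}) = \phi'\left(C^m(\liealgebra{g})\right) = 0$, so $\liealgebra{h}$ is nilpotent; equivalently, the quotient $H \cong G/\ker\phi$ of the nilpotent group $G$ is nilpotent by elementary group theory. If $\liealgebra{g}$ is semisimple, decompose it as a direct sum of simple ideals; then $\liealgebra{k}$ is a sum of some of these simple factors and $\liealgebra{h} \cong \liealgebra{g}/\liealgebra{k}$ is isomorphic to the sum of the remaining ones, hence semisimple. (Alternatively, a semisimple Lie algebra has no nonzero abelian ideals and every ideal is complemented by an ideal, so every quotient is again semisimple.)

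The reductive case is the one requiring a little more care. Write $\liealgebra{g} = \liealgebra{z} \oplus \liealgebra{s}$, where $\liealgebra{z} = Z(\liealgebra{g})$ is the abelian center and $\liealgebra{s} = [\liealgebra{g},\liealgebra{g}]$ is semisimple. Set $\overline{\liealgebra{z}} = \phi'(\liealgebra{z})$ and $\overline{\liealgebra{s}} = \phi'(\liealgebra{s})$; these are ideals of $\liealgebra{h}$ with $\liealgebra{h} = \overline{\liealgebra{z}} + \overline{\liealgebra{s}}$, the first abelian and the second semisimple (being a quotient of $\liealgebra{s}$, by the semisimple case). The intersection $\overline{\liealgebra{z}} \cap \overline{\liealgebra{s}}$ is an ideal of $\overline{\liealgebra{s}}$ that is abelian, hence trivial since a semisimple Lie algebra has no nonzero abelian ideals. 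Thus $\liealgebra{h} = \overline{\liealgebra{z}} \oplus \overline{\liealgebra{s}}$ is the direct sum of an abelian ideal and a semisimple ideal, so $\liealgebra{h}$ is reductive.

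The main obstacle is precisely this reductive case: unlike nilpotency and semisimplicity, reductivity is a decomposition statement, and one must verify that the direct-sum structure survives the passage to the quotient rather than merely pushing each summand through $\phi'$. Once the triviality of $\overline{\liealgebra{z}} \cap \overline{\liealgebra{s}}$ is established the decomposition follows, and translating back to groups through the Lie-algebra characterizations completes the proof in all three cases.
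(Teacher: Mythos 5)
Your proposal is correct, and its overall architecture coincides with the paper's proof: both pass to the differential $\phi'$, use its surjectivity, dispose of the nilpotent case immediately, and in the reductive case decompose $\liealgebra{g}$ as center plus semisimple part, push both summands through $\phi'$, and show the images intersect trivially because a semisimple Lie algebra has no nonzero abelian ideals. The one genuine divergence is in how you establish that $\phi'(\liealgebra{s})$ is semisimple: you frame everything as the quotient $\liealgebra{g}/\ker\phi'$ and invoke the standard structure theorem that every ideal of a semisimple Lie algebra is a sum of some of its simple factors, so that any quotient is again semisimple; the paper instead proves this substep from scratch, showing that the image $\phi'(\liealgebra{s}_j)$ of each simple ideal is zero or simple (by pulling a nonzero ideal $I$ back to the ideal $\liealgebra{s}_j \cap \phi'^{-1}(I)$ of $\liealgebra{s}_j$) and that two such images are either equal or meet trivially. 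Your route is shorter, at the price of citing structure theory that the paper effectively reproves in the special case it needs, so the paper's version is the more self-contained of the two; the two framings (quotient by the kernel versus direct images) are otherwise mathematically equivalent. Two small remarks: the lemma as stated does not assume $G$ connected, so your integral-subgroup argument for surjectivity of $\phi'$ tacitly adds that hypothesis --- harmless here, since every application has $G$ connected, and surjectivity of $\phi'$ also follows from openness of surjective homomorphisms between second countable Lie groups; and once $\liealgebra{h} = \phi'(\liealgebra{z}) \oplus \phi'(\liealgebra{s})$ is a direct sum of ideals, the centrality of $\phi'(\liealgebra{z})$, which the paper verifies directly from surjectivity of $\phi'$, comes for free from your weaker ``abelian ideal'' formulation, so nothing is lost there.
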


  \begin{proof}
    The nilpotent case is trivial.
    Let $\liealgebra{g}$ denote the Lie algebra of $G$,
    and $\liealgebra{h}$ be the Lie algebra of $H$.

    If $G$ is reductive,
    then
    \begin{equation*}
      \liealgebra{g}
      =
      \liealgebra{a}
      \oplus
      \liealgebra{s},
    \end{equation*}
    where
    $\liealgebra{a}$ is the center of $\liealgebra{g}$,
    and
    \begin{equation*}
      \liealgebra{s}
      =
      \liealgebra{s}_1
      \oplus
      \cdots
      \oplus
      \liealgebra{s}_n,
    \end{equation*}
    where
    $\liealgebra{s}_j$ are simple ideals of $\liealgebra{g}$.

    \subproof
    {
      The Lie algebra
      $\phi'(\liealgebra{a})$
      is in the center of $\liealgebra{h}$.
    }
    {
      Since $\phi'$ is surjective,
      \begin{equation*}
        [\liealgebra{g}, \phi'(\liealgebra{a})]
        =
        [\phi'(\liealgebra{g}), \phi'(\liealgebra{a})]
        =
        \phi'([\liealgebra{g}, \liealgebra{a}])
        =
        0.
      \end{equation*}
    }

    \subproof
    {
      The Lie algebra
      $\phi'(\liealgebra{s})$
      is semisimple.
    }
    {
      Since
      \begin{equation*}
        \phi'(\liealgebra{s})
        =
        \phi'(\liealgebra{s}_1)
        + \dotsb +
        \phi'(\liealgebra{s}_n),
      \end{equation*}
      we just have to show that
      for each $j = 1, \dotsc, n$,
      $\phi'(\liealgebra{s}_j)$
      is simple;
      and for each $j,k = 1, \dotsc, n$,
      either
      $\phi'(\liealgebra{s}_j) = \phi'(\liealgebra{s}_k)$,
      or
      $\phi'(\liealgebra{s}_j) \cap \phi'(\liealgebra{s}_k) = 0$.

      Let $I \subset \phi'(\liealgebra{s}_j)$
      be a non null ideal.
      Then,
      $J = \liealgebra{s}_j \cap \phi'^{-1}(I)$
      is a non null ideal of $\liealgebra{s}_j$.
      Since $\liealgebra{s}_j$ is simple,
      $J = \liealgebra{s}_j$.
      That is,
      \begin{equation*}
        \phi'(\liealgebra{s}_j)
        =
        \phi'(J)
        \subset
        I
        \subset
        \phi'(\liealgebra{s}_j).
      \end{equation*}
      Therefore,
      $\phi'(\liealgebra{s}_j)$ is simple
      whenever it is non null.

      Now,
      let
      $I = \phi'(\liealgebra{s}_j) \cap \phi'(\liealgebra{s}_k)$.
      Then,
      $I$ is an ideal of both,
      $\phi'(\liealgebra{s}_j)$
      and
      $\phi'(\liealgebra{s}_k)$.
      Therefore,
      either
      $I = 0$
      or
      $\phi'(\liealgebra{s}_j) = \phi'(\liealgebra{s}_k)$.
    }

    Denoting by $\liealgebra{h}$ the Lie algebra of $\phi(G)$,
    we know that
    \begin{equation*}
      \liealgebra{h}
      =
      \phi'(\liealgebra{g})
      =
      \phi'(\liealgebra{a})
      +
      \phi'(\liealgebra{s}).
    \end{equation*}
    But,
    since the intersection of an abelian Lie algebra
    and a semisimple Lie algebra is $0$,
    the sum is in fact
    a direct sum.
    And this proves that $\phi(G)$ is reductive.
    If $G$ is semisimple,
    then $\liealgebra{a} = 0$,
    and therefore,
    $\liealgebra{h} = \phi'(\liealgebra{s})$
    is semisimple.
  \end{proof}

  \begin{proposition}
    Let $G$ be a connected semisimple Lie group,
    and let
    $\function{\phi}{G}{G}$ be a continuous endomorphism.
    Then
    \begin{equation*}
      \topologicalentropy{\phi}
      =
      0.
    \end{equation*}
  \end{proposition}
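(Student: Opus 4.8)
The plan is to reduce the problem to a surjective endomorphism and then exploit the fact that a semisimple group has trivial toral component. First I would invoke Lemma \ref{le:restricao_aa_imagem_da_potencia_do_endomorfismo} to produce a natural number $n$ such that, setting $H = \phi^n(G)$, the restriction $\phi|_H$ is surjective and
\begin{equation*}
  \topologicalentropy{\phi}
  =
  \topologicalentropy{\phi|_H}.
\end{equation*}
This transfers the computation from $\phi$ to a genuinely surjective endomorphism of the connected Lie group $H$, which is exactly the regime in which the auxiliary results are available.

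Next I would identify the structure of $H$. Since $\function{\phi^n}{G}{H}$ is a surjective Lie group endomorphism and $G$ is semisimple, Lemma \ref{le:imagem_de_grupo} guarantees that $H$ is semisimple as well. In particular the center of $H$ is discrete, so the maximal connected and compact subgroup of the center --- the toral component $T(H)$ --- is trivial.

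With $H$ connected semisimple and $\phi|_H$ surjective, I would apply the corresponding result of \cite{patrao_caldas:endomorfismos} for surjective endomorphisms of connected semisimple Lie groups, exactly as quoted in the discussion following Lemma \ref{le:th_tg}, to obtain
\begin{equation*}
  \topologicalentropy{\phi|_H}
  =
  \topologicalentropy{\phi|_{T(H)}}.
\end{equation*}
Because $T(H)$ is trivial, the restriction $\phi|_{T(H)}$ is a map on a one-point group and therefore has null topological entropy, so $\topologicalentropy{\phi|_H} = 0$. Combining this with the first displayed equality yields $\topologicalentropy{\phi} = 0$, which is the assertion.

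The only real obstacle here is bookkeeping across the cited results rather than any new entropy-theoretic difficulty. I must ensure that the theorem of \cite{patrao_caldas:endomorfismos} is applied strictly in the surjective regime under which it was proved, and that $H$ genuinely inherits semisimplicity so that its toral component collapses. Both points are handled precisely by Lemmas \ref{le:restricao_aa_imagem_da_potencia_do_endomorfismo} and \ref{le:imagem_de_grupo}, so once the variational principle (Theorem \ref{th:principio_variacional}) has underwritten the reduction to $H$, no further analytic or measure-theoretic work remains.
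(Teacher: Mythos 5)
Your proposal is correct and follows essentially the same route as the paper: reduce to $H = \phi^n(G)$ via Lemma \ref{le:restricao_aa_imagem_da_potencia_do_endomorfismo}, conclude that $H$ is connected semisimple via Lemma \ref{le:imagem_de_grupo}, and then invoke \cite{patrao_caldas:endomorfismos} for the surjective restriction $\phi|_H$. The only cosmetic difference is the final step: the paper cites Theorem 5.2 of \cite{patrao_caldas:endomorfismos}, which gives $\topologicalentropy{\phi|_H} = 0$ directly for surjective endomorphisms of connected semisimple groups, whereas you pass through the toral-component equality $\topologicalentropy{\phi|_H} = \topologicalentropy{\phi|_{T(H)}}$ and then use the triviality of $T(H)$ --- an equivalent but slightly longer detour.
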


  \begin{proof}
    Consider $H = \phi^n(G)$ given by Lemma
    \ref{le:restricao_aa_imagem_da_potencia_do_endomorfismo}.
    Then,
    Lemma \ref{le:imagem_de_grupo}
    implies that $H$
    is a connected semisimple Lie group,
    and that
    \begin{equation*}
      \topologicalentropy{\phi}
      =
      \topologicalentropy{\phi|_H}
      =
      0,
    \end{equation*}
    where the last equality follows from
    Theorem 5.2 of \cite{patrao_caldas:endomorfismos}.
  \end{proof}

  \begin{proposition}
    \label{proposition:nilpotent:toral_component}
    Let $G$ be a connected linear reductive
    or a connected nilpotent Lie group,
    and let
    $\function{\phi}{G}{G}$ be a continuous endomorphism.
    Then
    \begin{equation*}
      \topologicalentropy{\phi}
      =
      \topologicalentropy{\phi|_{T(G)}}.
    \end{equation*}
    In particular,
    any linear transformation
    $\function{T}{V}{V}$
    over a finite dimensional vector space $V$
    has null topological entropy.
  \end{proposition}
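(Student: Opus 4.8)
The plan is to follow the proof of Proposition \ref{proposition:compact_group} almost verbatim, substituting the compact input from \cite{patrao_caldas:endomorfismos} by its reductive/nilpotent counterparts and closing with a squeezing argument through the toral components. First I would invoke Lemma \ref{le:restricao_aa_imagem_da_potencia_do_endomorfismo} to obtain a natural number $n$ such that, setting $H = \phi^n(G)$, the restriction $\phi|_H$ is surjective and
\[
  \topologicalentropy{\phi} = \topologicalentropy{\phi|_H}.
\]
The next step is to locate $H$ in the correct class. When $G$ is nilpotent, $H$ is the continuous homomorphic image of a connected nilpotent group, hence connected nilpotent. When $G$ is linear reductive, Lemma \ref{le:imagem_de_grupo} shows that $H$ is reductive, and one must additionally verify that $H$ stays linear so that the hypotheses of the borrowed theorem apply.

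With $\phi|_H$ surjective and $H$ in the appropriate class, I would apply the corresponding formula of \cite{patrao_caldas:endomorfismos} to the surjective endomorphism $\phi|_H$, obtaining $\topologicalentropy{\phi|_H} = \topologicalentropy{\phi|_{T(H)}}$. Lemma \ref{le:th_tg} then gives $T(H) \subset T(G)$. Since $T(H)$, $T(G)$ and $G$ are $\phi$-invariant and topological entropy is monotone under passing to invariant subsystems --- which in this locally compact separable metrizable setting follows from Theorem \ref{th:principio_variacional}, because an invariant measure of a subsystem extends by zero to an invariant measure of the ambient system --- I would conclude
\[
  \topologicalentropy{\phi} = \topologicalentropy{\phi|_{T(H)}}
  \leq \topologicalentropy{\phi|_{T(G)}}
  \leq \topologicalentropy{\phi},
\]
forcing equality throughout, which is exactly the first assertion.

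For the particular case, a finite dimensional vector space $V$ with its additive structure is a connected abelian, hence nilpotent, Lie group, and a linear map $\function{T}{V}{V}$ is a continuous endomorphism of it. Its center is all of $V$, and the only connected compact subgroup of $(V,+) \cong \reals^{\dim V}$ is the trivial one, so $T(V) = \set{0}$. Applying the nilpotent case of the proposition then yields
\[
  \topologicalentropy{T} = \topologicalentropy{T|_{T(V)}}
  = \topologicalentropy{T|_{\set{0}}} = 0.
\]

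The main obstacle I anticipate is the bookkeeping in the reductive branch: confirming that $H = \phi^n(G)$ is not merely reductive but \emph{linear} reductive, so that the precise hypotheses of the results imported from \cite{patrao_caldas:endomorfismos} are met, since $\phi^n(G)$ need not be a closed subgroup a priori. Once the class of $H$ is secured, the entropy monotonicity and the squeezing are routine and proceed exactly as in Proposition \ref{proposition:compact_group}.
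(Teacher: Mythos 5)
Your proposal matches the paper's proof essentially step for step: Lemma \ref{le:restricao_aa_imagem_da_potencia_do_endomorfismo} to pass to $H=\phi^n(G)$ with $\phi|_H$ surjective, Lemma \ref{le:imagem_de_grupo} to place $H$ in the right class, Theorem 4.1 (nilpotent) or Corollary 2 (linear reductive) of \cite{patrao_caldas:endomorfismos} applied to $\phi|_H$, Lemma \ref{le:th_tg} plus the squeeze $\topologicalentropy{\phi}=\topologicalentropy{\phi|_{T(H)}}\leq\topologicalentropy{\phi|_{T(G)}}\leq\topologicalentropy{\phi}$, and the same specialization to $(V,+)$ with trivial toral component. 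The one point you flag as an obstacle --- verifying that $H$ remains \emph{linear} in the reductive case --- is in fact glossed over by the paper as well, which simply asserts it from Lemma \ref{le:imagem_de_grupo}, so your caution there is warranted but does not distinguish your route from theirs.
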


  \begin{proof}
    Consider $H = \phi^n(G)$ given by Lemma
    \ref{le:restricao_aa_imagem_da_potencia_do_endomorfismo}.
    Then,
    Lemma \ref{le:imagem_de_grupo}
    implies that $H$
    is a connected linear reductive
    or
    connected nilpotent.

    Using Lemma
    \ref{le:restricao_aa_imagem_da_potencia_do_endomorfismo},
    and applying Theorem 4.1 (for the nilpotent case) of
    \cite{patrao_caldas:endomorfismos}
    or Corollary 2 (for the linear reductive case) of
    \cite{patrao_caldas:endomorfismos}
    for $\phi$ restricted to $H$,
    we get that
    \begin{equation*}
      \topologicalentropy{\phi}
      =
      \topologicalentropy{\phi|_{H}}
      =
      \topologicalentropy{\phi|_{T(H)}}.
    \end{equation*}
    Now,
    Lemma \ref{le:th_tg},
    implies that $T(H) \subset T(G)$,
    and therefore,
    \begin{equation*}
      \topologicalentropy{\phi}
      =
      \topologicalentropy{\phi|_{T(H)}}
      \leq
      \topologicalentropy{\phi|_{T(G)}}
      \leq
      \topologicalentropy{\phi}.
    \end{equation*}

    The last claim follows from the fact that
    $(V, +)$ is an abelian
    (a \emph{fortiori}, nilpotent) Lie group
    with trivial toral component.
  \end{proof}
}
}

%% rubber: bibtex.path ${SRC_DIR}
  \bibliography{bibliografia}
\end{document}